\documentclass[11pt]{amsart}

\usepackage[colorlinks=true,urlcolor=blue, citecolor=red,linkcolor=blue,
linktocpage,pdfpagelabels, bookmarksnumbered,bookmarksopen]{hyperref}
\usepackage[hyperpageref]{backref}
\usepackage{geometry}
\usepackage{amssymb}
\usepackage{graphicx}
\usepackage{verbatim}

\numberwithin{equation}{section}

\newtheorem{teo}{Theorem}[section]
\newtheorem{lm}[teo]{Lemma}
\newtheorem{prop}[teo]{Proposition}
\newtheorem{exa}[teo]{Example}
\theoremstyle{definition}
\newtheorem{defi}[teo]{Definition}
\newtheorem{oss}[teo]{Remark}
\newtheorem{open}{Open problem}
\newtheorem*{ack}{Acknowledgments}

\title[Constrained critical points of Dirichlet integrals]{An overview on constrained critical points\\ of Dirichlet integrals}

\author[Brasco]{Lorenzo Brasco}
\address[L.\ Brasco]{Dipartimento di Matematica e Informatica
\newline\indent
Universit\`a degli Studi di Ferrara
\newline\indent
Via Machiavelli 35, 44121 Ferrara, Italy}
\email{lorenzo.brasco@unife.it}

\author[Franzina]{Giovanni Franzina}
\address[G. Franzina]{Istituto Nazionale di Alta Matematica (INdAM)
\newline\indent
Unit\`a di Ricerca di Firenze c/o DiMaI ``Ulisse Dini'' 
\newline\indent 
Universit\`a degli Studi di Firenze
\newline\indent 
Viale Morgagni 67/A, 50134 Firenze, Italy}
\email{franzina@math.unifi.it}

\subjclass[2010]{35P30, 49R05}
\keywords{Eigenvalues, constrained critical points, Lane-Emden equation.}

\begin{document}

\begin{abstract}
We consider a natural generalization of the eigenvalue problem for the Laplacian with homogeneous Dirichlet boundary conditions. This corresponds to look for the critical values of the Dirichlet integral, constrained to the unit $L^q$ sphere.
We collect some results, present some counter-examples and compile a list of open problems.
\end{abstract}

\maketitle

\begin{center}
\begin{minipage}{8cm}
\small
\tableofcontents
\end{minipage}
\end{center}

\section{Introduction}

\subsection{The spectrum of the Laplacian}
Let us consider an open and bounded set $\Omega\subset\mathbb{R}^N$, with $N\ge 2$. By means of the {\it Spectral Theorem} for positive, compact and self-adjoint operators (see for example \cite[Theorem 1.2.1]{He}), it is a classical fact that the {\it Helmholtz equation} 
\begin{equation}
\label{autovalore}
-\Delta u=\lambda\,u,\ \mbox{ in } \Omega,\qquad u=0, \ \mbox{ on }\partial\Omega,
\end{equation}
admits nontrivial solutions only for a discrete set of values $\lambda>0$, called {\it eigenvalues of the Dirichlet-Laplacian on $\Omega$}. The corresponding nontrivial solution $u$ is called {\it eigenfunction} and the pair $(u,\lambda)$ is usually referred to as {\it eigenpair}.
\par
Here solutions are always intended in weak sense, i.e. the eigenfunctions $u$ must belong to the homogeneous Sobolev space $\mathcal{D}^{1,2}_0(\Omega)$. The latter is defined as the completion of $C^\infty_0(\Omega)$ with respect to the norm
\[
\varphi\mapsto \|\nabla \varphi\|_{L^2(\Omega)}=\left(\int_\Omega |\nabla \varphi|^2\,dx\right)^\frac{1}{2}.
\]
Observe that the space $\mathcal{D}^{1,2}_0(\Omega)$ can be defined for general open sets, not necessarily bounded. Moreover, when the set $\Omega$ supports the Poincar\'e inequality
\[
C_\Omega\,\int_\Omega |\varphi|^2\,dx\le \int_\Omega |\nabla \varphi|^2\,dx,\qquad \mbox{ for every } \varphi\in C^\infty_0(\Omega),
\]
we have that $\mathcal{D}^{1,2}_0(\Omega)$ coincides with the more familiar space $H^1_0(\Omega)$, defined as the closure of $C^\infty_0(\Omega)$ in the standard Sobolev space $H^1(\Omega)$.
\par
By the Lagrange's Multipliers Rule, it is easily shown that each eigenvalue can be seen as a critical value of the 
Dirichlet integral
\[
\varphi\mapsto \int_\Omega |\nabla \varphi|^2\,dx,
\]
constrained to the ``manifold''
\[
\mathcal{S}_2(\Omega)=\Big\{\varphi\in \mathcal{D}^{1,2}_0(\Omega)\, :\, \|\varphi\|_{L^2(\Omega)}=1\Big\}.
\]
The associated eigenfunctions are then the corresponding critical points.
\par 
By using the well-known spectral properties of the Dirichlet-Laplacian (see \cite[Chapter 1]{He}), we can single out the following remarkable properties of these critical values:
\begin{enumerate}
\item[$(\mathcal{E}_1)$]
\label{E1} the Dirichlet integral
\[
\int_\Omega |\nabla \varphi|^2\,dx,
\]
constrained to the unit sphere of $L^2(\Omega)$, only admits a discrete sequence of positive critical values, accumulating to $+\infty$. We indicate it with
\[
\mathrm{Spec}(\Omega)=\{\lambda_1(\Omega),\lambda_2(\Omega),\dots\};
\]
\vskip.2cm
\item[$(\mathcal{E}_2)$] the corresponding critical points give an orthonormal basis of $L^2(\Omega)$;
\vskip.2cm
\item[$(\mathcal{E}_3)$] the constrained problem admits a global minimum, which coincides with the first eigenvalue $\lambda_1(\Omega)$;
\vskip.2cm
\item[$(\mathcal{E}_4)$] if $\Omega$ is connected, then $\lambda_1(\Omega)$ is {\it simple}, i.e. global minimizers on $\mathcal{S}_2(\Omega)$ are unique, up to the choice of the sign. Morever, this is the only critical value with constant-sign eigenfunctions;
\vskip.2cm
\item[$(\mathcal{E}_5)$] if $\Omega$ has $\#$ connected components $\{\Omega_j\}_j$, then 
\[
\bigcup_{j=1}^\#\mathrm{Spec}(\Omega_j)= \mathrm{Spec}(\Omega),
\]
and
\begin{equation}
\label{primino}
\lambda_1(\Omega)=\min_{j} \lambda_1(\Omega_j);
\end{equation}
\vskip.2cm
\item[$(\mathcal{E}_6)$] each critical value has a variational characterization, given for example by the {\it Courant-Fischer-Weyl min-max principle}
\begin{equation}
\label{CFW}
\lambda_k(\Omega)=\min_{\mathcal{F}\subset \Sigma_k(\Omega)} \left\{\max_{\varphi\in \mathcal{F}}\int_\Omega |\nabla \varphi|^2\,dx\right\},\qquad k\in\mathbb{N}\setminus\{0\},
\end{equation}
where 
\[
\Sigma_k(\Omega)=\Big\{\mathcal{F}=F\cap\mathcal{S}_2(\Omega)\,:\, F \mbox{ $m$-dimensional subspace of } \mathcal{D}^{1,2}_0(\Omega)\, :\, m\ge k\Big\}.
\]
\end{enumerate}

\subsection{The $q-$spectrum of the Laplacian} We now try to revert the point of view and adopt directly the one of Critical Point Theory. Then we ask the following simple question: 
\vskip.2cm
{\it what can be said about the critical values of the Dirichlet integral, constrained to the unit sphere
\[
\mathcal{S}_{q}(\Omega)=\Big\{u\in \mathcal{D}^{1,2}_0(\Omega)\, :\, \|u\|_{L^q(\Omega)}=1\Big\},
\]
with $q\not =2$?}
\begin{oss}
In this paper, we always consider the case $q>1$ and $q<2^*$, where the latter is the critical Sobolev exponent. The cases $q=1$ and $q\ge 2^*$ are certainly interesting (in the first case, the notion of critical value should be carefully adapted), but they present additional difficulties and they will not be considered here.
\end{oss}
We point out that switching from $L^2$ to $L^q$ completely destroys the Hilbertian structure of the problem. Thus, we can not expect to obtain a linear eigenvalue--type equation, nor to apply the standard tools of Spectral Theory to answer the question above.
\par
More precisely, by the Lagrange's Multipliers Rule, we see that in this new setting the critical values $\lambda$ are those numbers for which the {\it Lane-Emden equation}
\begin{equation}
\label{autosalone}
-\Delta u=\lambda\, |u|^{q-2}\,u,\ \mbox{ in } \Omega,\qquad u=0, \ \mbox{ on }\partial\Omega,
\end{equation}
admits nontrivial solutions. We point out that equation \eqref{autosalone} has to be coupled with the normalization $u\in\mathcal{S}_q(\Omega)$. If one wants to get rid of this normalization, the correct version of this eigenvalue equation is\footnote{This corresponds to look at nontrivial critical points of the {\it Rayleigh--type quotient}
\[
\varphi\mapsto\frac{\displaystyle \int_\Omega |\nabla \varphi|^2\,dx}{\displaystyle\left(\int_\Omega |\varphi|^q\,dx\right)^\frac{2}{q}}.
\]} 
\begin{equation}
\label{autosalonebis}
-\Delta u=\lambda\,\|u\|_{L^q(\Omega)}^{2-q}\, |u|^{q-2}\,u,\ \mbox{ in } \Omega,\qquad u=0, \ \mbox{ on }\partial\Omega.
\end{equation}
Observe that the right-hand side is mildly nonlocal, due to the presence of the $L^q$ norm. 
\par
We can define the {\it $q-$spectrum of the Dirichlet-Laplacian on $\Omega$} as
\[
\mathrm{Spec}(\Omega;q)=\Big\{\lambda\in \mathbb{R}\, :\, \mbox{equation \eqref{autosalonebis} admits a solution in } \mathcal{D}^{1,2}_0(\Omega)\setminus\{0\}\Big\}.
\]
Accordingly, we call any element of this set a {\it $q-$eigenvalue of $\Omega$}. A corresponding solution $u$ will be called {\it $q-$eigenfunction} and the pair $(u,\lambda)$ will be referred to as {\it $q-$eigenpair}.
\begin{oss}[Unconstrained critical points] It is useful to keep in mind that the eigenvalue problem considered in this paper is equivalent to the problem of finding critical points of the ``free'' functional 
\[
\mathfrak{F}_q(\varphi)=\frac{1}{2}\,\int_\Omega |\nabla \varphi|^2\,dx-\frac{1}{q}\,\int_\Omega |\varphi|^q\,dx,\qquad \varphi\in\mathcal{D}^{1,2}_0(\Omega).
\]
Thus the study performed in this paper is connected to the problem of studying and classifying solutions of the Lane-Emden equation
\[
-\Delta u=|u|^{q-2}\,u,\quad \mbox{ in }\Omega,\qquad u=0,\quad \mbox{ on }\partial\Omega.
\]
We refer to Proposition \ref{prop:free} below, for more details.
\end{oss}
Very little is known on the precise structure of $\mathrm{Spec}(\Omega;q)$. A basic result assures that this is a {\it closed set}, see \cite[Theorem 5.1]{FL}. Moreover, it is unbounded, as it contains a sequence of $q-$eigenvalues diverging to $+\infty$.
Such a sequence is constructed by mimicking the variational characterization \eqref{CFW}. Namely, for every $k\in\mathbb{N}\setminus\{0\}$ one can define
\begin{equation}
\label{CFWLS}
\lambda_{k,LS}(\Omega;q)=\inf_{\mathcal{F}\in \Sigma_k(\Omega;q)}\left\{\max_{\varphi\in\mathcal{F}} \int_\Omega |\nabla\varphi|^2\,dx\right\},
\end{equation}
where 
\[
\Sigma_k(\Omega;q)=\Big\{\mathcal{F}\subset \mathcal{S}_q(\Omega)\, :\, \mathcal{F} \mbox{ compact and symmetric with } \gamma(\mathcal{F})\ge k\Big\},
\]
and $\gamma$ is the {\it Krasnosel'ski\u{\i} genus}, defined by
\[
\gamma(\mathcal{F})=\inf\Big\{k\in\mathbb{N}\setminus\{0\}\, :\, \exists \mbox{ a continuous odd map } \phi:\mathcal{F}\to\mathbb{S}^{k-1}\Big\}.
\] 
Then one has (see \cite[Theorem 5.2]{FL}) 
\[
\mathrm{Spec}_{LS}(\Omega;q):=\{\lambda_{k,LS}(\Omega;q)\}_{k\in\mathbb{N}\setminus\{0\}}\subset\mathrm{Spec}(\Omega;q)\qquad \mbox{ and }\qquad \lim_{k\to \infty}\lambda_{k,LS}(\Omega;q)=+\infty.
\]
The set $\mathrm{Spec}_{LS}(\Omega;q)$ is called {\it Lusternik-Schnirelman $q-$spectrum of the Dirichlet-Laplacian on $\Omega$}.
\par

\subsection{So similar, yet so different!} 
By anticipating some of the conclusions of the paper, we now summarize some peculiar properties of the $q-$spectrum of the Dirichlet-Laplacian of an open set. In particular, we analyze to which extent properties $(\mathcal{E}_1)-(\mathcal{E}_6)$ are still valid for $q\not =2$:
\begin{itemize}
\item[$(\mathcal{E}_{1,q})$] in general, property $(\mathcal{E}_1)$ fails to be true for $1<q<2$, i.e. we can construct an open set $\Omega$ such that $\mathrm{Spec}(\Omega;q)$ is not discrete and has countably many accumulation points (see Example \ref{exa:1}). On the contrary, for $2<q<2^*$, this is an open problem;
\vskip.2cm
\item[$(\mathcal{E}_{2,q})$] essentially nothing is known on the counterpart of $(\mathcal{E}_2)$;
\vskip.2cm
\item[$(\mathcal{E}_{3,q})$] property $(\mathcal{E}_3)$ is still true for $q\not =2$ (see Subsection \ref{sec:first}). However, differently from the case $q=2$, for $1<q<2$ it may happen that the first $q-$eigenvalue is not isolated in the spectrum, i.e. it is an accumulation point of elements of $\mathrm{Spec}(\Omega;q)$ (see Example \ref{exa:2}). For $2<q<2^*$, it is not known whether the first $q-$eigenvalue is isolated or not;
\vskip.2cm
\item[$(\mathcal{E}_{4,q})$] property $(\mathcal{E}_4)$ is still true for $1<q<2$ (see Theorem \ref{teo:simplicity}), but it may fail for $2<q<2^*$ (see Example \ref{exa:dancer}). It is interesting to notice that the set of Example \ref{exa:dancer} has a trivial topology (actually, it is starshaped); 
\vskip.2cm
\item[$(\mathcal{E}_{5,q})$] property $(\mathcal{E}_5)$ fails for $q\not =2$ (see Remark \ref{oss:pezzi}). However, for $2<q<2^*$ the identity \eqref{primino} is still true.
On the contrary, the latter is false for $1<q<2$ (see Example \ref{exa:2});
\vskip.2cm
\item[$(\mathcal{E}_{6,q})$] property $(\mathcal{E}_6)$ fails for $1<q<2$, in the sense that one can exhibit a set for which
\[
\mathrm{Spec}_{LS}(\Omega;q)\not=\mathrm{Spec}(\Omega;q),
\]
(see Example \ref{exa:1}). Here the role of the Krasnosel'ski\u{\i} genus is immaterial, in the sense that the same counter-example still works if we replace the Krasnosel'ski\u{\i} genus with any other index (i.e. {\it $\mathbb{Z}_2-$cohomological index} or {\it Lusternik-Schnirelman category}, just to name a few). We refer the reader to \cite[Chapter II, Section 5]{St} for index theories.
\par
For $2<q<2^*$, this is an open problem.
\end{itemize}
\begin{oss}[Back to $q=2$]
It is useful to keep in mind that for $q=2$, it can be shown that the Lusternik-Schnirelman spectrum coincides with the whole spectrum of the Dirichlet-Laplacian, see for example \cite[Theorem A.2]{BPS}. 
\end{oss}
\begin{oss}[One-dimensional case] Up to now, the whole discussion has concerned the case of dimension $N\ge 2$. In the one-dimensional case, if we take $\Omega=(a,b)\subset\mathbb{R}$, then all the interesting phenomena highlighted above disappear. In particular, by \cite[Theorem II]{Ot} we have that $\mathrm{Spec}((a,b);q)$ is discrete and by \cite[Theorem 4.1]{DM}
\[
\mathrm{Spec}((a,b);q)=\mathrm{Spec}_{LS}((a,b);q),
\]
see also \cite[Theorem 5.3]{FL}. However, even in this case disconnected sets may give weird phenomena, see Remark \ref{oss:dopoesempi} below.

\end{oss}

\subsection{Style of the paper} Where possible, we tried to present proofs which are based on variational principles, rather than on the linearity of the Laplace operator. Also, we tried to keep at a minimal level the regularity assumptions on the sets and the use of regularity for eigenfunctions. For these reasons, many of the results and techniques presented in this paper can be easily generalized to the case of the $p-$Laplacian. This corresponds to replace the Dirichlet integral with the {\it $p-$Dirichlet integral}, i.e.
\[
\varphi\mapsto \int_\Omega |\nabla \varphi|^p\,dx,\qquad \mbox{ for } 1<p<\infty. 
\]
In this case, the equation \eqref{autosalone} must be replaced by its quasilinear version
\[
-\Delta_p u=\lambda\, |u|^{q-2}\,u,\ \mbox{ in } \Omega,\qquad \mbox{ where } -\Delta_p u=-\mathrm{div}(|\nabla u|^{p-2}\,\nabla u).
\]
This eigenvalue--type equation has been introduced in \cite{Ot}.
However, in this case, all the proofs that use a linearization of the equation (see for example Proposition \ref{prop:lin} and Theorem \ref{teo:lin}) should be handled with care and the extension of the relevant results to the $p-$Laplacian are not so straightforward. Some results can be found in \cite{FL}.
\par
In this paper, we preferred to stick to the case of the Laplacian, which is already rich of weird and interesting phenomena...and of open problems, as well.
\par
Finally, {\it \c{c}a va sans dire}, we do not claim that the present work is complete or exhaustive. This paper only reflects the authors' mathematical taste and their knowledge on the problem under consideration

\subsection{Plan of the paper}
In Section \ref{sec:2} we collect some definitions and basic facts. The core of the paper is represented by Sections \ref{sec:3} and \ref{sec:4}, where we separately present our eigenvalue problem, for $1<q<2$ and $q>2$. Both sections have the same structure: we first present the known results, discuss a handful of counter-examples which highlight the main differences with the case $q=2$ and list some open problems. A pair of appendices complement the paper and contribute to make it self-contained.

\begin{ack}
This paper evolved from a set of notes for a talk delivered by the first author at the workshop ``{\it Nonlinear Meeting in Turin 2019\,}''. The organizers Alberto Boscaggin, Francesca Colasuonno and Guglielmo Feltrin are kindly acknowledged. 
\end{ack}

\section{Preliminaries}
\label{sec:2}

\subsection{Notation}
We will indicate by $B_R(x_0)$ the $N-$dimensional open ball with radius $R>0$, centered at $x_0\in\mathbb{R}^N$. When the center is the origin, we will simply write $B_R$.
\par
We define the critical Sobolev exponent
\[
2^*=\left\{\begin{array}{rl}
+\infty, & \mbox{ if } N=2,\\
&\\
\dfrac{2\,N}{N-2}, & \mbox{ if } N\ge 3.
\end{array}
\right.
\]
Occasionally, we will use the celebrated {\it Sobolev inequality} for $\varphi\in \mathcal{D}^{1,2}_0(\Omega)$, i.e.
\begin{equation}
\label{sorbole}
\mathcal{T}_N\,\left(\int_\Omega |\varphi|^{2^*}\,dx\right)^\frac{2}{2^*}\le \int_\Omega |\nabla \varphi|^2\,dx,\qquad \mbox{ for }N\ge 3.
\end{equation}
In dimension $N=2$, the previous inequality does not hold. In this case, we will use the {\it Gagliardo-Nirenberg interpolation inequality}
\begin{equation}
\label{GNS}
\mathcal{T}_{q,\gamma}\,\left(\int_\Omega |\varphi|^\gamma\,dx\right)^\frac{2}{\gamma}\le \left(\int_\Omega |\nabla \varphi|^2\,dx\right)^\frac{\gamma-q}{\gamma}\,\left(\int_\Omega |\varphi|^q\,dx\right)^\frac{2}{\gamma},\qquad \mbox{ for } \gamma>q>1.
\end{equation}

\subsection{Sets, monotonicity and scalings}

\begin{defi}
Let $\Omega\subset\mathbb{R}^N$ be an open set and $1<q<2^*$. We say that $\Omega$ is {\it $q-$admissible} if the embedding $\mathcal{D}^{1,2}_0(\Omega)\hookrightarrow L^q(\Omega)$ is compact.
\end{defi}
Under the condition of $q-$admissibility, one can still produce the Lusternik-Schnirelman $q-$spectrum of the Dirichlet-Laplacian on $\Omega$. Indeed, the existence of this sequence is based on the validity of the so-called {\it Palais-Smale condition} (see \cite[Chpater II, Section 2]{St}), which is assured by the compactness of the embedding $\mathcal{D}^{1,2}_0(\Omega)\hookrightarrow L^q(\Omega)$.
\begin{oss}
\label{oss:qadmissible}
It is well-known that if $\Omega$ has finite $N-$dimensional Lebesgue measure, then it is $q-$admissible for every $1<q<2^*$. However, a set may be $q-$admissible for some $q$, even if its measure is infinite (see \cite[Example 15.5.3]{maz} and \cite[Example 5.2]{braruf} for some examples).
\par
More generally, it is useful to keep in mind the following facts:
\begin{itemize}
\item for $1<q<2$
\[
\mathcal{D}^{1,2}_0(\Omega)\hookrightarrow L^q(\Omega) \mbox{ is continuous }\qquad \Longleftrightarrow\qquad \mathcal{D}^{1,2}_0(\Omega)\hookrightarrow L^q(\Omega) \mbox{ is compact},
\]
see \cite[Theorem 15.6.2]{maz} and \cite[Theorem 1.2]{braruf};
\vskip.2cm
\item for $2<q<2^*$
\[
\mathcal{D}^{1,2}_0(\Omega)\hookrightarrow L^q(\Omega) \mbox{ is compact }\qquad \Longleftrightarrow\qquad \mathcal{D}^{1,2}_0(\Omega)\hookrightarrow L^2(\Omega) \mbox{ is compact},
\]
see \cite[Theorem 15.6.1]{maz}.
\end{itemize}
\end{oss}
By using that for $\Omega'\subset\Omega$ we have $\mathcal{D}^{1,2}_0(\Omega')\subset \mathcal{D}^{1,2}_0(\Omega)$, it is easily seen that
\begin{equation}
\label{mono}
\lambda_{k,LS}(\Omega;q)\le \lambda_{k,LS}(\Omega';q),\qquad \mbox{ for } 1<q<2^* \mbox{ and } k\in\mathbb{N}\setminus\{0\}.
\end{equation}
Moreover, by using the scaling properties of the equation \eqref{autosalone}, we have that if $\lambda\in\mathrm{Spec}(\Omega;q)$, then
\[
t^{N-2-\frac{2}{q}\,N}\,\lambda\in\mathrm{Spec}(t\,\Omega;q),
\]
for every $t>0$.
\subsection{The first $q-$eigenvalue} 
\label{sec:first} By using the definitions of $\lambda_{1,LS}(\Omega;q)$ and of Krasnosel'ski\u{\i} genus, it is easy to see that
\[
\lambda_{1,LS}(\Omega;q)=\min\left\{\int_\Omega |\nabla \varphi|^2\,dx\, :\, \varphi\in \mathcal{S}_q(\Omega)\right\}.
\]
Thus $\lambda_{1,LS}(\Omega;q)$ is the sharp constant for the Poincar\'e--Sobolev inequality
\begin{equation}
\label{poincarello}
\lambda_1(\Omega;q)\,\left(\int_\Omega |\varphi|^q\,dx\right)^\frac{2}{q}\le \int_\Omega |\nabla \varphi|^2\,dx.
\end{equation} 
On the other hand, if $(u,\lambda)$ is a $q-$eigenpair, by testing the weak formulation of \eqref{autosalonebis} with $u$ itself, one obtains
\[
\int_\Omega |\nabla u|^2\,dx=\lambda\,\left(\int_\Omega |u|^q\,dx\right)^\frac{2}{q}.
\]
By recalling \eqref{poincarello}, one then gets
\[
\lambda_{1,LS}(\Omega;q)\le \lambda,\qquad \mbox{ for every } \lambda\in\mathrm{Spec}(\Omega;q).
\]
Thus $\lambda_{1,LS}(\Omega;q)$ is really the first eigenvalue of our eigenvalue problem. For this reason, from now on, when referring to this value we will drop
the uncomfortable subscript $LS$ and simply write $\lambda_1(\Omega;q)$. 
\begin{prop}
\label{lm:constantsign}
Let $1<q<2^*$ and let $\Omega\subset\mathbb{R}^N$ be a $q-$admissible open set. Then any first $q-$eigenfunction of the Dirichlet-Laplacian must have constant sign. 
\end{prop}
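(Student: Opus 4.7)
The plan is to combine the variational characterization of $\lambda_1(\Omega;q)$ as the sharp constant in \eqref{poincarello} with the strong maximum principle applied to $|u|$. The first key observation is that if $u$ is a first $q-$eigenfunction, then so is $|u|$: by the standard chain rule in Sobolev spaces one has $|u|\in\mathcal{D}^{1,2}_0(\Omega)$ with $|\nabla|u||=|\nabla u|$ almost everywhere, whence
\[
\int_\Omega |\nabla|u||^2\,dx=\int_\Omega |\nabla u|^2\,dx=\lambda_1(\Omega;q)\qquad\text{and}\qquad \||u|\|_{L^q(\Omega)}=\|u\|_{L^q(\Omega)}=1.
\]
Thus $v:=|u|$ is again a minimizer of the Dirichlet integral on $\mathcal{S}_q(\Omega)$, i.e.\ a first $q-$eigenfunction itself.

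Writing equation \eqref{autosalonebis} for $v$ one obtains
\[
-\Delta v=\lambda_1(\Omega;q)\,v^{q-1}\qquad\text{weakly in }\Omega,
\]
with a nonnegative right-hand side. Hence $v\ge 0$ is a weak supersolution of $-\Delta w=0$ in $\Omega$. I would then run a regularity/strong maximum principle package: elliptic bootstrap (a Moser iteration, made available by the subcritical exponent $q<2^*$) yields $v\in L^{\infty}_{\mathrm{loc}}(\Omega)\cap C(\Omega)$, and the strong maximum principle for nonnegative superharmonic functions then forces, on each connected component $\Omega_j$ of $\Omega$, either $v\equiv 0$ on $\Omega_j$ or $v>0$ throughout $\Omega_j$. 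On a component of the second type the continuity of $u$ together with $|u|>0$ prevents $u$ from vanishing, so by connectedness $u$ keeps constant sign on $\Omega_j$.

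The main obstacle in carrying out this plan is the regularity step: one needs to verify that $v^{q-1}$ sits in a Lebesgue class sufficient to trigger a De Giorgi--Nash--Moser argument producing local boundedness and continuity of $v$, without any regularity assumption on $\partial\Omega$. Once this is granted, the strong maximum principle, in the form of the Harnack inequality for nonnegative weak supersolutions of $-\Delta w\ge 0$, does the rest. A minor, expository subtlety is that when $\Omega$ is disconnected the above argument only yields constant sign on each connected component; the conclusion ``$u$ has constant sign'' is thus to be read in this component-wise sense, modulo the freedom of independently changing the sign of $u$ on each component where it does not vanish.
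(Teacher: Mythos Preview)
Your approach is correct and close in spirit to the paper's, but the paper takes a slightly sharper shortcut that you may find instructive. Both proofs begin identically: $|u|$ is again a minimizer, hence a first $q$-eigenfunction. From here you apply the strong maximum principle to $|u|$ to get $|u|>0$ on each component, and then invoke \emph{continuity of $u$} (obtained via a Moser/De Giorgi step) together with the intermediate value theorem to conclude constant sign. The paper instead observes that since both $u$ and $|u|$ solve the Euler--Lagrange equation, their average $u_+=(u+|u|)/2$ solves
\[
-\Delta u_+=\lambda_1(\Omega;q)\,u_+^{q-1}\ge 0,
\]
so $u_+$ itself is weakly superharmonic; if $u_-\not\equiv 0$ then $u_+$ vanishes on a set of positive measure, contradicting the minimum principle directly.

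The practical difference is that the paper's argument applies the minimum principle to $u_+$ rather than to $|u|$, which dispenses with the need to prove interior continuity of $u$: the weak Harnack inequality for nonnegative supersolutions already forces $u_+\equiv 0$ on any component where it vanishes on a positive measure set, with no pointwise regularity required. Your route is perfectly valid, but you pay the price of the extra regularity step you flagged as the ``main obstacle''; the paper's trick of summing the two equations sidesteps it entirely.
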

\begin{proof}
Let us suppose that $u\in\mathcal{D}^{1,2}_0(\Omega)$ is a first $q-$eigenfunction, such that both $u_+$ and $u_-$ are nontrivial. Here $u_+$ and $u_-$ are the positive and negative parts, respectively. By observing that $|u|\in\mathcal{D}^{1,2}_0(\Omega)$ and that
\[
\int_\Omega |\nabla |u||^2\,dx=\int_\Omega |\nabla u|^2\,dx,\qquad \int_\Omega \Big||u|\Big|^q\,dx=\int_\Omega |u|^q\,dx,
\]
we get that $|u|$ is still a first $q-$eigenfunction. We suppose for simplicity that 
\[
\int_\Omega |u|^q\,dx=1,
\]
thus by minimality, $u$ and $|u|$ solve
\[
\int_\Omega \langle \nabla u,\nabla \varphi\rangle\,dx=\lambda_1(\Omega;q)\,\int_\Omega |u|^{q-2}\,u\,\varphi\,dx,\qquad \mbox{ for every } \varphi\in\mathcal{D}^{1,2}_0(\Omega), 
\]
and
\[
\int_\Omega \langle \nabla |u|,\nabla \varphi\rangle\,dx=\lambda_1(\Omega;q)\,\int_\Omega |u|^{q-1}\,\varphi\,dx,\qquad \mbox{ for every } \varphi\in\mathcal{D}^{1,2}_0(\Omega).
\]
We now observe that $u_+=(|u|+u)/2$, thus by summing the previous equations we get
\[
\begin{split}
\int_\Omega \langle \nabla u_+,\nabla\varphi\rangle\,dx&=\lambda_1(\Omega;q)\,\int_\Omega \left(\frac{|u|^{q-2}\,u+|u|^{q-1}}{2}\right)\,\varphi\,dx\\
&=\lambda_1(\Omega;q)\,\int_\Omega |u|^{q-2}\,\frac{|u|+u}{2}\,\varphi\,dx\\
&=\lambda_1(\Omega;q)\,\int_\Omega |u|^{q-2}\,u_+\,\varphi\,dx,\qquad \mbox{ for every } \varphi\in\mathcal{D}^{1,2}_0(\Omega).
\end{split}
\]
By observing that $|u|^{q-2}\,u_+=u_+^{q-1}$, we get from the previous computation that $u_+$ is a non-negative weak solution of
\[
-\Delta u_+=\lambda_1(\Omega;q)\,u_+^{q-1},\qquad \mbox{ in }\Omega.
\]
In particular, it is a weakly superharmonic function in $\Omega$. On the other hand, the function $u_+$ has to vanish on a set of positive measure, since we are assuming that both $u_+$ and $u_-$ are nontrivial. We now get a contradiction with the minimum principle.
\end{proof}

\begin{oss}
For the case $2<q<2^*$, there is an even simpler proof of the previous fact.
Let us suppose that $u\in\mathcal{D}^{1,2}_0(\Omega)$ is a first $q-$eigenfunction, such that both $u_+$ and $u_-$ are nontrivial. We can assume without loss of generality that
\[
\|u\|_{L^q(\Omega)}=1.
\]
By testing the equation with $u_+$, we obtain
\[
\int_\Omega |\nabla u_+|^2\,dx=\lambda_1(\Omega;q)\,\int_\Omega (u_+)^q\,dx.
\]
Thanks to the normalization taken, we observe that for $q>2$ we have 
\[
\left(\int_\Omega (u_+)^q\,dx\right)^\frac{2}{q}> \int_\Omega (u_+)^q\,dx.
\]
Here we used that $t<t^\alpha$, for $0<t<1$ and $0<\alpha<1$.
This in turn implies that
\[
\frac{\displaystyle\int_\Omega |\nabla u_+|^2\,dx}{\displaystyle\left(\int_\Omega (u_+)^q\,dx\right)^\frac{2}{q}}<\lambda_1(\Omega;q).
\]
This violates the minimality of the value $\lambda_1(\Omega;q)$. This proof does not work for $1<q< 2$.
\end{oss}
\subsection{Miscellaneous stuff}
The following mild regularity result is certainly well-known, this can be found for example in \cite[Theorem 2.2]{FL}. The main focus is on the precise scale-invariant estimate. Observe that the constant entering in the estimate does not depend on the measure $|\Omega|$ of the set. For this reason, we can consider open sets with minimal assumptions. We provide a proof based on the {\it Moser's iteration technique}.
\begin{prop}
\label{prop:Linfty}
Let $1<q<2^*$ and let $\Omega\subset\mathbb{R}^N$ be a $q-$admissible open set. If $U\in\mathcal{D}^{1,2}_0(\Omega)$ is a $q-$eigenfunction with eigenvalue $\lambda$, then $U\in L^\infty(\Omega)$. Moreover, we have the estimate:
\begin{itemize}
\item if $N\ge 3$
\[
\|U\|_{L^\infty(\Omega)}\le C_{N,q}\,\Big(\sqrt{\lambda}\Big)^\frac{2^*}{2^*-q}\,\|U\|_{L^q(\Omega)};
\] 
\item if $N=2$
\[
\|U\|_{L^\infty(\Omega)}\le \left\{\begin{array}{lr}
C_{q}\,\sqrt{\lambda}\,\|U\|_{L^q(\Omega)},& \mbox{ if } 2\le q<2^*,\\
&\\
C_q\,\sqrt{\dfrac{\lambda}{\lambda_1(\Omega;q)}}\,\sqrt{\lambda}\,\|U\|_{L^q(\Omega)},& \mbox{ if } 1<q<2.
\end{array}
\right.
\] 
\end{itemize}
\end{prop}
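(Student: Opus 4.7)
The approach is the classical Moser iteration, with careful bookkeeping of constants to preserve the scale-invariant exponent on $\lambda$. Since \eqref{autosalonebis} is homogeneous of degree one in $U$, I may replace $U$ by $U/\|U\|_{L^q(\Omega)}$ and assume $\|U\|_{L^q(\Omega)}=1$, so that $U$ solves $-\Delta U=\lambda\,|U|^{q-2}U$ weakly; the general bound will follow by homogeneity. For each $\beta\ge 1$ I would test the weak formulation with $|U|^{2(\beta-1)}U$ (rigorously with the truncation $\min\{|U|,k\}^{2(\beta-1)}U$ and $k\to\infty$ by monotone convergence). Combined with the chain rule identity $|\nabla(|U|^\beta)|^2=\beta^2|U|^{2(\beta-1)}|\nabla U|^2$, this yields the fundamental identity
\[
\int_\Omega|\nabla(|U|^\beta)|^2\,dx=\frac{\beta^2}{2\beta-1}\,\lambda\int_\Omega|U|^{2\beta+q-2}\,dx,
\]
which is the only step that invokes the eigenvalue equation.

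For $N\ge 3$, pairing this identity with Sobolev's inequality \eqref{sorbole} applied to $|U|^\beta$ gives
\[
\mathcal{T}_N\,\|U\|_{L^{2^*\beta}(\Omega)}^{2\beta}\le\frac{\beta^2\,\lambda}{2\beta-1}\,\|U\|_{L^{2\beta+q-2}(\Omega)}^{2\beta+q-2}.
\]
Setting $\beta_0=1$ and defining $\beta_{n+1}$ by $2\beta_{n+1}+q-2=2^*\beta_n$ produces a linear recursion with explicit solution $\beta_n=\frac{2^*-q}{2^*-2}\bigl(2^*/2\bigr)^n+\frac{q-2}{2^*-2}$, so $\beta_n$ grows geometrically. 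Raising each inequality to the power $1/(2\beta_n)$ and composing them telescopically, the accumulated exponent of $\lambda$ is the geometric sum
\[
\sum_{k\ge 0}\frac{1}{2\beta_k}\prod_{j>k}\frac{2\beta_j+q-2}{2\beta_j}=\frac{2^*}{2(2^*-q)},
\]
while the numerical constants $\bigl(\beta_k^2/((2\beta_k-1)\mathcal{T}_N)\bigr)^{1/(2\beta_k)}$ multiply to a finite $C_{N,q}$ thanks to $\sum\log\beta_k/\beta_k<\infty$. Letting $n\to\infty$ (so that $\|U\|_{L^{2^*\beta_n}}\to\|U\|_{L^\infty(\Omega)}$) and restoring the normalization yields the stated inequality. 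For $N=2$ the same scheme works with \eqref{GNS} replacing \eqref{sorbole}; since $2^*$ is effectively $+\infty$ in dimension two, the accumulated exponent on $\lambda$ collapses to $1/2$, giving $\|U\|_{L^\infty}\le C_q\sqrt{\lambda}\,\|U\|_{L^q}$ when $q\ge 2$. When $1<q<2$ the iteration based directly on \eqref{GNS} with $L^q$ at the base does not close in a scale-invariant way, and one first uses the Poincar\'e--Sobolev inequality \eqref{poincarello} to trade $\|U\|_{L^q}$ control for $\|U\|_{L^2}$ control, at the cost of a factor $1/\sqrt{\lambda_1(\Omega;q)}$; this produces exactly the additional $\sqrt{\lambda/\lambda_1(\Omega;q)}$ in the statement.

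The main obstacle is preserving the scale-invariant form of the estimate: one must use inequalities with constants depending only on $N$ and $q$, not on $|\Omega|$, which is precisely why the two-dimensional subquadratic case forces the detour through \eqref{poincarello}. The remaining technical points---summing the geometric series to recover the exponent $2^*/(2(2^*-q))$, showing that the telescoping Moser constants stay bounded, and justifying the test function $|U|^{2(\beta-1)}U$ in $\mathcal{D}^{1,2}_0(\Omega)$ by a truncation and monotone convergence argument---are standard.
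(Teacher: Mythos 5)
Your proposal follows the same general strategy as the paper: Moser iteration starting from the energy identity obtained by testing the equation with a power of the solution, with the eigenvalue entering only through that identity. The small formal differences in the test function ($|U|^{2(\beta-1)}U$ versus the paper's $U^\beta$ after reducing to $U>0$) are immaterial.

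The one genuinely different choice you make is to use a single recursion $2\beta_{n+1}+q-2=2^*\beta_n$ for \emph{all} $q\in(1,2^*)$ when $N\ge 3$. The paper distinguishes: for $q\ge 2$ it first applies H\"older to write $\int U^{q-1+\beta}\le\|U\|_{L^q}^{q-2}(\int U^{\vartheta q})^{2/q}$, which yields the simpler geometric recursion $\vartheta_{i+1}=(2^*/q)\vartheta_i$; for $1<q<2$ it iterates without H\"older, and your recursion is exactly that one (under the change of variable $\mu_n=2\beta_n-1$, i.e.\ the exponent in the test function). Your unified route is a valid and arguably cleaner simplification, and your telescoping computation $\sum_{k\ge 0}\frac{1}{2\beta_k}\prod_{j>k}\frac{2\beta_j+q-2}{2\beta_j}=\frac{2^*}{2(2^*-q)}$ checks out (it reduces to a geometric sum in $2/2^*$ after telescoping $\beta_k/\beta_n$).

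Where you should be more careful is the case $N=2$, $q\ge 2$. You assert that ``the same scheme works with \eqref{GNS} replacing \eqref{sorbole}'' and the exponent ``collapses to $1/2$'', but this does not follow by simply substituting $\gamma$ for $2^*$ in your unified recursion: the Gagliardo--Nirenberg inequality contributes a \emph{product} of $\|\nabla U^\beta\|_{L^2}$ and $\|U^\beta\|_{L^q}$ on the right, so after inserting the energy identity you are left with two different integrals, $\int U^{2\beta+q-2}$ and $\int U^{q\beta}$, and the iteration does not close in that form. For $q\ge 2$ you must still interpose the H\"older step $\int U^{2\beta+q-2}\le\|U\|_{L^q}^{q-2}\bigl(\int U^{q\beta}\bigr)^{2/q}$ to identify the two base norms — this is exactly what the paper does with $\gamma=2q$, giving the doubling recursion $\vartheta_{i+1}=2\vartheta_i$ and exponent $\sum_i\frac{1}{4\cdot 2^i}=\tfrac12$. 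Without it, the only way to close would be the Poincar\'e--Sobolev substitution you reserve for $1<q<2$, and that would give the weaker bound $\lambda/\sqrt{\lambda_1(\Omega;q)}$ rather than $\sqrt{\lambda}$. So your treatment of $N=2$, $1<q<2$ via \eqref{poincarello} is correct (and identical to the paper's), but $N=2$, $q\ge 2$ needs the missing H\"older step spelled out.
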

\begin{proof}
We can assume without loss of generality that $U$ is positive. For simplicity, we set
\[
\Lambda=\lambda\,\|U\|_{L^q(\Omega)}^{2-q}.
\]
Since this result is quite standard, we assume that $U$ is already in $L^\infty(\Omega)$ and just focus on obtaining the claimed a priori estimate. The complete result would follow just by replacing the test function $U^\beta$ below, with $\min\{U,M\}^\beta$ for $M>0$ and then letting $M$ goes to $+\infty$. We leave the details to the reader.
\par
We find it useful to distinguish the cases $q\ge 2$ and $1<q<2$. Indeed, even if the idea of the proof is the same, some computations are different. Moreover, the cases $N\ge 3$ and $N=2$ will need a different treatment, as usual. 
\vskip.2cm\noindent
{\bf Case $2\le q<2^*$}. We test the equation with $\varphi=U^\beta$.
This gives
\[
\frac{4\,\beta}{(\beta+1)^2}\,\int_\Omega \left|\nabla U^\frac{\beta+1}{2}\right|^2\,dx=\Lambda\,\int_\Omega U^{q-1}\,U^\beta\,dx\le \Lambda\,\left(\int_\Omega U^q\,dx\right)^\frac{q-2}{q}\,\left(\int_\Omega U^{\frac{\beta+1}{2}\,q}\,dx\right)^\frac{2}{q}.
\]
If $N\ge 3$, we can now use Sobolev inequality \eqref{sorbole} in the left-hand side, so to get
\begin{equation}
\label{uno}
\mathcal{T}_N\,\left(\int_\Omega U^{\frac{\beta+1}{2}\,2^*}\,dx\right)^\frac{2}{2^*}\le \Lambda\,
\frac{(\beta+1)^2}{4\,\beta}\,\|U\|^{q-2}_{L^q(\Omega)}\,\left(\int_\Omega U^{\frac{\beta+1}{2}\,q}\,dx\right)^\frac{2}{q}.
\end{equation}
We observe that 
\[
\frac{(\beta+1)^2}{4\,\beta}\le \frac{\beta+1}{2},
\]
then, if we set $\vartheta=(\beta+1)/2$, from \eqref{uno} we get
\[
\left(\int_\Omega U^{\vartheta\,2^*}\,dx\right)^\frac{1}{\vartheta\,2^*}\le \left(\frac{\lambda}{\mathcal{S}_N}\right)^\frac{1}{2\,\vartheta}\,
\vartheta^\frac{1}{2\,\vartheta}\,\left(\int_\Omega U^{\vartheta\,q}\,dx\right)^\frac{1}{\vartheta\,q}.
\]
We introduce the sequence of exponents
\[
\vartheta_0=1,\qquad \vartheta_{i+1}=\frac{2^*}{q}\,\vartheta_i=\left(\frac{2^*}{q}\right)^i,\quad i\in\mathbb{N}.
\]
By iterating the previous estimate and observing that
\[
\sum_{i=0}^\infty \frac{1}{\vartheta_i}=\frac{2^*}{2^*-q}\qquad \mbox{ and }\qquad \prod_{i=0}^\infty \vartheta_i^\frac{1}{\vartheta_i}=C_{N,q},
\] 
with a Moser's iteration we get
\[
\|U\|_{L^\infty(\Omega)}\le C\,\Big(\sqrt{\lambda}\Big)^\frac{2^*}{2^*-q}\,\|U\|_{L^q(\Omega)}.
\]
If $N=2$, we need to use \eqref{GNS} in place of \eqref{sorbole}. More precisely, if we take $\gamma=2\,q$ in \eqref{GNS} and use the equation as above, then we get
\[
\begin{split}
C\,\left(\int_\Omega U^{\frac{\beta+1}{2}\,2\,q}\,dx\right)^\frac{2}{2\,q}&\le \left(\int_\Omega \left|\nabla U^\frac{\beta+1}{2}\right|^2\,dx\right)^\frac{1}{2}\,\left(\int_\Omega |U|^{\frac{\beta+1}{2}\,q}\right)^\frac{2}{2\,q}\\
&\le \sqrt{\Lambda\,
\frac{(\beta+1)^2}{4\,\beta}\,\|U\|^{q-2}_{L^q(\Omega)}}\,\left(\int_\Omega U^{\frac{\beta+1}{2}\,q}\,dx\right)^\frac{2}{q},
\end{split}
\]
where $C=C(q)>0$. We can now repeat the same iterative scheme as above, by replacing $2^*$ with $2\,q$. We leave the details to the reader.
\vskip.2cm\noindent
{\bf Case $1<q<2$}. We test again the equation with $\varphi=U^\beta$. As before, we get
\[
\frac{4\,\beta}{(\beta+1)^2}\,\int_\Omega \left|\nabla U^\frac{\beta+1}{2}\right|^2\,dx=\Lambda\,\int_\Omega U^{\beta+q-1}\,dx.
\]
If $N\ge 3$, we use Sobolev inequality in the left-hand side, so to get
\begin{equation}
\label{unoq}
\mathcal{T}_N\,\left(\int_\Omega U^{\frac{\beta+1}{2}\,2^*}\,dx\right)^\frac{2}{2^*}\le \Lambda\,
\frac{(\beta+1)^2}{4\,\beta}\,\int_\Omega U^{\beta+q-1}\,dx.
\end{equation}
If we define the sequence of exponents 
\[
\beta_0=1\qquad \mbox{ and }\qquad \beta_{i+1}=(\beta_i+1)\,\frac{2^*}{2}-(q-1),\quad i\in\mathbb{N},
\]
we can obtain from \eqref{unoq}
\[
\int_\Omega U^{\beta_{i+1}+q-1}\,dx\le \left(\frac{\Lambda}{\mathcal{T}_N}\,
\frac{(\beta_i+1)^2}{4\,\beta_i}\right)^\frac{2^*}{2}\,\left(\int_\Omega U^{\beta_i+q-1}\,dx\right)^\frac{2^*}{2}.
\]
We observe that
\[
\frac{(\beta_i+1)^2}{4\,\beta_i}\le \frac{\beta_i+1}{2}\le 2\,(\beta_i+q-1),
\]
and further define $\vartheta_i=\beta_i+q-1$, with $\vartheta_0=q$. Then we get
\[
\left(\int_\Omega U^{\vartheta_{i+1}}\,dx\right)^\frac{1}{\vartheta_{i+1}}\le \left(\frac{\Lambda}{\mathcal{T}_N}\,
2\,\vartheta_i\right)^{\frac{2^*}{2}\,\frac{1}{\vartheta_{i+1}}}\,\left(\left(\int_\Omega U^{\vartheta_i}\,dx\right)^\frac{1}{\vartheta_i}\right)^{\frac{\vartheta_i}{\vartheta_{i+1}}\frac{2^*}{2}}.
\]
We introduce the notation
\[
Y_i=\|U\|_{L^{\vartheta_i}(\Omega)},\qquad i\in\mathbb{N},
\]
then the previous scheme rewrites as
\[
Y_{i+1}\le \left(\frac{\Lambda}{\mathcal{T}_N}\,
2\,\vartheta_i\right)^{\frac{2^*}{2}\,\frac{1}{\vartheta_{i+1}}}\,Y_i^{\frac{\vartheta_i}{\vartheta_{i+1}}\frac{2^*}{2}}.
\]
We start with $i=0$ and iterate this scheme: after $n$ steps we get
\[
Y_{n+1}\le \left(\frac{2\,\Lambda}{\mathcal{T}_N}\right)^{\frac{1}{\vartheta_{n+1}}\,\sum\limits_{i=0}^n \left(\frac{2^*}{2}\right)^{i+1}}\,\prod_{i=0}^n \left(\vartheta_i\right)^{\frac{1}{\vartheta_{n+1}}\,\left(\frac{2^*}{2}\right)^{n-i+1} } Y_0^{\frac{\vartheta_0}{\vartheta_{n+1}}\,(\frac{2^*}{2})^{n+1}}.
\]
We now observe that by construction\footnote{Indeed, by construction we have 
\[
\vartheta_0=q,\qquad \vartheta_{i+1}=\left(\frac{2^*}{2}\right)^{i+1}\,(\vartheta_i+2-q),
\] 
thus it is not difficult to see that
\[
\vartheta_{i+1}=\left(\frac{2^*}{2}\right)^{i+1}\,\vartheta_0+(2-q)\,\frac{2^*}{2}\,\sum_{k=0}^i \left(\frac{2^*}{2}\right)^k,\qquad \mbox{ for }i\in\mathbb{N}.
\]}
\[
\vartheta_i\sim \left(\frac{2^*}{2}\right)^i\,\left[q+(2-q)\,\frac{2^*}{2^*-2}\right],\qquad \mbox{ for } i \to \infty,
\]
thus we get
\[
\lim_{n\to\infty}\left(\frac{2\,\Lambda}{\mathcal{T}_N}\right)^{\frac{1}{\vartheta_{n+1}}\,\sum\limits_{i=0}^n \left(\frac{2^*}{2}\right)^{i+1}}=\left(\frac{2\,\Lambda}{\mathcal{T}_N}\right)^{\frac{1}{q+(2-q)\,\frac{2^*}{2^*-2}}\,\frac{2^*}{2^*-2}},
\]
and
\[
\lim_{n\to\infty} Y_0^{\frac{\vartheta_0}{\vartheta_{n+1}}\,(\frac{2^*}{2})^{n+1}}=Y_0^\frac{q}{q+(2-q)\,\frac{2^*}{2^*-2}}.
\]
Moreover, we have 
\[
\begin{split}
\lim_{n\to\infty} \prod_{i=0}^n \left(\vartheta_i\right)^{\frac{1}{\vartheta_{n+1}}\,\left(\frac{2^*}{2}\right)^{n-i+1} }&=\lim_{n\to\infty} \exp\left(\frac{1}{\vartheta_{n+1}}\,\left(\frac{2^*}{2}\right)^{n+1}\,\sum_{i=0}^n \left(\frac{2^*}{2}\right)^{-i}\,\log \vartheta_i\right)<+\infty,
\end{split}
\]
again thanks to the asymptotic behaviour of $\vartheta_i$. In conclusion, we get
\[
Y_\infty\le C\,\left(\Lambda^\frac{2^*}{2^*-2}\,Y_0^q\right)^\frac{1}{q+(2-q)\,\frac{2^*}{2^*-2}},
\]
for a constant $C=C(N,q)>0$. By recalling the definition of $\Lambda$ and $Y_i$, this is the same as
\[
\|U\|_{L^\infty(\Omega)}\le C\,\Big(\sqrt{\lambda}\Big)^\frac{2^*}{2^*-q}\,\|U\|_{L^q(\Omega)}.
\]
This concludes the proof.
\par
The case $N=2$ needs the following modification. We first observe that by coupling \eqref{GNS} with 
\[
\lambda_1(\Omega;q)\,\left(\int_\Omega |\varphi|^q\,dx\right)^\frac{2}{q}\le \int_\Omega |\nabla \varphi|^2\,dx,\qquad \mbox{ for every } \varphi\in\mathcal{D}^{1,2}_0(\Omega),
\]
we get for $\gamma>q$
\[
\mathcal{T}_{q,\gamma}\,\Big(\lambda_1(\Omega;q)\Big)^\frac{q}{\gamma}\,\left(\int_\Omega |\varphi|^\gamma\right)^\frac{2}{\gamma}\le \int_\Omega |\nabla \varphi|^2\,dx,\qquad \mbox{ for every } \varphi\in\mathcal{D}^{1,2}_0(\Omega).
\]
We use this estimate with $\gamma=2\,q$, thus in place of \eqref{unoq} we now get
\[
\begin{split}
C\,\Big(\lambda_1(\Omega;q)\Big)^\frac{1}{2}\,\left(\int_\Omega U^{\frac{\beta+1}{2}\,2\,q}\,dx\right)^\frac{2}{2\,q}&\le \int_\Omega \left|\nabla U^\frac{\beta+1}{2}\right|^2\,dx\\
&\le \Lambda\,
\frac{(\beta+1)^2}{4\,\beta}\,\int_\Omega U^{\beta+q-1}\,dx,
\end{split}
\]
with $C>0$. We can repeat the iterative scheme as above, again with $2\,q$ in place of $2^*$.
\end{proof}

The following result is important in order to study the set $\mathrm{Spec}(\Omega;q)$ for a disconnected set $\Omega$. It is contained in \cite[Corollary 2.2]{BF}: the result in \cite{BF} is stated for $1<q<2$ only, but a closer inspection of the proof reveals that it still works for $q>2$.
\begin{prop}[The ``spin formula'' for disconnected sets]
\label{prop:crucial}
Let $1<q<2^*$ with $q\not =2$ and let $\Omega\subset\mathbb{R}^N$ be a $q-$admissible open set. Let $\#\in\mathbb N\cup\{+\infty\}$
and suppose that
\[
\Omega=\bigcup_{i=1}^\# \Omega_i,
\]
with $\Omega_i\subset \mathbb{R}^N$ being an open set, such that $\mathrm{dist}(\Omega_i,\Omega_j)>0$, for $i\not =j$. Then $\lambda$ is a $q-$eigenvalue of $\Omega$ if and only if it is of the form
\begin{equation}
\label{rap_gen}
\lambda=\left[\displaystyle\sum_{i=1}^\#\left(\frac{\delta_i}{\lambda_i}\right)^\frac{q}{2-q}\right]^\frac{q-2}{q}\qquad \mbox{ for some $q-$eigenvalue $\lambda_i$ of $\Omega_i$},
\end{equation}
where the {\rm spin coefficients} $\delta_i$ are such that
\[
\delta_i\in\{0,1\}\qquad \mbox{ and }\qquad \sum_{i=1}^\#\delta_i\not =0.
\]
Moreover, if we set 
\[
|\alpha_i|=\displaystyle\left(\frac{\lambda}{\lambda_i}\right)^\frac{1}{2-q},
\]
each corresponding $q-$eigenfunction $U$ of $\Omega$ has the form
\[
U(x)=C\,\sum_{i=1}^\#\delta_i\,\alpha_i\,u_i(x),
\]
where $C\in\mathbb{R}$ and $u_i\in\mathcal{D}^{1,2}_0(\Omega)$ is $q-$eigenfunction of $\Omega_i$ with unit $L^q$ norm corresponding to $\lambda_i$.
\end{prop}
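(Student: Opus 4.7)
The plan is to exploit the locality of the equation \eqref{autosalonebis}. Since $\mathrm{dist}(\Omega_i,\Omega_j)>0$ for $i\neq j$, for each index $i$ one can choose a cut-off function $\eta_i\in C^\infty(\mathbb{R}^N)$ equal to $1$ on a neighbourhood of $\overline{\Omega_i}$ and vanishing at positive distance from every $\Omega_j$ with $j\neq i$; multiplication by $\eta_i$ is then a bounded projection of $\mathcal{D}^{1,2}_0(\Omega)$ onto $\mathcal{D}^{1,2}_0(\Omega_i)$. Given a $q$-eigenpair $(U,\lambda)$ on $\Omega$, I would set $U_i=\eta_i\,U\in \mathcal{D}^{1,2}_0(\Omega_i)$, so that $U=\sum_i U_i$ with pairwise separated supports. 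Testing \eqref{autosalonebis} against $\eta_i\,\varphi$ with $\varphi\in \mathcal{D}^{1,2}_0(\Omega_i)$, and using $\nabla\eta_i\equiv 0$ on $\Omega_i$, immediately gives
\[
-\Delta U_i=\lambda\,\|U\|_{L^q(\Omega)}^{2-q}\,|U_i|^{q-2}\,U_i\qquad \mbox{in }\Omega_i.
\]

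Next I would define the spin coefficients by $\delta_i=1$ if $U_i\not\equiv 0$ and $\delta_i=0$ otherwise (with $\sum_i\delta_i\ge 1$, since $U\not\equiv 0$). For each $i$ with $\delta_i=1$, setting $t_i:=\|U_i\|_{L^q(\Omega_i)}>0$, the previous equation is \eqref{autosalonebis} on $\Omega_i$ with eigenvalue
\[
\lambda_i:=\lambda\,\frac{\|U\|_{L^q(\Omega)}^{2-q}}{t_i^{2-q}}\in\mathrm{Spec}(\Omega_i;q),
\]
so that $t_i=\|U\|_{L^q(\Omega)}\,(\lambda/\lambda_i)^{1/(2-q)}$. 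Taking $C=\|U\|_{L^q(\Omega)}$ and $u_i=U_i/t_i$, this is precisely the claimed representation, with $|\alpha_i|=(\lambda/\lambda_i)^{1/(2-q)}$ and the sign absorbed into $u_i\mapsto-u_i$ if necessary. Finally, the identity $\|U\|_{L^q(\Omega)}^q=\sum_i t_i^q$, which holds because the $\Omega_i$ are disjoint, combined with the value of $t_i$ above gives
\[
1=\sum_i\delta_i\left(\frac{\lambda}{\lambda_i}\right)^{\frac{q}{2-q}},
\]
which, once rearranged, is the spin formula \eqref{rap_gen}.

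For the converse, starting from an admissible family $\{(\lambda_i,u_i,\delta_i)\}$, I would define $\lambda$ by \eqref{rap_gen}, set $\alpha_i$ with the prescribed modulus (arbitrary sign), pick $C>0$, and verify directly that $U=C\sum_i\delta_i\alpha_i u_i$ satisfies \eqref{autosalonebis} on each $\Omega_i$ separately (by construction, upon rescaling the multiplier back to $\lambda$ using the expression of $|\alpha_i|$), and hence on all of $\Omega$, again because the components are separated. The only delicate point, and the main obstacle I anticipate, is the countable case $\#=+\infty$: one must show that the partial sums defining $U$ converge in $\mathcal{D}^{1,2}_0(\Omega)$ and that $U\in L^q(\Omega)$. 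Both reduce to the summability of $\sum_i\delta_i\,\lambda_i^{-q/(2-q)}$, which is exactly the finite quantity appearing in \eqref{rap_gen}; coupled with the energy identity $\|\nabla u_i\|_{L^2(\Omega_i)}^2=\lambda_i$ and the a priori $L^\infty$ bound of Proposition \ref{prop:Linfty}, it controls both $\sum_i\|\nabla U_i\|_{L^2(\Omega_i)}^2$ and $\sum_i\|U_i\|_{L^q(\Omega_i)}^q$, closing the argument.
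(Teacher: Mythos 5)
Your argument is sound and almost certainly coincides with the proof in \cite[Corollary 2.2]{BF} that the paper cites (the paper itself gives no proof of this statement): decompose along components, rescale each restriction to read off a local $q$-eigenpair, use $\|U\|_{L^q(\Omega)}^q=\sum_i\|U_i\|_{L^q(\Omega_i)}^q$ to obtain the spin formula, and reverse the steps for the converse. Two small remarks. First, when $\#=+\infty$ the global cut-off $\eta_i\in C^\infty(\mathbb{R}^N)$ you describe need not exist: the pairwise distances $\mathrm{dist}(\Omega_i,\Omega_j)$ are positive but may tend to $0$, so $\Omega_j$ can accumulate on $\partial\Omega_i$. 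What you actually need is only that $U|_{\Omega_i}\in\mathcal{D}^{1,2}_0(\Omega_i)$, and this is true because $\Omega_i$ is open and relatively closed in $\Omega$ (hence a union of connected components), and any compact subset of $\Omega$ meets only finitely many $\Omega_j$; approximating $U$ by $C^\infty_0(\Omega)$ functions and restricting to $\Omega_i$ then gives a Cauchy sequence in $\mathcal{D}^{1,2}_0(\Omega_i)$. Second, the appeal to Proposition \ref{prop:Linfty} in the last paragraph is unnecessary: using $\|\nabla u_i\|_{L^2(\Omega_i)}^2=\lambda_i$ and $|\alpha_i|^{2-q}=\lambda/\lambda_i$ one gets directly
\[
\sum_i\delta_i\,|\alpha_i|^q=\sum_i\delta_i\left(\frac{\lambda}{\lambda_i}\right)^{\frac{q}{2-q}}=1
\quad\mbox{and}\quad
\sum_i\delta_i\,|\alpha_i|^2\,\lambda_i=\lambda^{\frac{2}{2-q}}\sum_i\delta_i\,\lambda_i^{-\frac{q}{2-q}}=\lambda,
\]
so $\|U\|_{L^q(\Omega)}^q=C^q$ and $\|\nabla U\|_{L^2(\Omega)}^2=C^2\lambda$ are finite, and the series for $U$ converges in $\mathcal{D}^{1,2}_0(\Omega)$ without any $L^\infty$ control.
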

\begin{oss}
\label{oss:pezzi}
By fixing $j$ and choosing 
\[
\delta_i=\left\{\begin{array}{cc}
1,& \mbox{ if } i=j,\\
0,& \mbox{ if } i\not =j,
\end{array}
\right.
\]
we get from the Proposition \ref{prop:crucial} that 
\[
\mathrm{Spec}(\Omega_j;q)\subset \mathrm{Spec}\left(\bigcup_{i=1}^\# \Omega_i;q\right)\qquad \mbox{ and thus }\qquad \bigcup_{j=1}^\#\mathrm{Spec}(\Omega_j;q)\subset \mathrm{Spec}\left(\bigcup_{i=1}^\# \Omega_i;q\right).
\]
However, differently from the case $q=2$, the collection of all these spectra {\it does not exhaust the whole spectrum of}
\[
\Omega=\bigcup_{i=1}^\# \Omega_i.
\]
This will be clear from Examples \ref{exa:1} and \ref{exa:2} below.
\end{oss}
The following result is straightforward, the details are left to the reader.
\begin{prop}
\label{prop:free}
Let $1<q<2^*$ with $q\not =2$, we define the free functional
\[
\mathfrak{F}_q(\varphi)=\frac{1}{2}\,\int_\Omega |\nabla \varphi|^2\,dx-\frac{1}{q}\,\int_\Omega |\varphi|^q\,dx,\qquad \mbox{ for } \varphi\in\mathcal{D}^{1,2}_0(\Omega).
\]
Then we have:
\begin{enumerate}
\item if $(u,\lambda)$ is a $q-$eigenpair, the function 
\[
U=\lambda^\frac{1}{q-2}\,\frac{u}{\|u\|_{L^q(\Omega)}},
\] 
is a critical point of $\mathfrak{F}_q$, with critical value 
\[
\left(\frac{1}{2}-\frac{1}{q}\right)\,\lambda^\frac{q}{q-2};
\]
\vskip.2cm
\item if $U\in\mathcal{D}^{1,2}_0(\Omega)$ is a critical point of $\mathfrak{F}_q$, then
\[
(U,\|U\|_{L^q(\Omega)}^{q-2})
\]
is a $q-$eigenpair.
\end{enumerate}
\end{prop}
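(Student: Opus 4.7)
The proof is essentially a scaling exercise, so the plan is just to track how the two equations are related by homogeneity. First I would compute the Euler--Lagrange equation for critical points of $\mathfrak{F}_q$: a standard variation gives $-\Delta U=|U|^{q-2}\,U$ in the weak sense on $\mathcal{D}^{1,2}_0(\Omega)$. For part (2), this is basically the conclusion for free: given a critical point $U$, set $\lambda=\|U\|_{L^q(\Omega)}^{q-2}$ and observe that $\lambda\,\|U\|_{L^q(\Omega)}^{2-q}=1$, so the Euler--Lagrange equation coincides with \eqref{autosalonebis}, exhibiting $(U,\lambda)$ as a $q-$eigenpair.

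For part (1), I would let $c=\lambda^{1/(q-2)}/\|u\|_{L^q(\Omega)}$ and set $U=c\,u$. Since $u$ solves \eqref{autosalonebis} with constant $\lambda\,\|u\|_{L^q(\Omega)}^{2-q}$, one computes
\[
-\Delta U=c\,\lambda\,\|u\|_{L^q(\Omega)}^{2-q}\,|u|^{q-2}\,u\qquad\text{while}\qquad |U|^{q-2}\,U=c^{q-1}\,|u|^{q-2}\,u,
\]
and the two right-hand sides agree precisely because $c^{q-2}=\lambda/\|u\|_{L^q(\Omega)}^{q-2}$ by construction. Hence $U$ is a critical point of $\mathfrak{F}_q$.

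To identify the critical value, I would test \eqref{autosalonebis} with $u$ itself to get the Pohozaev--type identity $\int_\Omega|\nabla u|^2\,dx=\lambda\,\|u\|_{L^q(\Omega)}^2$ (already recorded before \eqref{poincarello}). Then
\[
\mathfrak{F}_q(U)=\frac{c^2}{2}\int_\Omega|\nabla u|^2\,dx-\frac{c^q}{q}\int_\Omega|u|^q\,dx=\frac{c^2}{2}\,\lambda\,\|u\|_{L^q(\Omega)}^2-\frac{c^q}{q}\,\|u\|_{L^q(\Omega)}^q,
\]
and plugging in $c=\lambda^{1/(q-2)}/\|u\|_{L^q(\Omega)}$ gives $c^2\lambda\,\|u\|_{L^q(\Omega)}^2=c^q\,\|u\|_{L^q(\Omega)}^q=\lambda^{q/(q-2)}$, so $\mathfrak{F}_q(U)=\bigl(\tfrac12-\tfrac1q\bigr)\lambda^{q/(q-2)}$ as claimed.

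There is no real obstacle here; the only mildly delicate point is book-keeping the exponents $q/(q-2)$ and $1/(q-2)$, which can be negative when $1<q<2$. It is worth noting that the rescaling $U=c\,u$ is legitimate in that regime because $\lambda>0$ (as $\lambda_1(\Omega;q)>0$ is a lower bound for every $q-$eigenvalue) and $\|u\|_{L^q(\Omega)}>0$, so $c$ is a well-defined positive real number regardless of the sign of $q-2$.
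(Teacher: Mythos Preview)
Your proof is correct and is exactly the routine verification the paper has in mind: the authors state that the result is straightforward and leave the details to the reader, and what you have written fills in those details cleanly (Euler--Lagrange equation for $\mathfrak{F}_q$, scaling by $c=\lambda^{1/(q-2)}/\|u\|_{L^q(\Omega)}$, and the energy computation using $\int_\Omega|\nabla u|^2\,dx=\lambda\,\|u\|_{L^q(\Omega)}^2$). There is nothing to add.
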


Finally, we will use the following classical result. For the proof, we refer for example to \cite[Lemma 1.4, Chapter III]{St}. We recall that this is based on testing the equation with the function $\langle x,\nabla u\rangle$ and then using some integrations by parts.
\begin{prop}[Rellich-Pohozaev identity]
\label{lm:RP}
Let $\Omega\subset\mathbb{R}^N$ be a bounded open set, satisfying (at least) one of the following two conditions: 
\begin{itemize}
\item $\Omega$ is of class $C^{1,1}$;
\vskip.2cm
\item $\Omega$ is convex.
\end{itemize}
Let $1<q<2^*$, if $(u,\lambda)$ is a $q-$eigenpair, then we have
\[
\lambda\,\left(\int_\Omega |u|^q\,dx\right)^\frac{2}{q}=C_{q,N}\,\int_{\partial\Omega} |\nabla u|^2\,\langle x,\nu_\Omega\rangle\,d\mathcal{H}^{N-1},
\]
where
\[ 
C_{q,N}=\left\{\begin{array}{ll}
\dfrac{q}{4},& \mbox{ if } N=2,\\
&\\
\dfrac{1}{2\,N}\,\dfrac{2^*\,q}{2^*-q},& \mbox{ if }N\ge 3.
\end{array}
\right.
\]
\end{prop}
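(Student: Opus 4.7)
The plan is to follow the classical Rellich--Pohozaev recipe and test the equation $-\Delta u = \Lambda\,|u|^{q-2}u$, where $\Lambda = \lambda\,\|u\|_{L^q(\Omega)}^{2-q}$, against the function $\langle x, \nabla u\rangle$. Before doing any algebra I would first secure enough regularity to make this rigorous: Proposition~\ref{prop:Linfty} gives $u \in L^\infty(\Omega)$, hence the right-hand side of the equation is bounded, and standard Calder\'on--Zygmund estimates up to the boundary yield $u \in H^2(\Omega) \cap C^{1,\alpha}(\overline{\Omega})$ when $\partial\Omega$ is $C^{1,1}$, while Kadlec's theorem (possibly combined with an approximation of $\Omega$ from inside by smooth convex subdomains carrying uniform $H^2$ bounds) handles the convex case. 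This step is the only genuine analytic obstacle; the remaining work is essentially bookkeeping.

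Next, the right-hand side is computed from the identity
\[
|u|^{q-2}u\,\langle x, \nabla u\rangle = \frac{1}{q}\,\langle x, \nabla |u|^q\rangle,
\]
together with the divergence theorem and the boundary condition $u = 0$ on $\partial\Omega$, yielding
\[
\Lambda \int_\Omega |u|^{q-2}u\,\langle x, \nabla u\rangle\,dx = -\frac{N\,\Lambda}{q}\int_\Omega |u|^q\,dx.
\]
On the left-hand side, a single integration by parts produces the boundary term $-\int_{\partial\Omega}(\partial_{\nu_\Omega} u)\,\langle x, \nabla u\rangle\, d\mathcal{H}^{N-1}$ and the interior term $\int_\Omega \langle \nabla u, \nabla\langle x, \nabla u\rangle\rangle\, dx$. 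Expanding the latter as $\int_\Omega |\nabla u|^2\,dx + \tfrac{1}{2}\int_\Omega \langle x, \nabla |\nabla u|^2\rangle\,dx$ and applying the divergence theorem a second time to the last piece produces the extra contribution $-\tfrac{N}{2}\int_\Omega |\nabla u|^2\,dx + \tfrac{1}{2}\int_{\partial\Omega} |\nabla u|^2\,\langle x, \nu_\Omega\rangle\, d\mathcal{H}^{N-1}$. The two boundary integrals collapse into $-\tfrac{1}{2}\int_{\partial\Omega} |\nabla u|^2\,\langle x, \nu_\Omega\rangle\, d\mathcal{H}^{N-1}$, because $u \equiv 0$ on $\partial\Omega$ forces $\nabla u = (\partial_{\nu_\Omega} u)\,\nu_\Omega$ there.

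Finally, I would invoke the elementary identity $\int_\Omega |\nabla u|^2\,dx = \lambda\,\left(\int_\Omega |u|^q\,dx\right)^{2/q}$, obtained by testing the eigenvalue equation against $u$ itself (this also identifies $\Lambda \int_\Omega |u|^q\,dx$ with $\int_\Omega |\nabla u|^2\,dx$). Equating the two sides collapses everything to
\[
\lambda\,\left(\int_\Omega |u|^q\,dx\right)^{2/q}\,\frac{2N - q(N-2)}{2\,q} = \frac{1}{2}\int_{\partial\Omega} |\nabla u|^2\,\langle x, \nu_\Omega\rangle\, d\mathcal{H}^{N-1}.
\]
Matching prefactors is then purely arithmetic: one checks $\frac{q}{2N - q(N-2)} = \frac{1}{2N}\,\frac{2^*\,q}{2^* - q}$ when $N \ge 3$, and reduces to $q/4$ when $N = 2$, exactly reproducing the claimed constant $C_{q,N}$.
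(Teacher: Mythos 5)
Your proof is correct and follows exactly the route the paper indicates: the paper does not reproduce the argument but refers to Struwe's book and explicitly says the identity follows by testing the equation with $\langle x,\nabla u\rangle$ and integrating by parts, which is precisely what you carry out (and your arithmetic, including the constant $C_{q,N}$, checks out).
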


\section{The sub-homogeneous case $1<q<2$}
\label{sec:3}

\subsection{Results}

\begin{teo}[Simplicity]
\label{teo:simplicity}
Let $1<q<2$ and let $\Omega\subset\mathbb{R}^N$ be a $q-$admissible open connected set. Then $\lambda_1(\Omega;q)$ is simple.
\end{teo}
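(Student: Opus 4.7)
The plan is to deduce simplicity from the strict monotonicity of $s\mapsto s^{q-2}$ on $(0,\infty)$ when $1<q<2$, via a Picone-type argument of Brezis--Oswald flavor. Let $u,v\in\mathcal{S}_q(\Omega)$ be two first $q$-eigenfunctions. By Proposition \ref{lm:constantsign} each has constant sign, so we may assume $u,v\geq 0$. Each is bounded by Proposition \ref{prop:Linfty} and satisfies $-\Delta w=\lambda_1(\Omega;q)\,w^{q-1}$ weakly on the connected set $\Omega$, so the strong minimum principle yields $u,v>0$ in $\Omega$.

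The crux is to test the weak formulation of the equation for $u$ against $v^2/u$, and symmetrically. Expanding $\nabla(v^2/u)=2(v/u)\nabla v-(v/u)^2\nabla u$ and invoking the pointwise inequality $2(v/u)\,\nabla u\cdot\nabla v\le |\nabla v|^2+(v/u)^2|\nabla u|^2$ yields
\[
\lambda_1(\Omega;q)\int_\Omega u^{q-2}v^2\,dx=\int_\Omega\nabla u\cdot\nabla\!\left(\frac{v^2}{u}\right)dx\le\int_\Omega|\nabla v|^2\,dx=\lambda_1(\Omega;q),
\]
that is, $\int_\Omega u^{q-2}v^2\,dx\le 1=\int_\Omega v^q\,dx$, together with the symmetric bound $\int_\Omega v^{q-2}u^2\,dx\le\int_\Omega u^q\,dx$. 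Summing,
\[
\int_\Omega(u^{q-2}-v^{q-2})(v^2-u^2)\,dx\le 0.
\]
Since $1<q<2$, the map $s\mapsto s^{q-2}$ is strictly decreasing while $s\mapsto s^2$ is strictly increasing on $(0,\infty)$, so the integrand is pointwise nonnegative and vanishes only where $u=v$. Hence $u\equiv v$ in $\Omega$.

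The main obstacle lies in the rigorous use of $v^2/u$ as a test function in the weak form of \eqref{autosalonebis}: a priori this quotient is only defined where $u>0$, and its Sobolev regularity must be checked. The standard workaround is to work with $v^2/(u+\varepsilon)$ and let $\varepsilon\to 0^+$, or to truncate $v$ from above. The positivity of $u,v$, secured through the strong minimum principle and connectedness, is essential both to make the quotient meaningful and to complete the limit passage. Subhomogeneity $q<2$ enters only at the very last step, through the crucial sign of $(u^{q-2}-v^{q-2})(v^2-u^2)$; this rigidity mechanism collapses for $q>2$, consistently with the failure of simplicity in Example \ref{exa:dancer}.
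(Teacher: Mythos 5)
Your proof is correct and follows essentially the same route as the paper: the paper also invokes the Picone inequality (attributed to Brezis--Oswald), tests the equation for $u$ with $v^2/(u+\varepsilon)$, passes to the limit by Fatou to obtain $\int_\Omega u^{q-2}v^2\,dx\le 1$ and its symmetric counterpart, and concludes from the pointwise sign of $(u^{q-2}-v^{q-2})(u^2-v^2)$. Your rendering of Picone by expanding $\nabla(v^2/u)$ and applying Young's inequality, and your explicit appeal to the strong minimum principle for positivity, are just slightly more spelled-out versions of the same steps.
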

\begin{proof}
There are different proofs of this fact. We could for example exploit the so-called {\it hidden convexity}, i.e. the fact that the Dirichlet integral is convex along curves of the form
\[
\sigma_t(x)=\left((1-t)\,u_0(x)^q+t\,u_1(x)^q\right)^\frac{1}{q},\qquad \mbox{ for }  t\in[0,1],
\]
whenever $1\le q\le 2$ and $u_0,u_1$ are nonnegative, see \cite[Proposition 4]{Ka} and also \cite[Proposition 2.6]{BFkodai} for a more general statement. Moreover, convexity is strict on functions satisfying the minimum principle.
\par
Here we prefer to use a trick introduced by Brezis and Oswald in \cite{BO}, which is based on the\footnote{As explained in \cite[Section 3]{BFkodai}, this proof and the one based on the hidden convexity are essentially the same.} {\it Picone's inequality}. The latter assures that
\begin{equation}
\label{picone}
\left\langle \nabla \psi,\nabla\left(\frac{\varphi^2}{\psi}\right)\right\rangle\le |\nabla \varphi|^2,
\end{equation}
for every pair of differentiable functions $\psi,\varphi$, with $\varphi\ge 0$ and $\psi>0$. Let us suppose that $u,v\in\mathcal{D}^{1,2}_0(\Omega)$ are first $q-$eigenfunctions. By Proposition \ref{lm:constantsign}, we know that $u,v$ have constant sign, we can suppose them to be positive.  For simplicity, we further assume that they both have unit $L^q(\Omega)$ norm. If we test the equation for $u$ with
\[
\varphi=\frac{v^2}{u+\varepsilon},\qquad \mbox{ for } \varepsilon>0,
\]
we get
\[
\lambda_1(\Omega)\,\int_\Omega u^{q-1}\,\frac{v^2}{u+\varepsilon}=\int_\Omega \left\langle \nabla u,\nabla \frac{v^2}{u+\varepsilon}\right\rangle\,dx\le \int_\Omega |\nabla v|^2\,dx=\lambda_1(\Omega).
\]
Observe that we used \eqref{picone} above.
By taking the limit as $\varepsilon$ goes to $0$ and using Fatou's Lemma, we get
\begin{equation}
\label{grande}
\int_\Omega u^{q-2}\,v^2\,dx\le 1.
\end{equation}
We can repeat the above computations, by exchanging the roles of $u$ and $v$. This also gives
\begin{equation}
\label{grande2}
\int_\Omega v^{q-2}\,u^2\,dx\le 1.
\end{equation}
We now observe that for every $a,b> 0$
\[
(a^{q-2}-b^{q-2})\,(a^2-b^2)\le 0,
\]
and the inequality sign is strict, whenever $a\not= b$. By taking $a=u(x)$ and $b=v(x)$ and integrating, we get
\[
\begin{split}
0&\ge \int_\Omega (u^{q-2}-v^{q-2})\,(u^2-v^2)\,dx\\
&=\int_\Omega u^q\,dx+\int_\Omega v^q\,dx-\left(\int_\Omega u^{q-2}\,v^2\,dx+\int_\Omega v^{q-2}\,u^2\,dx\right)\\
&\ge \int_\Omega u^q\,dx+\int_\Omega v^q\,dx-2=0. 
\end{split}
\]
In the last inequality, we used \eqref{grande} and \eqref{grande2}. Thus we get
\[
\int_\Omega (u^{q-2}-v^{q-2})\,(u^2-v^2)\,dx=0,
\]
which in turn implies that $u=v$ in $\Omega$.
\end{proof}

\begin{teo}[Positive eigenfunctions]
\label{teo:positive}
Let $1<q<2$ and let $\Omega\subset\mathbb{R}^N$ be a $q-$admissible open connected set. If $\lambda\in\mathrm{Spec}(\Omega;q)$ admits a constant sign eigenfunction, then $\lambda=\lambda_1(\Omega;q)$. 
\end{teo}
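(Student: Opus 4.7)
The plan is to mimic the Picone-based strategy of Theorem \ref{teo:simplicity}, coupled with a H\"older-type reverse inequality, so as to sandwich $\lambda$ between $\lambda_1(\Omega;q)$ and itself. Let $u\in\mathcal{D}^{1,2}_0(\Omega)$ be a constant-sign $q-$eigenfunction associated with $\lambda$; up to changing sign and rescaling, I may assume $u\ge 0$ and $\|u\|_{L^q(\Omega)}=1$, so that $u$ solves $-\Delta u=\lambda\,u^{q-1}$ in the weak sense. The strong minimum principle (applied to the weakly superharmonic function $u\not\equiv 0$ on the connected set $\Omega$, whose continuity follows from Proposition \ref{prop:Linfty} and standard elliptic regularity) yields $u>0$ in $\Omega$, so that $u^{q-2}$ is pointwise meaningful. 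By Proposition \ref{lm:constantsign} and Theorem \ref{teo:simplicity}, there exists a unique positive first $q-$eigenfunction $v$ with $\|v\|_{L^q(\Omega)}=1$, solving $-\Delta v=\lambda_1(\Omega;q)\,v^{q-1}$.

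The first step is a Picone upper bound. Repeating verbatim the argument of Theorem \ref{teo:simplicity}, I would test the equation for $u$ against $\varphi=v^2/(u+\varepsilon)$, invoke Picone's inequality \eqref{picone}, and let $\varepsilon\to 0^+$ via monotone convergence, so as to obtain
\[
\lambda\,\int_\Omega u^{q-2}\,v^2\,dx\le \int_\Omega |\nabla v|^2\,dx=\lambda_1(\Omega;q).
\]

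The second step provides the converse estimate on the same integral, through a direct H\"older computation. The starting point is the elementary identity
\[
v^q=\left(u^{q-2}\,v^2\right)^{q/2}\,u^{(2-q)\,q/2}.
\]
Applying H\"older's inequality with conjugate exponents $2/q$ and $2/(2-q)$ (both greater than $1$ since $1<q<2$) gives
\[
1=\int_\Omega v^q\,dx\le \left(\int_\Omega u^{q-2}\,v^2\,dx\right)^{q/2}\,\left(\int_\Omega u^q\,dx\right)^{(2-q)/2}=\left(\int_\Omega u^{q-2}\,v^2\,dx\right)^{q/2},
\]
that is, $\int_\Omega u^{q-2}\,v^2\,dx\ge 1$. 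Combining with the first step yields $\lambda\le \lambda_1(\Omega;q)$; since trivially $\lambda\ge \lambda_1(\Omega;q)$ by the minimality of the first $q-$eigenvalue, equality follows. The one delicate point is the strict positivity of $u$, needed both to make $u^{q-2}$ pointwise meaningful and to justify the passage to the limit $\varepsilon\to 0^+$; this is handled exactly as in the proof of Theorem \ref{teo:simplicity} and relies crucially on the hypothesis that $\Omega$ is connected.
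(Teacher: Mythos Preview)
Your proof is correct and takes a genuinely different route from the paper's. The paper invokes the \emph{generalized} Picone inequality \eqref{piccone}, testing the equation for $u$ with $U^q/(\varepsilon+u)^{q-1}$ to obtain
\[
\lambda\int_\Omega u^{q-1}\,\frac{U^q}{(\varepsilon+u)^{q-1}}\,dx\le \int_\Omega |\nabla U|^q\,|\nabla u|^{2-q}\,dx,
\]
and then applies H\"older's inequality \emph{to the gradients} on the right-hand side. You instead recycle the \emph{standard} (quadratic) Picone inequality \eqref{picone} already used in Theorem~\ref{teo:simplicity}, and compensate by a separate H\"older step on the functions themselves, via the factorisation $v^q=(u^{q-2}v^2)^{q/2}\,u^{(2-q)q/2}$, to get the lower bound $\int_\Omega u^{q-2}v^2\,dx\ge 1$. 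Your approach is marginally more elementary in that it avoids introducing the generalized Picone inequality, and it makes the role of the constraint $1<q<2$ transparent (it enters only through the conjugate pair $2/q,\,2/(2-q)$). The paper's route, on the other hand, packages the two inequalities into a single test-function computation and extends more directly to the $p$-Laplacian setting discussed in \cite{BFkodai}.
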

\begin{proof}
Here as well, there are various proofs of this fact. The quickest one is probably the one based on the following {\it generalized Picone's inequality} (see \cite[Proposition 2.9]{BFkodai})
\begin{equation}
\label{piccone}
\left\langle \nabla \psi,\nabla\left(\frac{\varphi^q}{\psi^{q-1}}\right)\right\rangle\le |\nabla \varphi|^q\,|\nabla \psi|^{2-q},
\end{equation}
which holds for every pair of differentiable functions $\psi,\varphi$, with $\varphi\ge 0$ and $\psi>0$.
\par
We first observe that we only need to prove that $\lambda\le \lambda_1(\Omega;q)$. Then we take $u$ to be a positive $q-$eigenfunction corresponding to $\lambda$ and $U$ to be a first positive $q-$eigenfunction. As usual, we take the normalization
\[
\int_\Omega u^q\,dx=\int_\Omega U^q\,dx=1.
\]
By testing the equation with the function
\[
\varphi=\frac{U^q}{(\varepsilon+u)^{q-1}},
\]
we have
\[
\begin{split}
\lambda\,\int_\Omega u^{q-1}\,\frac{U^q}{(\varepsilon+u)^{q-1}}\,dx&=\int_\Omega \left\langle \nabla u,\nabla\frac{U^q}{(\varepsilon+u)^{q-1}}\right\rangle\,dx\\&\le \int_\Omega |\nabla U|^q\,|\nabla u|^{2-q}\,dx,
\end{split}
\]
thanks to \eqref{piccone}. We also used that $\nabla u=\nabla (\varepsilon+u)$. If we use H\"older's inequality in the last integral and recall that
\[
\int_\Omega |\nabla U|^2\,dx=\lambda_1(\Omega;q),\qquad \int_\Omega |\nabla u|^2\,dx=\lambda, 
\]
we thus obtain
\[
\lambda\,\int_\Omega u^{q-1}\,\frac{U^q}{(\varepsilon+u)^{q-1}}\,dx\le \Big(\lambda_1(\Omega;q)\Big)^\frac{q}{2}\,\lambda^\frac{2-q}{2},
\]
that is 
\[
\lambda\,\left(\int_\Omega u^{q-1}\,\frac{U^q}{(\varepsilon+u)^{q-1}}\,dx\right)^\frac{2}{q}\le \lambda_1(\Omega;q).
\]
If we now take the limit as $\varepsilon$ goes to $0$, use Fatou's Lemma and the fact that $u>0$ by the minimum principle, we finally get the desired result.
\end{proof}
The next result assures that there exists a gap in $\mathrm{Spec}(\Omega;q)$ after the first $q-$eigenvalue, provided the set $\Omega$ is sufficiently ``nice''. As we will show in the next subsection, the assumptions are optimal, in a sense.
This is taken from \cite{BDF}, which actually contains a slightly more general result.
\begin{teo}[Isolation]
\label{teo:isolation}
Let $\Omega\subset\mathbb{R}^N$ be an open bounded set, having a finite number of connected components. Let us suppose that each connected component has a Lipschitz boundary and satisfies the uniform interior ball condition. 
\par
Then for every $1<q<2$, the first eigenvalue $\lambda_1(\Omega;q)$ is isolated. In other words, if we define
\[
\inf\{\lambda \in\mathrm{Spec}(\Omega;q)\, :\, \lambda>\lambda_{1}(\Omega;q)\},
\]
then this is a $q-$eigenvalue, larger than $\lambda_1(\Omega;q)$.
\end{teo}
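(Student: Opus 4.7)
The plan is to argue by contradiction. Suppose there exist $q$-eigenvalues $\lambda_n \to \lambda_1(\Omega;q)$ with $\lambda_n > \lambda_1(\Omega;q)$ and associated $q$-eigenfunctions $u_n$ normalized by $\|u_n\|_{L^q(\Omega)}=1$. The target is to show that for $n$ large each $u_n$ has constant sign on every connected component of $\Omega$; then Theorem \ref{teo:positive} applied component-wise forces $\lambda_n$ to equal a fixed $q$-eigenvalue of the components, and this fixed value is strictly above $\lambda_1(\Omega;q)$, giving the desired contradiction.

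First I would set up compactness. Since $\int_\Omega |\nabla u_n|^2\,dx = \lambda_n$ is bounded and $\Omega$ is bounded (hence $q$-admissible), up to a subsequence $u_n \rightharpoonup u$ weakly in $\mathcal{D}^{1,2}_0(\Omega)$ and strongly in $L^q(\Omega)$, so $\|u\|_{L^q(\Omega)}=1$ and passing to the limit in the weak formulation of \eqref{autosalonebis} shows that $u$ is a first $q$-eigenfunction. By Proposition \ref{prop:Linfty} the sequence is uniformly bounded in $L^\infty$, and then standard elliptic regularity—which the Lipschitz assumption on each component is precisely designed to support—upgrades the convergence to $C^{1,\alpha}(\overline{\Omega_j})$ on each connected component $\Omega_j$.

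Next I would exploit the structure of $u$. By Proposition \ref{lm:constantsign} we may assume $u \ge 0$, and by the spin formula of Proposition \ref{prop:crucial} (with all spin coefficients $\delta_i=1$) $u$ is nontrivial on every connected component. The uniform interior ball condition then yields a Hopf boundary point lemma on each $\Omega_j$: $u>0$ inside and $\partial u/\partial\nu<0$ on $\partial\Omega_j$. By the $C^1$ convergence up to the boundary, for $n$ large $u_n$ inherits both properties, so $u_n\ge 0$ throughout $\Omega$ and is nontrivial on every component. Applying Theorem \ref{teo:positive} componentwise, $\lambda_n$ must coincide with $\lambda_1(\Omega_j;q)$ simultaneously for every $j$, which pins $\lambda_n$ to a fixed value that, again by Proposition \ref{prop:crucial} (applied with a single nontrivial spin), strictly exceeds $\lambda_1(\Omega;q)$. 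This contradicts $\lambda_n \to \lambda_1(\Omega;q)$. When $\Omega$ is connected the contradiction is even more immediate, since Theorem \ref{teo:positive} directly gives $\lambda_n=\lambda_1(\Omega;q)$.

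The hardest step is the upgrade from weak to strong convergence with control of the sign of $u_n$ right up to $\partial\Omega$. Plain $L^q$ or even $C^0$ convergence is insufficient: since $u$ vanishes on $\partial\Omega$, the functions $u_n$ could in principle dip below zero in a thin boundary layer while still converging to $u$. It is exactly the coupling of the Lipschitz boundary (providing $C^{1,\alpha}$ regularity up to $\partial\Omega$) with the uniform interior ball condition (providing a genuine Hopf lemma for $u$) that closes this gap; relaxing either geometric hypothesis should destroy the isolation property, consistently with the counter-examples discussed in the next subsection.
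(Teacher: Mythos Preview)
The paper does not supply its own proof of this theorem; it merely quotes the result from the forthcoming reference \cite{BDF}. So there is nothing to compare your argument against, and I will comment on the proposal itself.

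Your overall architecture is the natural one and is almost certainly the intended one: assume $\lambda_n\searrow\lambda_1(\Omega;q)$, pass to a limit first $q$-eigenfunction $u$, show the convergence is good enough to force each $u_n$ to be nonnegative and nontrivial on every connected component, and then conclude via the classification of constant-sign eigenfunctions. Two points deserve correction.

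\textbf{The endgame is misstated.} You write that Theorem \ref{teo:positive} applied on each component forces ``$\lambda_n=\lambda_1(\Omega_j;q)$ for every $j$'' and that this value ``strictly exceeds $\lambda_1(\Omega;q)$''. Neither is quite right. The restriction $u_n|_{\Omega_j}$ is a $q$-eigenfunction on $\Omega_j$ with eigenvalue $\lambda_n\,\|u_n\|_{L^q(\Omega_j)}^{q-2}$, not $\lambda_n$ itself; Theorem \ref{teo:positive} then pins this rescaled quantity to $\lambda_1(\Omega_j;q)$. Feeding these identities, one for each $j$, into the spin formula with all $\delta_j=1$ yields precisely
\[
\lambda_n=\left[\sum_j \lambda_1(\Omega_j;q)^{\frac{q}{q-2}}\right]^{\frac{q-2}{q}}=\lambda_1(\Omega;q),
\]
which contradicts $\lambda_n>\lambda_1(\Omega;q)$. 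So the fixed value is $\lambda_1(\Omega;q)$, not something above it, and the contradiction is immediate.

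\textbf{The regularity step needs more care.} You assert that a Lipschitz boundary yields $C^{1,\alpha}(\overline{\Omega_j})$ regularity, and hence $C^1$ convergence up to $\partial\Omega_j$. This is not true in general: a square satisfies both hypotheses (Lipschitz and uniform interior ball), yet solutions of $-\Delta v=f$ with bounded $f$ need not be $C^1$ at the corners. You correctly identify this boundary step as the crux, but the tool you name does not exist in that form. What the hypotheses actually buy is: (i) the uniform interior ball gives a Hopf barrier, hence $u\ge c\,\mathrm{dist}(\,\cdot\,,\partial\Omega)$; (ii) one then needs an argument---typically a comparison/barrier estimate of the type $|u_n-u|\le o(1)\cdot\mathrm{dist}(\,\cdot\,,\partial\Omega)$, or a uniform boundary Harnack principle for the quotients $u_n/u$---to transfer positivity to $u_n$ near $\partial\Omega$. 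Uniform interior $C^0$ (even $C^{0,\alpha}$) convergence handles the bulk, but the thin boundary layer has to be closed by such a barrier argument rather than by global $C^{1,\alpha}$ regularity. This is presumably the content of the cited unpublished work, and it is the step where the sublinearity $q<2$ makes the standard ``uniform lower bound on the nodal set'' argument (used for $q=2$) break down.
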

\begin{oss}
Observe that the infimum above is actually a minimum, due to the closedness of the spectrum.
\end{oss}

\subsection{Counter-examples}
In general, for $1<q<2$ the set $\mathrm{Spec}(\Omega;q)$ is not discrete. This is the content of the next example, taken from \cite{BF}.
\begin{exa}[The spectrum may not be discrete]
\label{exa:1}
Let $1<q<2$ and $0<r\le R$. We take two disjoint balls $B_r(x_0)$ and $B_R(y_0)$  and set 
\[
\mathcal{B}=B_R(x_0)\cup B_r(y_0).
\] 
Then
\begin{equation}
\label{diversi!}
\mathrm{Spec}_{LS}(\mathcal{B};q)\not =\mathrm{Spec}(\mathcal{B};q).
\end{equation}
Moreover, the set $\mathrm{Spec}(\mathcal{B};q)$ has (at least) countably many accumulation points.
\end{exa}
\begin{proof}
This is based on the ``spin formula'' \eqref{rap_gen}. By using this, we can show that every variational $q-$variational eigenvalue of $B_r(x_0)$ or $B_r(x_0)$ is actually an accumulation point for the $q-$spectrum. Indeed, take for example the $k-$th variational $q-$eigenvalue
\[
\lambda_{k,LS}(B_R(x_0);q),
\]
defined in \eqref{CFWLS}.
By Remark \ref{oss:pezzi}, we know that $\lambda_{k,LS}(B_R(x_0);q)\in\mathrm{Spec}(\mathcal{B};q)$. We now take the sequence 
\[
\Lambda_{n,k}=\left[\displaystyle\left(\frac{1}{\lambda_{k,LS}(B_R(x_0);q)}\right)^\frac{q}{2-q}+\left(\frac{1}{\lambda_{n,LS}(B_r(y_0);q)}\right)^\frac{q}{2-q}\right]^\frac{q-2}{q}.
\]
By formula \eqref{rap_gen}, we know that this is a $q-$eigenvalue of $\mathcal{B}$. Moreover, by using that $\lambda_{n,LS}(B_r(y_0);q)$ diverges to $+\infty$ and that\footnote{Here we use that $1<q<2$. For $2<q<2^*$. we would have
\[
\lim_{s\to 0} \left(t^\frac{q}{2-q}+s^\frac{q}{2-q}\right)^\frac{q-2}{q}=+\infty.
\]}
\[
\lim_{s\to 0} \left(t^\frac{q}{2-q}+s^\frac{q}{2-q}\right)^\frac{q-2}{q}=t,
\]
we get
\[
\lim_{n\to\infty}\Lambda_{n,k}=\lambda_{k,LS}(B_R(x_0);q),
\]
as desired.
\end{proof}
We have seen in Theorem \ref{teo:isolation} that the first $q-$eigenvalue is isolated for $1<q<2$, provided that the set has a finite number of smooth connected components. If we drop the restriction on the number of connected components, the isolation fails. This example is taken from \cite{BF}, as well.
\begin{exa}[The first eigenvalue may not be isolated]
\label{exa:2}
Let $1<q<2$ and let $\{r_i\}_{i\in\mathbb{N}}\subset \mathbb{R}$ be a sequence of strictly positive numbers, such that
\begin{equation}
\label{compatto}
\sum_{i=0}^\infty r_i^{N+\frac{2\,q}{2-q}}<+\infty.
\end{equation}
We take a sequence of points $\{x_i\}_{i\in\mathbb{N}}\subset\mathbb{R}^N$ such that the balls $B_{r_i}(x_i)$ are pairwise disjoint. Accordingly, we set
\[
\mathcal{T}=\bigcup_{i=0}^\infty B_{r_i}(x_i).
\]
Then 
\[
\mathrm{Spec}_{LS}(\mathcal{T};q)\not =\mathrm{Spec}(\mathcal{T};q).
\]
and the set $\mathrm{Spec}(\mathcal{T};q)$ has (at least) countably many accumulation points. Moreover, the first eigenvalue $\lambda_1(\mathcal{T};q)$ is not isolated. 
\end{exa}
\begin{proof}
The hypothesis \eqref{compatto} guarantees that the embedding $\mathcal{D}^{1,2}_0(\mathcal{T})\hookrightarrow L^q(\mathcal{T})$ is compact, see \cite[Example 5.2]{braruf}.
The first part of the statement is exactly as in the previous example. Let us prove that the first eigenvalue is not isolated.
By the ``spin formula'' \eqref{rap_gen}, we know that
\begin{equation}
\label{primospin}
\lambda_1(\mathcal{T};q)=\left[\displaystyle\sum_{i=1}^\infty\left(\frac{\delta_i}{\lambda_i}\right)^\frac{q}{2-q}\right]^\frac{q-2}{q}\qquad \mbox{ for some $q-$eigenvalue $\lambda_i$ of $B_{r_i}(x_i)$},
\end{equation}
and some $\delta_i$ such that
\[
\delta_i\in\{0,1\}\qquad \mbox{ and }\qquad \sum_{i=1}^\infty\delta_i\not =0.
\]
We now observe that for every $n\in\mathbb{N}\setminus\{0\}$ the function 
\begin{equation}
\label{funzioncina}
(t_1,\dots,t_n)\mapsto \left(\frac{1}{t_1^\frac{q}{2-q}+\dots+t_n^\frac{q}{2-q}}\right)^\frac{2-q}{q},\qquad t_1,\dots,t_n>0,
\end{equation}
is monotone decreasing with respect to each variable. Since $\lambda_1(\mathcal{T};q)$ has to be the smallest eigenvalue, this means that we must take 
\[
\delta_i=1\qquad \mbox{ and }\qquad \lambda_i=\lambda_1(B_{r_i}(x_i);q),\qquad \mbox{ for every } i\in\mathbb{N},
\]
in order to make \eqref{primospin} as small as possible\footnote{Here we crucially use that $1<q<2$. For $2<q<2^*$, the function \eqref{funzioncina} can be written as 
\[
(t_1,\dots,t_n)\mapsto \left(\frac{1}{t_1^\frac{q}{q-2}}+\dots+\frac{1}{t_n^\frac{q}{q-2}}\right)^\frac{q-2}{q},\qquad t_1,\dots,t_n>0,
\]
thus in order to make \eqref{primospin} as small as possible, we have to take all $\delta_i=0$ except one (this corresponds to let all $t_i$ goes to $+\infty$, except one). For this reason, this example does not work for $2<q<2^*$.}.
Thus we have 
\[
\lambda_1(\mathcal{T};q)=\left[\displaystyle\sum_{i=1}^\infty\left(\frac{1}{\lambda_1(B_{r_i}(x_i);q)}\right)^\frac{q}{2-q}\right]^\frac{q-2}{q}.
\]
In other words, any first $q-$eigenfunction of $\mathcal{T}$ must be supported on the whole set $\mathcal{T}$.
On the other hand, still by the ``spin formula'' we have that 
\begin{equation}
\label{sequence}
\Lambda_k=\left[\displaystyle\sum_{i=1}^k\left(\frac{1}{\lambda_1(B_{r_i}(x_i);q)}\right)^\frac{q}{2-q}\right]^\frac{q-2}{q}>\lambda_1(\mathcal{T};q),
\end{equation}
is a $q-$eigenvalue of $\mathcal{T}$. By observing that 
\[
\lim_{k\to\infty} \Lambda_k=\lambda_1(\mathcal{T};q),
\]
we get the desired conclusion.
\end{proof}

\begin{oss}
\label{oss:dopoesempi}
In the previous examples, we took for simplicity disjoint unions of balls. Of course, the very same examples work by taking disjoint unions of generic open bounded sets. Also observe that the previous examples work for dimension $N=1$, as well. This implies that \cite[Theorem II]{Ot} and \cite[Theorems 3.1 and 4.1]{DM} fail to be true if $\Omega\subset\mathbb{R}$ is a disjoint union of intervals. Thus, even in dimension $N=1$, we have examples of sets such that
\[
\mathrm{Spec}(\Omega;q) \mbox{ is not discrete }\qquad \mbox{ and }\qquad \mathrm{Spec}(\Omega;q)\not=\mathrm{Spec}_{LS}(\Omega;q),
\]
for $1<q<2$.
\end{oss}

\subsection{Open problems} We list here some questions for the case $1<q<2$ which, to the best of our knowledge, are open.

\begin{open}
On a ``good'' open set $\Omega\subset\mathbb{R}^N$, the $q-$spectrum is discrete and 
\[
\mathrm{Spec}(\Omega;q)=\mathrm{Spec}_{LS}(\Omega;q).
\]
\end{open}

\begin{open}
Whenever $\lambda_1(\Omega;q)$ is isolated, find a variational characterization of the second eigenvalue
\[
\inf\{\lambda \in\mathrm{Spec}(\Omega;q)\, :\, \lambda>\lambda_{1}(\Omega;q)\}.
\]
Does this coincide with $\lambda_{2,LS}(\Omega;q)$ defined in \eqref{CFWLS}?
\end{open}

\section{The super-homogeneous case $2<q<2^*$}
\label{sec:4}

\subsection{Results}
In this case, the situation for the first $q-$eigenvalue abruptly changes. As we will see, Theorems \ref{teo:simplicity} and \ref{teo:positive} do not hold anymore.

\begin{teo}[Simplicity in a ball]
\label{teo:simpleball}
Let $R>0$, then for every $2<q<2^*$ the first eigenvalue $\lambda_1(B_R(0);q)$ is simple.
\end{teo}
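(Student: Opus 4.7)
The plan is to reduce the problem, via symmetry, to a one-dimensional shooting problem for the radial Lane-Emden ODE, whose positive Dirichlet solutions on a fixed interval are classified by a scaling relation that is strictly monotone precisely because $q>2$.

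First I would invoke Proposition \ref{lm:constantsign} to assume $u>0$ in $B_R(0)$ and normalize $\|u\|_{L^q(B_R(0))}=1$, so that any first $q$-eigenfunction solves
\[
-\Delta u=\lambda_{1}(B_R(0);q)\,u^{q-1}\quad\text{in }B_R(0),\qquad u=0\text{ on }\partial B_R(0).
\]
By Proposition \ref{prop:Linfty} one has $u\in L^\infty$, and since $q>2$ the map $t\mapsto t^{q-1}$ is of class $C^1$ on $[0,+\infty)$; standard elliptic bootstrapping then gives $u\in C^{2,\alpha}(B_R(0))\cap C(\overline{B_R(0)})$, and the Gidas-Ni-Nirenberg moving plane method applies and forces $u$ to be radial and strictly radially decreasing.

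The PDE then becomes the singular ODE
\[
w''+\frac{N-1}{r}\,w'+\lambda_1\,w^{q-1}=0,\qquad w(0)=a>0,\ w'(0)=0,
\]
with $w>0$ on $[0,R)$ and $w(R)=0$, and it suffices to prove that the shooting map $a\mapsto T(a)$, associating to $a>0$ the first positive zero of the corresponding Cauchy solution, is injective. The pure power structure yields the scaling identity
\[
w_a(r)=a\,w_1\!\bigl(\sqrt{\lambda_1}\,a^{(q-2)/2}\,r\bigr),
\]
where $w_1$ is the normalized profile solving the same ODE with $\lambda_1$ replaced by $1$, subject to $w_1(0)=1$, $w_1'(0)=0$. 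Calling $s_0$ its first positive zero (whose existence is guaranteed by the mere fact that at least one positive first eigenfunction exists, by inverse scaling), one obtains
\[
T(a)=\frac{s_0}{\sqrt{\lambda_1}\,a^{(q-2)/2}},
\]
which is strictly monotone in $a$ precisely because $q>2$. Hence $T(a)=R$ has a unique solution $a=a^{*}$, giving simplicity.

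The main obstacle will be invoking the moving plane method with the minimal regularity available: one must be careful about the behaviour of $u$ up to the boundary, and about the differentiability of $u^{q-1}$ near $u=0$. For $q>2$, however, the nonlinearity is locally Lipschitz (in fact $C^1$) at the origin, and refinements of the original Gidas-Ni-Nirenberg argument, for instance those of Berestycki-Nirenberg, cover this setting. Once radiality is established, the monotonicity of the shooting map comes essentially for free from the scale invariance of the pure power equation.
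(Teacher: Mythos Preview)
Your argument is correct and follows a genuinely different route from the paper's. For the symmetry step, the paper uses the P\'olya--Szeg\H{o} principle together with the Brothers--Ziemer characterization of equality cases (which in turn requires an estimate on the measure of the critical set, handled by a separate lemma); you instead invoke Gidas--Ni--Nirenberg, an alternative the paper itself acknowledges in a remark. For the ODE step, the paper uses the Rellich--Pohozaev identity to pin down the boundary data $(u(R),u'(R))$ as universal constants and then runs a backward Gronwall-type uniqueness argument from $r=R$; you shoot forward from $r=0$ and exploit the scaling invariance of the pure power to compute the first zero $T(a)$ explicitly as a strictly monotone function of the central value $a$, which works precisely because $q>2$. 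Your approach is shorter and makes the role of the exponent transparent; the paper's backward argument avoids the (mild) singularity of the ODE at $r=0$ and fits the more variational style of the rest of the paper, in particular its stated aim of generalizing to the $p$-Laplacian. One point you leave implicit is the well-posedness of the singular Cauchy problem at $r=0$ (so that $a\mapsto T(a)$ is single-valued), but this is standard once the nonlinearity $t\mapsto t^{q-1}$ is locally Lipschitz, as it is here.
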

\begin{proof}
We take $U$ a first $q-$eigenfunction, with unit $L^q$ norm. Thanks to Proposition \ref{lm:constantsign}, we can suppose that $U\ge 0$. We now divide the proof into three steps.
\vskip.2cm\noindent
{\bf Step 1: reduction to radial functions.} Here we use the same argument of \cite[Theorem 3, point a)]{Ka}.
We consider the {\it radially symmetric decreasing rearrangement} $U^*$ of $U$. This is the unique radially symmetric function such that
\[
|\{x\in B_R(0)\, :\, U(x)>t\}|=|\{x\in B_R(0)\, :\, U^*(x)>t\}|.
\]
This in particular implies that
\[
1=\int_{B_R(0)} U^q\,dx=\int_{B_R(0)} (U^*)^q\,dx.
\]
By using the celebrated {\it P\'olya-Szeg\H{o} principle}, we know that $U^*\in\mathcal{D}^{1,2}_0(\Omega)$ and that 
\[
\lambda_1(B_R(0);q)=\int_{B_R(0)} |\nabla U|^2\,dx\ge \int_{B_R(0)} |\nabla U^*|^2\,dx.
\]
The last two displays shows that $U^*$ is still a first $q-$eigenfunction, thus actually
\[
\int_{B_R(0)} |\nabla U|^2\,dx= \int_{B_R(0)} |\nabla U^*|^2\,dx.
\]
We now want to appeal to the characterization of equality cases in the P\'olya-Szeg\H{o} principle. For this, we observe that by Hopf's boundary Lemma, there exists $0<r<R$ such that
\[
|\nabla U|\ge c>0,\qquad \mbox{ for }r<|x|<R.
\]
This shows that 
\[
\{x\in B_R(0)\, :\, |\nabla U(x)|=0\}\Subset B_R(0).
\]
Moreover, by Lemma \ref{lm:criticalset}, we have that $|\nabla U|\not =0$ almost everywhere in $B_R(0)$. In conclusion, we obtain that
\[
|\{x\in B_R(0)\, :\, |\nabla U(x)|=0\}|=0.
\]
We can now use \cite[Theorem 1.1]{BZ} to infer that $U=U^*$. Thus, any positive first $q-$eigenfunction must be radially simmetric decreasing.
\vskip.2cm\noindent
{\bf Step 2: reduction to a Cauchy problem.}
We now know that any first $q-$eigenfunction $U$ with unit $L^q$ norm has the form $U(x)=u(|x|)$. By using spherical coordinates, the function $u$ must solve the one-dimensional problem 
\[
\eta_{q}(R)=\min_{\varphi\in W^{1,2}((0,R))} \left\{\int_0^R |\varphi'|^2\,\varrho^{N-1}\,d\varrho\, :\, \int_0^R |\varphi|^q\,\varrho^{N-1}\,d\varrho=\frac{1}{N\,\omega_N},\ \varphi(R)=0\right\}.
\]
Moreover, it holds 
\[
\lambda_1(B_R(0);q)=\omega_N^\frac{q-2}{q}\,\eta_q(R).
\]
We are thus lead to show that the previous one-dimensional problem has a unique positive minimizer. By using the Rellich-Pohozaev identity (Lemma \ref{lm:RP}), we have 
\[
\lambda_1(B_R(0);q)=C_{q,N}\,R\,\int_{\partial B_R(0)} |\nabla U|^2\,d\mathcal{H}^{N-1}.
\] 
Since $U(x)=u(|x|)$ is radially symmetric and decreasing, this implies that 
\[
u'(R)=\sqrt{\frac{\lambda_1(B_R(0);q)}{N\,\omega_N\,R^N}\,\frac{1}{C_{q,N}}}=:\mathcal{C}.
\]
Observe that the last is a universal constant, in the sense that it does not depend on $u$.
Thus $u$ must be a positive solution of the following ``backward'' Cauchy problem
\begin{equation}
\label{cauchy}
\left\{\begin{array}{rcll}
-(\varrho^{N-1}\,u')'&=&\Big(\omega_N^\frac{q-2}{q}\,\eta_q(R)\Big)\,\varrho^{N-1}\,u^{q-1},& \mbox{ in } (0,R)\\
u(R)&=&0\\
u'(R)&=&\mathcal{C}.
\end{array}
\right.
\end{equation}
{\bf Step 3: uniqueness for the Cauchy problem.}
We claim that \eqref{cauchy} has a unique positive solution. In order to prove this, we adapt the argument of \cite[Lemma 3.3]{FL}. Thus, we first observe that $u$ is a solution of \eqref{cauchy} if and only if 
\[
u(\varrho)=-\int_\varrho^R \frac{\mathcal{C}\,R^{N-1}}{t^{N-1}}\,dt- \int_\varrho^R \frac{1}{t^{N-1}}\,\left(\int_t^R\Big(\omega_N^\frac{q-2}{q}\,\eta_q(R)\Big)\,\tau^{N-1}\,u^{q-1}(\tau)\,d\tau \right)\,dt.
\]
Let us now suppose that $u_1$ and $u_2$ are two distinct positive solutions of \eqref{cauchy}. We thus get
\begin{equation}
\label{stimabase}
|u_1(\varrho)-u_2(\varrho)|\le \int_\varrho^R \frac{\omega_N^\frac{q-2}{q}\,\eta_q(R)}{t^{N-1}}\left(\int_t^R \tau^{N-1}\,|u_1^{q-1}(\tau)-u_2^{q-1}(\tau)|\,d\tau\right)\,dt.
\end{equation}
By using that 
\[
|a^{q-1}-b^{q-1}|\le (q-1)\,(a^{q-2}+b^{q-2})\,|a-b|,
\]
and recalling the uniform $L^\infty$ estimate for $q-$eigenfunctions (i.e. Proposition \ref{prop:Linfty}), for every $0<r<R$ we get from the previous estimate
\begin{equation}
\label{annullati}
\|u_1-u_2\|_{L^\infty([r,R])}\le C\,\|u_1-u_2\|_{L^\infty([r,R])}\,\int_r^R \frac{1}{t^{N-1}}\left(\int_t^R \tau^{N-1}\,d\tau\right)\,dt,
\end{equation}
where $C>0$ is a uniform constant. We now observe that 
\[
\lim_{r\to R}\int_r^R \frac{1}{t^{N-1}}\left(\int_t^R \tau^{N-1}\,d\tau\right)\,dt=0,
\]
thus, by choosing $r$ sufficiently close to $R$, we can have 
\[
C\,\int_r^R \frac{1}{t^{N-1}}\left(\int_t^R \tau^{N-1}\,d\tau\right)\,dt\le \frac{1}{2}.
\]
By using this in \eqref{annullati}, we get that $u_1=u_2$ in $[r,R]$, for $R-r$ small enough. We now set
\[
r_0=\inf\Big\{r\in (0,R)\, :\, u_1=u_2 \mbox{ on } [r,R]\Big\}.
\]
By the previous argument, we know that $r_0<R$. We assume by contradiction that $r_0>0$. Thus from \eqref{stimabase} we get for every $0<\varrho\le r_0$
\[
|u_1(\varrho)-u_2(\varrho)|\le \int_\varrho^{r_0} \frac{\omega_N^\frac{q-2}{q}\,\eta_q(R)}{t^{N-1}}\left(\int_t^{r_0} \tau^{N-1}\,|u_1^{q-1}(\tau)-u_2^{q-1}(\tau)|\,d\tau\right)\,dt,
\]
where we used that $u_1(\tau)=u_2(\tau)$ for $\tau\in[r_0,R]$. By choosing $r<r_0$, taking the supremum over the interval $[r,r_0]$ and proceeding as before, we get
\[
\|u_1-u_2\|_{L^\infty([r,r_0])}\le C\,\|u_1-u_2\|_{L^\infty([r,r_0])}\,\int_r^{r_0} \frac{1}{t^{N-1}}\left(\int_t^{r_0} \tau^{N-1}\,d\tau\right)\,dt.
\]
By taking $r-r_0$ sufficiently small, we can then claim that there exists $r<r_0$ such that $u_1=u_2$ on $[r,R]$. This violates the definition of $r_0$, thus we get $r_0$, as desired. This finally proves that \eqref{cauchy} has a unique solution.
\par
Thus the proof of the theorem is complete.
\end{proof}
\begin{oss}
One could also use the classical symmetry result \cite[Theorem 1]{GNN} by Gidas, Ni and Nirenberg, to achieve {\bf Step 1} of the previous proof. Here we preferred to stick to a more variational argument. 
\par
The previous result can be found in \cite[Theorem 2 and Corollary 1]{KL}. However, the proof there is slightly different: namely, in order to show uniqueness for the relevant ODE, the authors in \cite{KL} use a Kelvin--type transform.
\end{oss}
The following result is due to Lin, see \cite[Lemma 3]{Lin}. For completeness, we provide a proof, slightly amended with respect to the original one. To the best of our knowledge, this is the best known results for general sets.
\begin{prop}[Simplicity for general sets]
\label{prop:lin}
Let $\Omega\subset\mathbb{R}^N$ be a $2-$admissible open connected set. Then there exists $2<q_0<2^*$ such that $\lambda_1(\Omega;q)$ is simple for every $2<q<q_0$.
\end{prop}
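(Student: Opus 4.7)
I will argue by contradiction, perturbing away from the classical setting $q=2$, where simplicity of the first Dirichlet eigenvalue on a connected $\Omega$ is well-known. Suppose no such $q_0$ exists: then there is a sequence $q_n \to 2^+$ and, for each $n$, two distinct first $q_n$-eigenfunctions $u_n,v_n$ which, by Proposition~\ref{lm:constantsign}, may be chosen positive with $\|u_n\|_{L^{q_n}(\Omega)} = \|v_n\|_{L^{q_n}(\Omega)} = 1$. They solve
\[
-\Delta u_n = \lambda_1(\Omega;q_n)\, u_n^{q_n-1},\qquad -\Delta v_n = \lambda_1(\Omega;q_n)\, v_n^{q_n-1},\qquad \text{in }\Omega.
\]
Since $\|\nabla u_n\|_{L^2}^2 = \lambda_1(\Omega;q_n)$, the continuity $\lambda_1(\Omega;q_n)\to \lambda_1(\Omega;2)$ (standard Rayleigh-quotient comparison, together with the uniform $L^\infty$ bound of Proposition~\ref{prop:Linfty} to pass from $L^{q_n}$- to $L^2$-normalization) yields, along a subsequence, $u_n\to u_*$ and $v_n\to v_*$ strongly in $\mathcal{D}^{1,2}_0(\Omega)$ and in $L^r(\Omega)$ for every finite $r$, where each limit is a non-negative first Dirichlet eigenfunction of $-\Delta$ with unit $L^2$-norm. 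Classical simplicity then forces $u_*=v_*$, i.e.\ both sequences converge to the unique positive normalized first eigenfunction $u_*$.

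\textbf{Linearization and normalization.} Set $w_n := u_n-v_n\not\equiv 0$. The mean value theorem gives $u_n^{q_n-1}-v_n^{q_n-1} = a_n(x)\, w_n$ with
\[
a_n(x) := (q_n-1)\int_0^1 \bigl(t\,u_n + (1-t)\,v_n\bigr)^{q_n-2}\, dt,
\]
so subtracting the eigenvalue equations yields $-\Delta w_n = \lambda_1(\Omega;q_n)\, a_n\, w_n$. Because $q_n-2>0$ and $u_n+v_n$ is uniformly bounded in $L^\infty$ (Proposition~\ref{prop:Linfty}), $a_n$ is uniformly bounded; since $u_* > 0$ a.e.\ in $\Omega$ by the strong maximum principle, $a_n \to 1$ pointwise a.e. Now normalize $\tilde w_n := w_n/\|w_n\|_{L^2(\Omega)}$; the linearized equation yields $\|\nabla \tilde w_n\|_{L^2}^2 \leq C$, so by 2-admissibility $\tilde w_n \to \tilde w$ strongly in $L^2(\Omega)$ along a subsequence, with $\|\tilde w\|_{L^2}=1$. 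Splitting $\int a_n\,\tilde w_n\, \phi\, dx = \int (a_n-1)\,\tilde w_n\, \phi\, dx + \int \tilde w_n\, \phi\, dx$, and using that $(a_n-1)\phi\to 0$ in $L^2$ for every $\phi\in C^\infty_0(\Omega)$, one passes to the weak limit to obtain $-\Delta \tilde w = \lambda_1(\Omega)\,\tilde w$. Hence $\tilde w = c\, u_*$ for some $c\in\mathbb{R}$.

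\textbf{Orthogonality and contradiction.} To kill the constant $c$, exploit the shared $L^{q_n}$-normalization: by the mean value theorem again,
\[
0 = \int_\Omega \bigl(u_n^{q_n} - v_n^{q_n}\bigr)\,dx = q_n \int_\Omega b_n(x)\, w_n(x)\, dx,\qquad b_n(x) := \int_0^1 \bigl(t\, u_n + (1-t)\, v_n\bigr)^{q_n-1}\, dt.
\]
Dividing by $\|w_n\|_{L^2}$ and passing to the limit, with $b_n\to u_*$ in $L^2(\Omega)$ by dominated convergence (using the uniform $L^\infty$ bound and the a.e.\ convergence $u_n,v_n\to u_*$), gives $\int_\Omega u_*\,\tilde w\, dx = 0$. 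Combined with $\tilde w = c\, u_*$, this forces $c=0$, hence $\tilde w \equiv 0$, contradicting $\|\tilde w\|_{L^2}=1$.

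\textbf{Main obstacle.} The delicate step is the passage to the limit in the linearized equation: although $q_n-2>0$ keeps $a_n$ bounded even where $u_n+v_n$ is small, one must justify $a_n\to 1$ in a sense strong enough to pair with $\tilde w_n$. This rests on the combination of the scale-invariant $L^\infty$ bound of Proposition~\ref{prop:Linfty} with the strong maximum principle to guarantee $u_*>0$ a.e., so that the almost-everywhere convergence $a_n\to 1$ does not degenerate on a set of positive measure. A secondary (and essentially technical) point is the continuity of $q\mapsto \lambda_1(\Omega;q)$ at $q=2$ for $2$-admissible sets, needed both to control $\lambda_1(\Omega;q_n)$ and to identify the limit equation; this is standard but must be verified under the bare $2$-admissibility hypothesis, without assuming $|\Omega|<\infty$.
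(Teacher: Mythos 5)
Your proposal is correct and follows the same overall strategy as the paper: argue by contradiction along a sequence $q_n\to 2^+$, normalize and converge $u_n,v_n$ to the unique normalized first Dirichlet eigenfunction $u_*$, subtract the two Lane--Emden equations via the mean-value theorem to obtain a linear weighted equation for $\phi_n=(u_n-v_n)/\|u_n-v_n\|_{L^2}$, and pass to the limit to conclude that $\phi_n\to\phi$ with $-\Delta\phi=\lambda_1(\Omega)\,\phi$ and $\|\phi\|_{L^2}=1$, hence $\phi=\pm u_*$ by simplicity at $q=2$. Up to this point your write-up and the paper's coincide essentially line by line (the paper's weight $w_n$ is your $a_n$).

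Where you genuinely diverge is in the final contradiction. The paper observes that, since both $u_n$ and $v_n$ have unit $L^{q_n}$-norm, $\phi_n$ must change sign; it then tests the linearized equation with $\phi_n^{\pm}$ and combines the resulting energy bound with a Faber--Krahn/Poincar\'e estimate to get a uniform lower bound on $|\{\phi_n>0\}|$ and $|\{\phi_n<0\}|$, contradicting convergence of $\phi_n$ to a fixed-sign limit. You instead exploit the same shared normalization but at the level of the constraint rather than the equation: from $\int u_n^{q_n}=\int v_n^{q_n}$ and the mean-value theorem you extract the exact orthogonality $\int_\Omega b_n\,\phi_n\,dx=0$, which passes to the limit to give $\int_\Omega u_*\,\phi\,dx=0$, incompatible with $\phi=\pm u_*$. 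This is a clean and somewhat more direct conclusion: it avoids the nodal/measure-theoretic step entirely and replaces a soft ``must change sign'' observation by a quantitative orthogonality identity. In exchange, it requires one extra convergence, namely $b_n\to u_*$ in (at least weak) $L^2$; your appeal to dominated convergence should be refined slightly, since $2$-admissibility does not force $|\Omega|<\infty$ and a constant $L^\infty$ bound on $b_n$ is therefore not a valid dominating function. The fix is easy: note $b_n\le M^{\,q_n-2}(u_n+v_n)$ with $M:=\sup_n(\|u_n\|_\infty+\|v_n\|_\infty)$, so $b_n$ is bounded in $L^2(\Omega)$ and converges a.e.\ to $u_*$, hence $b_n\rightharpoonup u_*$ weakly in $L^2(\Omega)$; together with the strong $L^2$ convergence of $\phi_n$ this already gives $\int b_n\,\phi_n\to\int u_*\,\phi$. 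With that small repair the argument is complete and correct.
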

\begin{proof}
The proof exploits a contradiction argument. We assume that for every $2<q<2^*$, the first $q-$eigenvalue is not simple. Thus the problem 
\begin{equation}
\label{eq:2q}
\lambda_1(\Omega;q)=\min_{\varphi\in\mathcal{D}^{1,2}_0(\Omega)}\left\{\int_\Omega |\nabla \varphi|^2\,dx\, :\, \int_\Omega |\varphi|^q\,dx=1\right\},
\end{equation}
always admits (at least) two linearly independent solutions, which can be taken to be positive by Proposition \ref{lm:constantsign}. We take a decreasing sequence $\{q_n\}_{n\in\mathbb{N}}\subset (2,+\infty)$ such that 
\[
\lim_{n\to\infty} q_n=2.
\]
Correspondingly, there exist two distinct positive solutions of \eqref{eq:2q}. We call them $u_n$ and $v_n$, while denoting for simplicity
\[
\lambda_{q_n}=\lambda_1(\Omega;q_n).
\] 
We recall that (see for example \cite[Lemma 2.1]{BraBus})
\[
\lim_{n\to\infty} \int_\Omega |\nabla u_n|^2\,dx=\lim_{n\to\infty}\int_\Omega |\nabla v_n|^2\,dx=\lim_{n\to \infty} \lambda_{1}(\Omega;q_n)=\lambda_1(\Omega)=\int_\Omega |\nabla u|^2\,dx,
\]
where $u\in \mathcal{D}^{1,2}_0(\Omega)$ is the unique first positive eigenfunction of the Dirichlet-Laplacian, with unit $L^2(\Omega)$ norm. 
Then it is not difficult to see that
\begin{equation}
\label{convergenzaH1}
\lim_{n\to\infty} \|\nabla u_n-\nabla u\|_{L^2(\Omega)}=\lim_{n\to\infty} \|\nabla v_n-\nabla u\|_{L^2(\Omega)}=0.
\end{equation}
We also observe that, thanks to Proposition \ref{prop:Linfty}, we can assume 
\begin{equation}
\label{limitate}
\|u_n\|_{L^\infty(\Omega)}+\|v_n\|_{L^\infty(\Omega)}\le C,\qquad \mbox{ for every }n\in\mathbb{N}.
\end{equation}
This entails that for every $2<\gamma<+\infty$
\[
\|u_n-u\|_{L^\gamma(\Omega)}\le \|u_n-u\|_{L^\infty}^{1-\frac{2}{\gamma}}\,\|u_n-u\|_{L^2(\Omega)}^\frac{2}{\gamma}\le C\,\|u_n-u\|_{L^2(\Omega)}^\frac{2}{\gamma},
\]
and, similarly
\[
\|v_n-u\|_{L^\gamma(\Omega)}\le \|v_n-u\|_{L^\infty}^{1-\frac{2}{\gamma}}\,\|v_n-u\|_{L^2(\Omega)}^\frac{2}{\gamma}\le C\,\|v_n-u\|_{L^2(\Omega)}^\frac{2}{\gamma}.
\]
Thus from \eqref{convergenzaH1} and Poincar\'e inequality, we also get
\[
\lim_{n\to\infty} \|u_n-u\|_{L^\gamma(\Omega)}=\lim_{n\to\infty} \|v_n-u\|_{L^\gamma(\Omega)}=0,\qquad \mbox{ for every } 2\le \gamma<+\infty.
\]
By subtracting the two equations
\[
\int_\Omega \langle \nabla u_n,\nabla\varphi\rangle=\lambda_1(\Omega;q_n)\,\int_\Omega u_n^{q_n-1}\,\varphi\,dx,
\]
and
\[
\int_\Omega\langle \nabla v_n,\nabla\varphi\rangle=\lambda_1(\Omega;q_n)\,\int_\Omega v_n^{q_n-1}\,\varphi\,dx,
\]
we get
\[
\int_\Omega\langle \nabla (u_n-v_n),\nabla\varphi\rangle=\lambda_{q_n}\,\int_\Omega (u_n^{q_1-1}-v_n^{q_n-1})\,\varphi\,dx,\qquad \mbox{ for } \varphi\in\mathcal{D}^{1,2}_0(\Omega).
\]
We now observe that for every $a,b\ge 0$ we have
\begin{equation}
\label{funzioni}
\begin{split}
a^{q_n-1}-b^{q_n-1}&=\int_0^1 \frac{d}{dt} (t\,a+(1-t)\,b)^{q_n-1}\,dt\\
&=(q_n-1)\,\left(\int_0^1 (t\,a+(1-t)\,b)^{q_n-2}\,dt\right)\,(a-b).
\end{split}
\end{equation}
We thus get
\begin{equation}
\label{almostlinear}
\begin{split}
\int_\Omega \langle \nabla (u_n-v_n),\nabla \varphi\rangle\,dx=\lambda_{q_n}\,\int_\Omega w_n\,(u_n-v_n)\,\varphi\,dx\\
\end{split}
\end{equation}
where 
\[
w_n(x)=(q_n-1)\,\int_0^1 (t\,u_n(x)+(1-t)\,v_n(x))^{q_n-2}\,dt.
\]
For every $n\in\mathbb{N}$ we set
\[
\phi_n=\frac{u_n-v_n}{\|u_n-v_n\|_{L^2(\Omega)}}\in \mathcal{D}^{1,2}_0(\Omega),
\]
then from \eqref{almostlinear} we get that $\varphi_n$ solves the following weighted linear eigenvalue problem
\begin{equation}
\label{linearizedlin}
\int_\Omega \langle \nabla\phi_n,\nabla \varphi\rangle\,dx=\lambda_{q_n}\,\int_\Omega w_n\,\phi_n\,\varphi\,dx,\qquad \mbox{ for } \varphi\in \mathcal{D}^{1,2}_0(\Omega). 
\end{equation}
Observe that, since both $u_n$ and $v_n$ have unit $L^q(\Omega)$ norm, we can not have $u_n\ge v_n$ or $u_n\le v_n$ in $\Omega$. Thus we must have
\[
|\Omega_n^+|:=|\{x\in\Omega\, :\, u_n(x)>v_n(x)\}|>0\quad \mbox{ and }\quad |\Omega_n^-|:=|\{x\in\Omega\, :\, u_n(x)<v_n(x)\}|>0. 
\] 
This entails that the function $\phi_n$ must change sign.
We now claim that 
\begin{equation}
\label{conv2}
\|w_n\|_{L^\infty(\Omega)}\le C,\qquad \mbox{ for every }n\in\mathbb{N},
\end{equation}
and 
\begin{equation}
\label{conv3}
w_n \mbox{ converges (up to a subequence) in $L^2_{\rm loc}(\Omega)$ to $1$}.
\end{equation}
The first fact follows from \eqref{limitate}. To prove the second fact, we take $\Omega'\Subset\Omega$ and observe that
\[
\begin{split}
\int_{\Omega'} |w_n-1|^2\,dx&=\int_{\Omega'} \left|\int_0^1 \Big[(q_n-1)\, (t\,u_n(x)+(1-t)\,v_n(x))^{q_n-2}-1\Big]\,dt\right|^2\,dx\\
&\le \int_0^1 \int_{\Omega'} \Big|(q_n-1)\, (t\,u_n(x)+(1-t)\,v_n(x))^{q_n-2}-1\Big|^2\,dx\,dt\\
&\le 2\,(q_n-2)^2\,\int_0^1\,\int_{\Omega'} \Big||t\,u_n(x)+(1-t)\,v_n(x)|^{q_n-2}\Big|^2\,dx\,dt\\
&+2\,\int_0^1\int_{\Omega'} \Big||t\,u_n(x)+(1-t)\,v_n(x)|^{q_n-2}-1\Big|^2\,dx\,dt\\
&\le C\,(q_n-2)^2+2\,\int_0^1\int_{\Omega'} \Big||t\,u_n(x)+(1-t)\,v_n(x)|^{q_n-2}-1\Big|^2\,dx\,dt.
\end{split}
\]
We now observe that 
\[
\Big||t\,u_n(x)+(1-t)\,v_n(x)|^{q_n-2}-1\Big|^2\le C\cdot 1_{\Omega'}\in L^1(\Omega'\times[0,1]),
\]
still by \eqref{limitate}. In addition, by possibly passing to a subsequence, we have
\[
\lim_{n\to\infty}\Big||t\,u_n(x)+(1-t)\,v_n(x)|^{q_n-2}-1\Big|^2=0,\qquad \mbox{ for a.\,e. } (t,x)\in [0,1]\times\Omega'.
\]
Thus \eqref{conv3} now follows by using the Dominated Convergence Theorem.
\par
By choosing $\varphi=\phi_n$ in \eqref{linearizedlin} and using \eqref{conv2}, we get
\[
\int_\Omega |\nabla \phi_n|^2\,dx=\lambda_{q_n}\,\int_\Omega w_n\,|\phi_n|^2\,dx\le C\,\int_\Omega |\phi_n|^2\,dx=C.
\]
This shows that $\{\phi_n\}_{n\in\mathbb{N}}$ is bounded in $\mathcal{D}^{1,2}_0(\Omega)$.
Then there exists $\phi\in \mathcal{D}^{1,2}_0(\Omega)$ such that $\phi_n$ converges (up to a subsequence) to $\phi$, weakly in $\mathcal{D}^{1,2}_0(\Omega)$ and strongly in $L^2(\Omega)$ (thanks to the fact that $\Omega$ is $2-$admissible). In particular, we have
\[
\|\phi\|_{L^2(\Omega)}=1,
\]
thus the limit $\phi$ is not trivial. If we take $\varphi\in C^\infty_0(\Omega)$ and use \eqref{conv3}, we can now pass to the limit in \eqref{linearizedlin} and obtain that $\phi$ is a weak solution of 
\[
\int_\Omega \langle \nabla \phi,\nabla\varphi\rangle\,dx=\lambda_1(\Omega)\,\int_\Omega \phi\,\varphi\,dx. 
\]
By recalling that $\lambda_1(\Omega)$ is simple and that $\phi\in\mathcal{D}^{1,2}_0(\Omega)$ has unit $L^2(\Omega)$ norm, we must have $\phi=u$ or $\phi=-u$. In particular, $\phi$ has constant sign. 
\par
On the other hand, by recalling that $\phi_n$ is sign-changing, we can test \eqref{linearizedlin} with $\phi_n^+$ and $\phi_n^-$. This gives
\[
\int_\Omega |\nabla \phi_n^\pm|^2\,dx=\lambda_{q_n}\int_\Omega w_n\,|\phi_n^\pm|^2\,dx\le C\,\int_\Omega |\phi_n^\pm|^2\,dx,
\]
thanks to the uniform $L^\infty$ estimate \eqref{conv2} on $w_n$.
By using Poincar\'e inequality, we get 
\[
|\Omega_n^\pm|^\frac{2}{N}\,\int_\Omega |\nabla\phi_n^\pm|^2\,dx\ge \frac{1}{C'}\,\int_{\Omega}\, |\phi_n^\pm|^2\,dx,
\]
which in turn implies
\[
\frac{1}{C\,C'} \le |\Omega_n^\pm|^\frac{2}{N},\qquad \mbox{ for } n\in\mathbb{N}.
\]
This contradicts the fact that $\phi_n$ converges to the constant sign function $\phi$.
\end{proof}
An abstract sufficient condition in order to infer simplicity of $\lambda_1(\Omega;q)$ is contained in the following result, which is due to Damascelli, Grossi and Pacella. 
\begin{teo}[Non-degeneracy implies simplicity]
\label{teo:nondegen}
Let $\Omega\subset\mathbb{R}^N$ be an open bounded set such that for every $2<q<2^*$, the following condition is satisfied: 
\par\vskip.2cm
$\bullet$ for every first positive $q-$eigenfunction $U$ with unit $L^q$ norm, we have
\begin{equation}
\label{mu2}
\mu_2:=\min_{\mathcal{F}\in\Sigma_2(\Omega)}\left\{\max_{\varphi\in\mathcal{F}} \left(\int_\Omega |\nabla \varphi|^2\,dx-(q-1)\,\lambda_1(\Omega;q)\,\int_\Omega U^{q-2}\,\varphi^2\,dx\right)\right\}>0,
\end{equation}
where 
\[
\Sigma_2(\Omega)=\Big\{\mathcal{F}=F\cap\mathcal{S}_2(\Omega)\,:\, F \mbox{ $m$-dimensional subspace of } \mathcal{D}^{1,2}_0(\Omega)\, :\, m\ge 2\Big\}.
\]
Then $\lambda_1(\Omega;q)$ is simple for every $2<q<2^*$.
\end{teo}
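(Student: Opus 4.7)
Plan:

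My approach is a continuation argument in the parameter $q$, starting from values just above $2$ where simplicity is granted by Proposition \ref{prop:lin}, and using the Implicit Function Theorem (IFT). The hypothesis $\mu_2>0$ is exploited as a nondegeneracy condition on the linearized Lane-Emden operator. Specifically, testing the eigenvalue equation $-\Delta U=\lambda_1(\Omega;q)\,U^{q-1}$ against $U$ and using $\int_\Omega U^q\,dx=1$ yields
\[
\int_\Omega |\nabla U|^2\,dx-(q-1)\,\lambda_1(\Omega;q)\int_\Omega U^q\,dx=(2-q)\,\lambda_1(\Omega;q)<0,
\]
so $U$ itself witnesses a strictly negative value of the quadratic form in \eqref{mu2}. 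Combined with $\mu_2>0$, this forces the linear operator
\[
L_U := -\Delta-(q-1)\,\lambda_1(\Omega;q)\,U^{q-2}
\]
to have exactly one negative eigenvalue and no zero eigenvalue; in particular $L_U$ is invertible from $\mathcal{D}^{1,2}_0(\Omega)$ onto its dual.

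Next, I would apply the IFT to
\[
F(U,\lambda,q):=\left(-\Delta U-\lambda\,U^{q-1},\ \int_\Omega U^q\,dx-1\right),
\]
whose zero set parametrizes the positive $q$-eigenpairs of unit $L^q$ norm. The explicit identity $L_U U=(2-q)\,\lambda\,U^{q-1}$ gives $L_U^{-1}U^{q-1}=U/[(2-q)\lambda]$, and a direct verification then shows that the partial differential $D_{(U,\lambda)}F$ has trivial kernel, hence is an isomorphism. Therefore every positive first eigenpair extends to a smooth branch as $q$ varies.

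With these ingredients, I would set $S:=\{q\in(2,2^*):\lambda_1(\Omega;q)\text{ is simple}\}$; by Proposition \ref{prop:lin}, $S$ contains an interval $(2,q_0)$. Openness of $S$ is straightforward: for $q_*\in S$ with unique first positive eigenfunction $U^{(q_*)}$, the IFT produces a smooth branch $U(q)$, and the uniform $L^\infty$-bound of Proposition \ref{prop:Linfty} together with elliptic regularity forces every positive first eigenfunction at $q$ close to $q_*$ to be close to $U^{(q_*)}$, hence, by the local uniqueness of the IFT, to coincide with $U(q)$.

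The hard step will be closedness of $S$. Taking $q_n\in S$ with $q_n\to q^*\in(2,2^*)$ and assuming for contradiction that $\lambda_1(\Omega;q^*)$ admits two distinct positive first eigenfunctions $U^*\ne V^*$, the IFT applied at $V^*$ yields positive $q_n$-eigenpairs $(V(q_n),\nu(q_n))\to(V^*,\lambda_1(\Omega;q^*))$. Since $V(q_n)$ stays away from $U^{(q_n)}$ in $\mathcal{D}^{1,2}_0(\Omega)$, simplicity at $q_n$ forces $\nu(q_n)>\lambda_1(\Omega;q_n)$ strictly, meaning that $V(q_n)$ is a positive but \emph{non-first} $q_n$-eigenfunction. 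This is a priori compatible with simplicity for $q>2$ (compare with Example \ref{exa:dancer}), so the real difficulty is promoting the nondegeneracy assumption into a quantitative spectral-gap estimate excluding $\nu(q_n)-\lambda_1(\Omega;q_n)\to 0^+$. This is where I expect the bulk of the technical work to lie.
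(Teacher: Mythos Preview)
The paper's proof is essentially a two-line citation: it observes that the hypothesis $\mu_2>0$, combined with Lemma~\ref{lm:pesante} (which already gives $\mu_1<0\le\mu_2$), forces the linearized operator $L_U=-\Delta-(q-1)\lambda_1(\Omega;q)U^{q-2}$ to have trivial kernel for every first positive $q$-eigenfunction $U$, and then invokes \cite[Theorem~4.4]{DGP} as a black box to conclude simplicity.

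Your proposal is an attempt to reconstruct what lies behind that citation, and the continuation-in-$q$ strategy is indeed the approach used in \cite{DGP}. Your reading of the hypothesis as a nondegeneracy condition is correct, your IFT setup is sound (the verification that $D_{(U,\lambda)}F$ has trivial kernel via $L_UU=(2-q)\lambda U^{q-1}$ is exactly right), and your openness argument for $S$ is complete.

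The gap you identify in the closedness step is genuine, and you have diagnosed it honestly. The difficulty is structural rather than quantitative: the hypothesis $\mu_2>0$ is imposed only at \emph{first} eigenfunctions, so once your branch $V(q_n)$ drops below the ground-state level it becomes invisible to the assumption, and there is no evident mechanism preventing $\nu(q_n)-\lambda_1(\Omega;q_n)\to 0^+$. I am not convinced that a ``quantitative spectral-gap estimate'' is the right tool here, since the hypothesis never sees $V(q_n)$ for $q_n<q^*$. What \cite{DGP} actually exploits is that nondegeneracy is assumed simultaneously for \emph{all} $q\in(2,2^*)$: this lets one treat the full family of ground states as a smooth one-dimensional object over $(2,2^*)$ and argue via the global branch structure (in effect, the projection to the $q$-axis becomes a covering map with a single sheet near $q=2$ by Proposition~\ref{prop:lin}). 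Completing your argument therefore requires either reading \cite{DGP} or reproducing a comparable global argument; as it stands, your proposal is a correct outline of the easy half of the proof but leaves the substantive half open, which you acknowledge.
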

\begin{proof}
It is not difficult to see that $\mu_2$ is the second eigenvalue of the linearized operator
\[
\varphi\mapsto -\Delta \varphi-(q-1)\,\lambda_{1}(\Omega;q)\, U^{q-2}\,\varphi.
\]
Then our assumption $\mu_2>0$ and Lemma \ref{lm:pesante} entail that $U$ is {\it non-degenerate}, i.e. $0$ is not an eigenvalue of such an operator. We thus conclude by applying \cite[Theorem 4.4]{DGP}.
\end{proof}
With the aid of the previous result, Proposition \ref{prop:lin} can be considerably improved for two dimensional convex sets. This is still due to Lin, see \cite[Theorem 1]{Lin}.
\begin{teo}[Simplicity for convex planar sets]
\label{teo:lin}
Let $\Omega\subset\mathbb{R}^2$ be an open bounded convex set. Then $\lambda_1(\Omega;q)$ is simple for every $2<q<2^*$.
\end{teo}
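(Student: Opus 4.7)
The plan is to apply Theorem \ref{teo:nondegen}: it suffices to show that, for every first positive $q$-eigenfunction $U$ of $\Omega$ with unit $L^q$ norm, the second Dirichlet eigenvalue $\mu_2$ of the linearized operator
\[
L_U := -\Delta - (q-1)\,\lambda_1(\Omega;q)\,U^{q-2}
\]
is strictly positive. Since $\Omega$ is convex, it has Lipschitz boundary and satisfies the uniform interior ball condition; coupling Proposition \ref{prop:Linfty} with standard elliptic regularity then gives $U\in C^2(\Omega)\cap C^0(\overline{\Omega})$, and Hopf's lemma yields $|\nabla U|>0$ on $\partial\Omega$.

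Testing the quadratic form associated with $L_U$ against $U$ itself produces
\[
Q_{L_U}(U)=\int_\Omega |\nabla U|^2\,dx-(q-1)\,\lambda_1(\Omega;q)\int_\Omega U^q\,dx=-(q-2)\,\lambda_1(\Omega;q)<0,
\]
so the first eigenvalue $\mu_1$ of $L_U$ is strictly negative and the Morse index of $U$ is at least one. I argue by contradiction that it is \emph{exactly} one: suppose $\mu_2\le 0$. Then $L_U$ admits a sign-changing weak eigenfunction $\phi\in\mathcal{D}^{1,2}_0(\Omega)$ with $L_U\phi=\mu\phi$ for some $\mu\le 0$, orthogonal in $L^2(\Omega)$ to a first eigenfunction of $L_U$; since $L_U$ is linear and self-adjoint, Courant's nodal domain theorem says that $\phi$ has exactly two nodal domains.

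The engine of the proof is that each partial derivative $U_{x_i}$, $i=1,2$, satisfies $L_U U_{x_i}=0$ pointwise in $\Omega$ and fails to lie in $\mathcal{D}^{1,2}_0(\Omega)$ precisely because $|\nabla U|\ne 0$ on $\partial\Omega$. These two linearly independent ``zero modes with nontrivial trace'' encode the planar geometry: combining the equations for $\phi$ and for $U_{x_i}$ through integration by parts produces a Pohozaev-type identity (in the spirit of Proposition \ref{lm:RP}) whose boundary contribution carries the factor $\langle x,\nu_\Omega\rangle$. After placing the origin inside $\Omega$, convexity of $\partial\Omega$ makes this factor nonnegative, which yields a definite sign; combined with $\mu\le 0$, the two-nodal-domain structure of $\phi$ and the Euler-Lagrange equation for $U$, one recovers a strict inequality that contradicts the assumption $\mu_2\le 0$.

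The delicate point, and the place where both $N=2$ and convexity are used in an essential way, is the organization of this Pohozaev-type computation so that the interior terms and the convex boundary term conspire to yield the strict inequality. The space of pointwise zero modes of $L_U$ generated by $U_{x_1},\ldots,U_{x_N}$ has dimension $N$, and only when $N=2$ does it match the number of directions available to build the rigid bookkeeping required; dropping convexity would destroy the sign of the boundary term. This is consistent with Example \ref{exa:dancer}, which exhibits a starshaped, non-convex set in higher dimension where $\lambda_1(\Omega;q)$ fails to be simple, and with the fact that the analogous question on non-convex planar sets (or on convex sets in dimension $N\ge 3$) is, to my knowledge, still open.
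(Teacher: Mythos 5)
Your high-level plan coincides with the paper's: invoke Theorem \ref{teo:nondegen}, argue by contradiction from the existence of a sign-changing second eigenfunction $\phi$ of the linearized operator, note that $U_{x_i}$ solves the linearized equation with a nonzero boundary trace, and play a Pohozaev-type boundary identity against Hopf's lemma and the convex geometry. But the proposal stops exactly where the real work begins, and two of the sketched steps are not correct as stated.

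First, you should not set up the contradiction from ``$\mu_2\le 0$'' in a vacuum: Lemma \ref{lm:pesante} already gives $\mu_2\ge 0$, and the proof crucially reduces to $\mu_2=0$. This matters because the cancellation $\int_\Omega U^{q-1}\phi\,dx=0$, which is what kills the source term and yields the clean boundary identity $\int_{\partial\Omega} w\,\partial_\nu\phi\,d\mathcal{H}^{1}=0$, only comes out if $\phi$ is a genuine zero mode; for $\mu<0$ one picks up an extra term $\mu\int_\Omega w\phi$ (and $\int U^{q-1}\phi$ no longer vanishes) with no definite sign. Second, the auxiliary function that does the work in the paper is $w(x)=\langle x-z_0,\nabla U(x)\rangle$, which is \emph{not} a zero mode of $L_U$: it satisfies $L_U w=2U^{q-1}$. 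Your description of the argument as ``combining equations for $\phi$ and for the zero modes $U_{x_i}$'' blurs this; the paper uses $w$ with a source term in two of the three cases, and an honest partial derivative $U_{x_1}$ only in the remaining case. Third, and most importantly, ``placing the origin inside $\Omega$'' is false in general. The heart of the proof is a case analysis depending on how the nodal line of $\phi$ (which, by Courant, splits $\Omega$ into exactly two pieces) hits $\partial\Omega$: if it hits at one point, any interior $z_0$ works; if it hits at two points with non-parallel supporting lines, $z_0$ must be taken at the intersection of those supporting lines, which lies \emph{outside} $\overline{\Omega}$ (or on $\partial\Omega$), precisely so that $w$ changes sign across the two boundary arcs in a way compatible with the sign change of $\partial_\nu\phi$; and if the supporting lines are parallel, the $w$ of that form does not work at all and one switches to $w=\partial U/\partial x_1$. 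None of this bookkeeping is in your write-up, yet it is the entire content of Lin's Lemma 2, and without it the boundary integral has no definite sign. Finally, a small factual point: Example \ref{exa:dancer} is stated for all $N\ge 2$, so the simplicity failure for starshaped sets already occurs in the plane; it is convexity, not dimension alone, that is decisive.
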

\begin{proof}
In view of Theorem \ref{teo:nondegen}, it is sufficient to prove that condition \eqref{mu2} is satisfied. By Lemma \ref{lm:pesante}, we already know that $0\le \mu_2$. Thus in order to conclude, we only need to show that $\mu_2\not=0$.
\par 
The proof of this fact is quite sophisticated, we reproduce Lin's argument contained in\footnote{The proof in \cite{Lin} tacitly assumes the boundary of $\Omega$ to be smooth. Here we avoid smoothness assumptions.} \cite[Lemma 2]{Lin}. We will indicate by $\mathcal{H}^{N-1}$ the $(N-1)-$dimensional Hausdorff measure.
\par
We argue by contradiction and assume that $\mu_2=0$. Thus there exists a nontrivial function $\phi$ such that
\begin{equation}
\label{eigensalonen}
-\Delta \phi-(q-1)\,\lambda_{1}(\Omega;q)\, U^{q-2}\,\phi=0,\quad \mbox{ in }\Omega,\qquad \phi=0,\quad \mbox{ on }\partial\Omega.
\end{equation}
We first observe that since $U\in L^\infty(\Omega)$ and $\Omega$ is convex,
we have that $\phi,U \in H^2(\Omega)$ by \cite[Theorem 3.2.1.2]{Gr}. This in turn implies that $\nabla\phi,\nabla U\in H^1(\Omega;\mathbb{R}^2)$ and thus they have a trace in $H^{1/2}(\partial\Omega)\hookrightarrow L^2(\partial\Omega)$.
Moreover, by Hopf's boundary Lemma, it holds
\begin{equation}
\label{hopfconvesso}
0>\frac{\partial U}{\partial \nu_\Omega}=-|\nabla U|,\qquad \mbox{ $\mathcal{H}^{N-1}-$a.\,e. on }\partial\Omega,
\end{equation}
where $\nu_\Omega$ is the exterior normal versor, which is well-defined $\mathcal{H}^{N-1}-$almost everywhere\footnote{It is sufficient to reproduce the standard proof of Hopf's boundary Lemma, by further using the following fact: {\it if $\Omega\subset\mathbb{R}^N$ is an open bounded convex set, then for $\mathcal{H}^{N-1}-$almost every $y_0\in\partial\Omega$ there exists $B_R(x_0)\subset\Omega$ such that
\[
\partial B_R(x_0)\cap\partial\Omega=\{y_0\}.
\]}
The proof of this ``almost everywhere internal ball condition'' can be achieved by using that $\partial\Omega$ is locally the graph of a convex function and that convex functions admits a second order Taylor expansion almost everywhere (the so-called {\it Alexandrov's Theorem}, see \cite[Chapter 6, Section 4, Theorem 1]{EG}).}. 
Then we define the new function
\begin{equation}
\label{w}
w(x)=\langle x-z_0,\nabla U(x)\rangle,\qquad x\in\Omega,
\end{equation}
where $z_0\in\mathbb{R}^2$ is a point that will be suitably chosen. Observe that $w\in W^{1,2}(\Omega)$.
By using the equation for $U$, the function $w$ weakly solves
\begin{equation}
\label{franz1}
-\Delta w-(q-1)\,\lambda_1(\Omega;q)\,U^{q-2}\,w=2\,U^{q-1}.
\end{equation}
By using the equations for $U$ and $\phi$, we get
\[
\begin{split}
0&=\int_\Omega \langle\nabla U,\nabla \phi\rangle\,dx-\int_\Omega \langle\nabla\phi,\nabla U\rangle\,dx\\
&=\lambda_1(\Omega;q)\,\int_\Omega U^{q-1}\,\phi\,dx-(q-1)\,\lambda_1(\Omega;q)\,\int_\Omega U^{q-1}\,\phi\,dx,
\end{split}
\]
which implies that 
\begin{equation}
\label{franz2}
\int_\Omega U^{q-1}\,\phi\,dx=0,
\end{equation}
thanks to the fact that $q\not =2$. By using \eqref{eigensalonen}, \eqref{franz1} and \eqref{franz2}, we  get\footnote{The boundary integral is well-defined, thanks to the fact that $w\in H^1(\Omega)$, $\nabla \phi\in H^1(\Omega;\mathbb{R}^2)$.}
\[
\begin{split}
\int_{\partial\Omega} w\,\frac{\partial\phi}{\partial\nu_\Omega}\,d\mathcal{H}^{N-1}&=\int_\Omega \langle \nabla w,\nabla \phi\rangle\,dx+\int_\Omega w\,\Delta\phi\,dx\\
&=(q-1)\,\lambda_1(\Omega;q)\,\int_\Omega U^{q-2}\,w\,\phi\,dx+2\,\int_\Omega U^{q-1}\,\phi\,dx\\
&-(q-1)\,\lambda_1(\Omega;q)\,\int_\Omega U^{q-2}\,w\,\phi\,dx,
\end{split}
\]
that is
\begin{equation}
\label{linoleum}
\int_{\partial\Omega} w\,\frac{\partial\phi}{\partial\nu_\Omega}\,d\mathcal{H}^{N-1}=0.
\end{equation}
The idea now is to exploit this identity and the convexity of $\Omega$, in order to contradict Hopf's boundary Lemma. 
\par
Since $\phi$ is a second eigenfunction of the linearized problem,
we can apply {\it Courant's Nodal Domains Theorem}\footnote{We recall that the proof of this result is based on the {\it Courant-Fischer-Weyl min-max formula} and the {\it unique continuation principle} for eigenfunctions. Both facts hold for the linearized operator 
\[
-\Delta-(q-1)\,\lambda_1(\Omega;q)\,U^{q-1},
\] 
thus one can easily adapt the classical proof of \cite[page 452]{CH}. For the unique continuation principle, we refer to \cite[Theorem II]{Mu}.}, to deduce that the nodal set 
\[
\overline{\{x\in\Omega\, :\, \phi(x)=0\}},
\]
divides $\Omega$ in exactly two sets. We thus have three cases:
\begin{itemize}
\item[(i)] the nodal set hits $\partial\Omega$ at one point;
\vskip.2cm
\item[(ii)] the nodal set hits $\partial\Omega$ at two points $x_0\not= x_1$ and there exist two directions $\omega_0,\omega_1\in\mathbb{S}^1$ such that
\[
L_i=\{x\in \mathbb{R}^2\, :\, \langle x-x_i,\omega_i\rangle= 0\},\qquad \mbox{ for } i=0,1, 
\]
are {\it supporting lines}\footnote{This means that 
\[
\overline{\Omega}\subset \{x\in \Omega\, :\, \langle x-x_i,\omega_i\rangle\le 0\},\qquad \mbox{ for } i=0,1.
\]} for $\Omega$, which are not parallel (see Figure \ref{fig:lin_potato});
\vskip.2cm
\item[(iii)] the nodal line set $\partial\Omega$ at two points $x_0\not= x_1$ and the supporting lines $L_0,L_1$ at these points are parallel (see Figure \ref{fig:lin_potato2}). 
\end{itemize}
\begin{figure}[h]
\includegraphics[scale=.3]{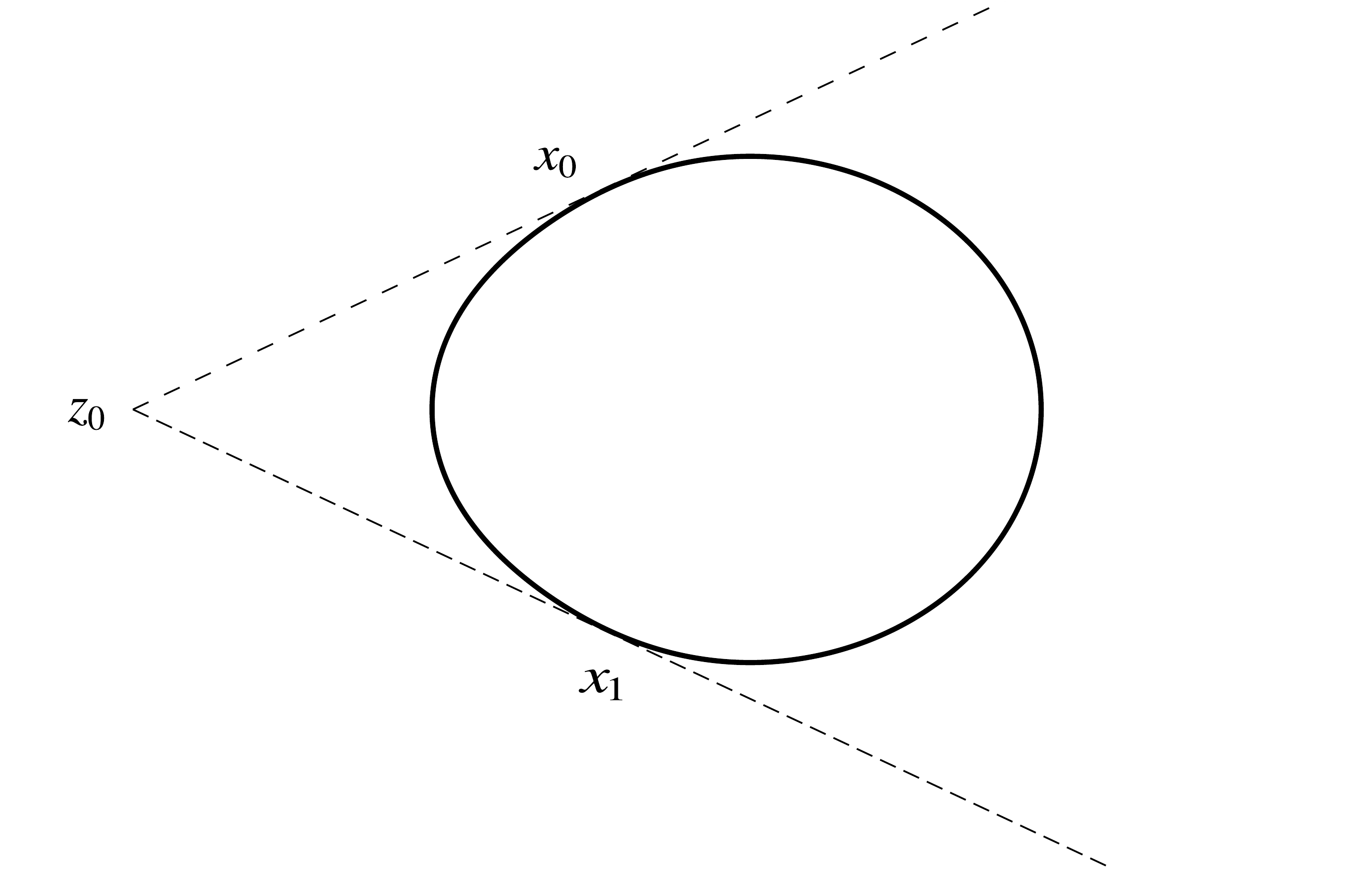}
\caption{Proof of Theorem \ref{teo:nondegen}, case (ii).}
\label{fig:lin_potato}
\end{figure}
\begin{figure}[h]
\includegraphics[scale=.3]{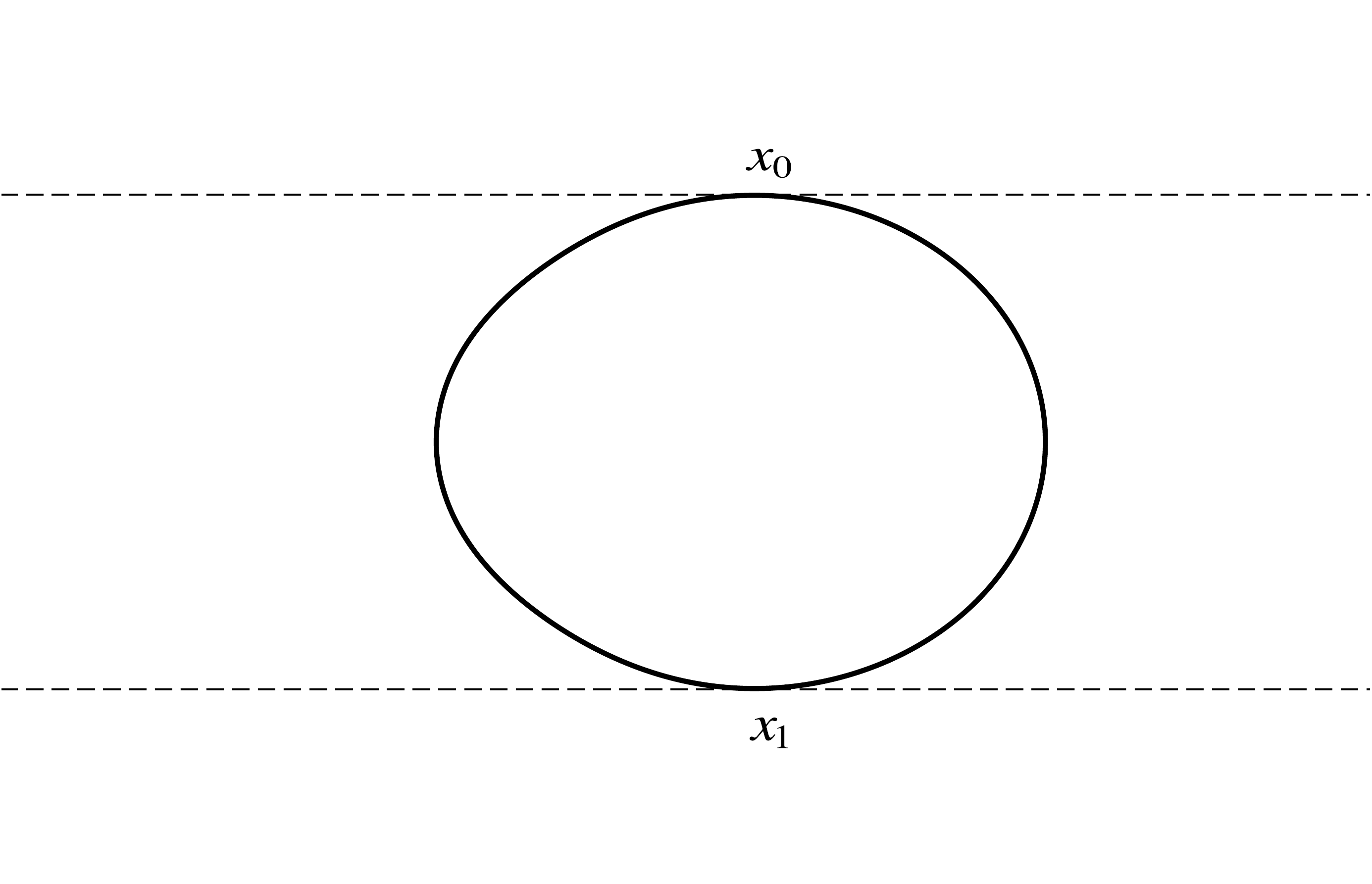}
\caption{Proof of Theorem \ref{teo:nondegen}, case (iii).}
\label{fig:lin_potato2}
\end{figure}
Case (i) is the simplest one: by taking $z_0$ to be any interior point of $\Omega$, by convexity we have
\[
w(x)=\langle x-z_0,\nabla U(x)\rangle<0,\qquad \mbox{ for $\mathcal{H}^{N-1}-$a.\,e. } x\in\partial\Omega.
\]
Here we used \eqref{hopfconvesso}. Moreover, the normal derivative $\partial\phi/\partial\nu_\Omega$ must have constant sign on $\partial\Omega$. The last two informations, inserted in \eqref{linoleum}, entail that 
\[
\frac{\partial \phi}{\partial\nu_\Omega}=0,\qquad \mbox{ $\mathcal{H}^{N-1}-$a.\,e. on } \partial\Omega.
\]
This contradict Hopf's boundary Lemma.
\par
In case (ii), we choose $z_0\in\mathbb{R}^2\setminus\Omega$ to be the intersection of the two supporting lines $L_0$ and $L_1$, see Figure \ref{fig:lin_potato}. Observe that it may happen that $z_0\in\partial\Omega$. We now divide $\partial\Omega$ has follows: $E_-$ is the curve on $\partial\Omega$ connecting $x_0$ to $x_1$, in counter-clockwise sense; then $E_+=\partial\Omega\setminus E_-$. By construction, we have
\begin{equation}
\label{pop}
\int_{\partial\Omega} w\,\frac{\partial\phi}{\partial\nu_\Omega}\,d\mathcal{H}^{N-1}=\int_{E_+} w\,\frac{\partial\phi}{\partial\nu_\Omega}\,d\mathcal{H}^{N-1}+\int_{E_-} w\,\frac{\partial\phi}{\partial\nu_\Omega}\,d\mathcal{H}^{N-1},
\end{equation}
and moreover, thanks to convexity, we have $w<0$ on $E_-$ and $w>0$ on $E_+$. The function $\phi$ has constant sign on the domain enclosed by $E_-$ and the nodal set, assume for simplicity that we have $\phi>0$. Then by Hopf's boundary Lemma
\[
\frac{\partial\phi}{\partial\nu_\Omega}<0,\qquad \mbox{ for $\mathcal{H}^{N-1}-$a.\,e. } x\in E_-.
\] 
Similary, by using that $\phi<0$ on the domain enclosed by $E_+$ and the nodal set, we get
\[
\frac{\partial\phi}{\partial\nu_\Omega}>0,\qquad \mbox{ for $\mathcal{H}^{N-1}-$a.\,e. } x\in E_+.
\] 
By using these sign informations in \eqref{pop}, we get
\[
\int_{\partial\Omega} w\,\frac{\partial\phi}{\partial\nu_\Omega}\,d\mathcal{H}^{N-1}>0,
\]
which contradicts \eqref{linoleum}.
\par
Finally, in case (iii), by assuming for simplicity that $L_0$ and $L_1$ are parallel to the $x_1$ axis, we change the choice \eqref{w} of $w$ and replace it with the following one
\[
w=\frac{\partial U}{\partial x_1}.
\]
It is not difficult to see that \eqref{linoleum} still holds\footnote{In this part, the paper \cite{Lin} contains a misprint. The term $\partial\phi/\partial x_1$ there must be replaced by $\partial\phi/\partial\nu_\Omega$.}. Then one can proceed as in case (ii) and get the conclusion in this case, as well.
\end{proof}
\begin{oss}
\label{oss:dancer}
A result analogous to Theorem \ref{teo:lin} was previously proved by Dancer for smooth bounded planar sets $\Omega\subset\mathbb{R}^2$ such that (see Figure \ref{fig:dancer}): 
\begin{itemize}
\item $\Omega$ is convex in the directions $x_1$ and $x_2$;
\vskip.2cm
\item $\Omega$ is symmetric with respect to the hyperplanes $\{x_1=0\}$ and $\{x_2=0\}$,
\end{itemize}
see \cite[Theorem 5]{Da1988}. Later on, Dancer's result was obtained again by Damascelli, Grossi and Pacella in \cite[Theorem 4.1]{DGP}, by using a different proof based on minimum principles.
\begin{figure}
\includegraphics[scale=.4]{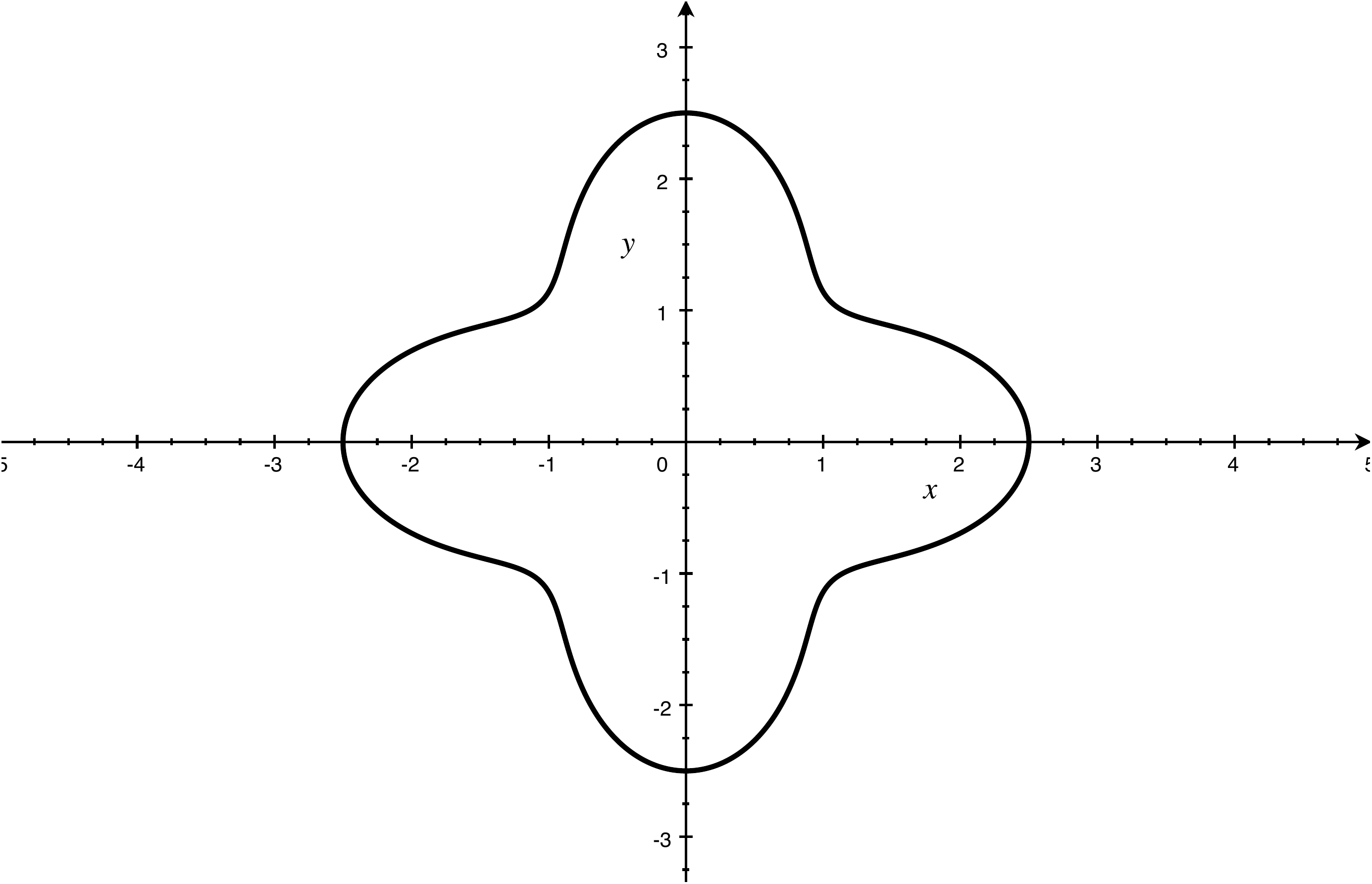}
\caption{A non-convex set verifying the assumption of Remark \ref{oss:dancer}.}
\label{fig:dancer}
\end{figure}
\end{oss}

\subsection{Counter-examples}
A well-known counter-example due to Nazarov shows that for $q>2$:
 \begin{enumerate}
\item $\lambda_1(\Omega;q)$ may not be simple;
\vskip.2cm
\item there may exist a $q-$eigenvalue $\lambda>\lambda_1(\Omega;q)$ with positive eigenfunctions.
\end{enumerate}
The set $\Omega$ considered by Nazarov is a {\it spherical shell}, i.e. a set with nontrivial topology, see \cite[Proposition 1.2]{Na}.
 \vskip.2cm
The following example shows that the same phenomena can appear even if the set has a trivial topology. Indeed, observe that the sets $\Omega_\varepsilon$ below are contractible. More precisely, they are starshaped. This shows that the simplicity of $\lambda_1(\Omega;q)$ for $2<q<2^*$ is linked to the geometry of the underlying set $\Omega$ and not simply to its topology.
\begin{figure}
\includegraphics[scale=.4]{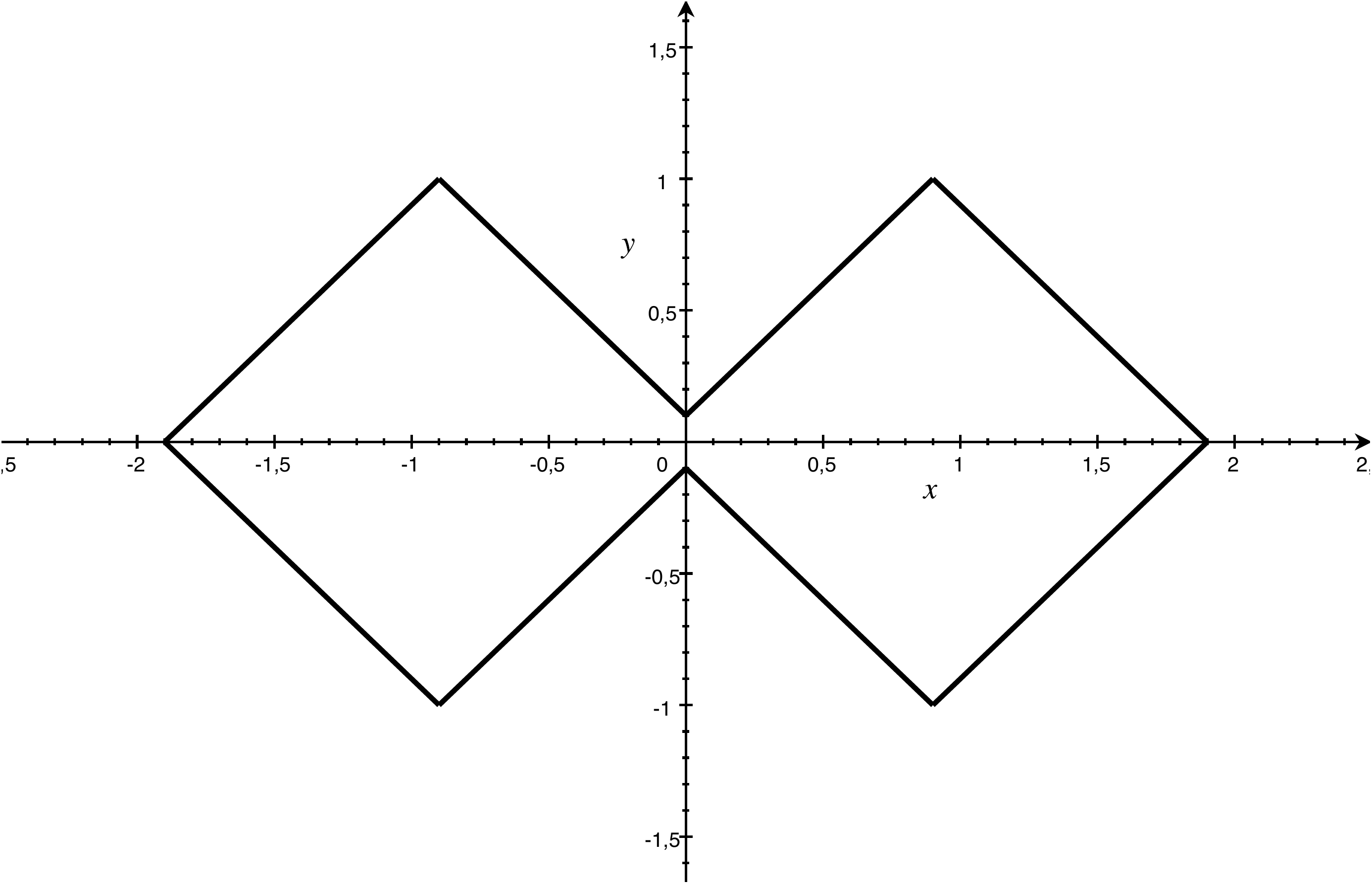}
\caption{The set $\Omega_\varepsilon$ of Example \ref{exa:dancer}.}
\end{figure}
\begin{exa}
\label{exa:dancer}
Let $2<q<2^*$ and $0<\varepsilon<1$, we indicate by $Q_1$ the cube
\[
Q_1=\left\{x\in\mathbb{R}^N\, :\, \sum_{i=1}^N |x_i|<1\right\}.
\]
We set
\[
\Omega^+_\varepsilon=\Big(Q_1+(1-\varepsilon)\,\mathbf{e}_1\Big)\cap\Big\{(x_1,x')\in\mathbb{R}^N\, :\, x_1\ge 0\Big\},
\]
and
\[
\Omega^-_\varepsilon=\Big(Q_1-(1-\varepsilon)\,\mathbf{e}_1\Big)\cap\Big\{(x_1,x')\in\mathbb{R}^N\, :\, x_1\le 0\Big\}.
\]
Then we consider the open set
\[
\Omega_\varepsilon=\Omega^+_\varepsilon\cup\Omega^-_\varepsilon,
\]
consisting of two overlapping cubes centered at $(-1+\varepsilon)\,\mathbf{e}_1$ and $(1-\varepsilon)\,\mathbf{e}_1$, both having side $1$. There exists $\varepsilon_0=\varepsilon_0(N,q)>0$ such that for every $0<\varepsilon\le \varepsilon_0$, we have:
\begin{enumerate}
\item $\lambda_1(\Omega_\varepsilon;q)$ is not simple;
\vskip.2cm
\item there exists a $q-$eigenvalue $\lambda>\lambda_1(\Omega_\varepsilon;q)$ with positive eigenfunctions.
\end{enumerate}
\end{exa}
\begin{proof}
It is sufficient to prove that, at least for $\varepsilon>0$ small enough, any first $q-$eigenfunction $U_\varepsilon$ does not inherit the symmetry about the hyperplane $x_1=0$ from the set $\Omega_\varepsilon$. Indeed, if this were the case, then the two functions
\[
U_\varepsilon(x_1,x')\qquad \mbox{ and }\qquad U_\varepsilon(-x_1,x'),
\]
would give a pair of linearly independent first $q-$eigenfunctions.
In other words, we just need to prove that for $\varepsilon\ll 1$, we have
\begin{equation}
\label{dancer}
\lambda_1^{\rm sym}(\Omega_\varepsilon;q):=\min_{u\in \mathcal{D}^{1,2}_0(\Omega_\varepsilon)\setminus\{0\}} \left\{\frac{\displaystyle\int_{\Omega_\varepsilon} |\nabla u|^2\,dx}{\left(\displaystyle\int_{\Omega_\varepsilon} |u|^q\,dx\right)^\frac{2}{q}}\, :\, u(x_1,x')=u(-x_1,x')\right\}>\lambda_1(\Omega_\varepsilon;q).
\end{equation}
It is easy to see that the quantity $\lambda_1^{\rm sym}(\Omega_\varepsilon;q)$ defines a $q-$eigenvalue for $\Omega_\varepsilon$. We take $u_\varepsilon$ optimal for the variational problem which defines $\lambda_1^{\rm sym}(\Omega_\varepsilon;q)$. Without loss of generality, we can assume that $u_\varepsilon\ge 0$ and that
\[
\int_{\Omega^+_\varepsilon} |u_\varepsilon|^q\,dx=\int_{\Omega^-_\varepsilon} |u_\varepsilon|^q\,dx=1.
\]
In order to prove \eqref{dancer}, we first observe that by symmetry
\[
\begin{split}
\lambda_1^{\rm sym}(\Omega_\varepsilon;q)&=\frac{\displaystyle2\,\int_{\Omega^+_\varepsilon} |\nabla u_\varepsilon|^2\,dx}{\left(\displaystyle2\,\int_{\Omega^+_\varepsilon} |u_\varepsilon|^q\,dx\right)^\frac{2}{q}}\\
&=\min_{u\in H^1(\Omega^+_\varepsilon)\setminus\{0\}} \left\{\frac{\displaystyle2\,\int_{\Omega^+_\varepsilon} |\nabla u|^2\,dx}{\left(\displaystyle2\,\int_{\Omega^+_\varepsilon} |u|^q\,dx\right)^\frac{2}{q}}\, :\, u=0 \mbox{ on }\partial\Omega^+_\varepsilon\cap \{x_1>0\}\right\}\\
&=:2^{1-\frac{2}{q}}\,\mu_q(\Omega^+_\varepsilon).
\end{split}
\]
On the other hand, by using that $Q_1+(1-\varepsilon)\,\mathbf{e}_1\subset \Omega_\varepsilon$, by \eqref{mono} we immediately get
\begin{equation}
\label{ciaociao}
\lambda_1(\Omega_\varepsilon;q)\le \lambda_1(Q_1;q).
\end{equation}
We now claim that 
\begin{equation}
\label{limite}
\lim_{\varepsilon\to 0^+}\mu_q(\Omega^+_\varepsilon)=\lambda_1(Q_1;q).
\end{equation}
Observe that once we prove \eqref{limite}, the claimed estimate \eqref{dancer} easily follows from \eqref{ciaociao}, since 
\[
\lambda_1^{\rm sym}(\Omega_\varepsilon;q)=2^{1-\frac{2}{q}}\,\mu_q(\Omega^+_\varepsilon),
\]
and the factor $2^{1-2/q}$ is strictly larger than $1$, thanks to the fact that $q>2$.  
\par
In order to prove \eqref{limite}, we first observe that the first $q-$eigenfunction of the rescaled cube 
\[
\Big((1-\varepsilon)\,Q_1\Big)+ (1-\varepsilon)\,\mathbf{e}_1,
\] 
is admissible in the variational problem which defines $\mu_q(\Omega^+_\varepsilon)$. This entails
\begin{equation}
\label{upper bound}
\mu_q(\Omega^+_\varepsilon)\le \lambda_1((1-\varepsilon)\,Q_1;q)=(1-\varepsilon)^{N-2-\frac{2}{q}\,N}\,\lambda_1(Q_1;q).
\end{equation}
In turn, we immediately get
\[
\limsup_{\varepsilon\to 0^+}\mu_q(\Omega^+_\varepsilon)\le \lambda_1(Q_1;q).
\]
We have to show that 
\[
\liminf_{\varepsilon\to 0^+}\mu_q(\Omega^+_\varepsilon)\ge \lambda_1(Q_1;q).
\]
We take $u_\varepsilon\in \mathcal{D}^{1,2}_0(\Omega_\varepsilon)$ to be optimal for the variational problem which defines $\lambda_1^{\rm sym}(\Omega_\varepsilon;q)$ and consider its restriction to $\Omega_\varepsilon^+$. For simplicity, we can assume that $u_\varepsilon$ has unit $L^q$ norm on $\Omega^+_\varepsilon$. We also take
\[
\eta_\varepsilon(x_1)=\eta\left(\frac{x_1}{\varepsilon}\right),
\]
where $\eta\in C^\infty_0(\mathbb{R})$ is a non-negative and non-decreasing function, such that $\eta(t)=1$ for $t\ge 2$ and $\eta(0)=0$ for $t\le 1$. We then use the test function $\eta_\varepsilon\,u_\varepsilon$, so to get
\[
\begin{split}
\lambda_1(Q_1;q)&=\lambda_1(Q_1+(1-\varepsilon)\,\mathbf{e}_1;q)\\
&\le \frac{\displaystyle\int_{Q_1+(1-\varepsilon)\,\mathbf{e}_1}\Big[|\eta'_\varepsilon|^2\,|u_\varepsilon|^2+|\nabla u_\varepsilon|^2\,|\eta_\varepsilon|^2+2\,\partial_{x_1} u_\varepsilon\,\eta'_\varepsilon\,u\,\eta_\varepsilon\Big]\,dx}{\left(\displaystyle \int_{Q_1+(1-\varepsilon)\,\mathbf{e}_1}|\eta_\varepsilon|^q\,|u_\varepsilon|^q\,dx\right)^\frac{2}{q}}.
\end{split}
\]
We start by estimating the denominator. We have
\[
\begin{split}
\int_{Q_1+(1-\varepsilon)\,\mathbf{e}_1}|\eta_\varepsilon|^q\,|u_\varepsilon|^q\,dx&=\int_{\Omega_\varepsilon^+}|\eta_\varepsilon|^q\,|u_\varepsilon|^q\,dx\\
&=1-\int_{\{x\in Q_1+(1-\varepsilon)\,\mathbf{e}_1\, :\, 0<x_1<2\,\varepsilon\}}(1-|\eta_\varepsilon|^q)\,|u_\varepsilon|^q\,dx\\
&\ge 1- \|u_\varepsilon\|^q_{L^\infty(\Omega_\varepsilon^+)}\,|\{x\in Q_1+(1-\varepsilon)\,\mathbf{e}_1\, :\, 0<x_1<2\,\varepsilon\}|\\
&\ge 1-C\,\|u_\varepsilon\|^q_{L^\infty(\Omega_\varepsilon^+)}\,\varepsilon^N\\
&\ge 1-\widetilde C\,\varepsilon^N,
\end{split}
\]
where we used Proposition \ref{prop:Linfty} to bound uniformly the $L^\infty$ norm of the $q-$eigenfunction $u_\varepsilon$. By taking $\varepsilon>0$ small enough, raising to the power $-2/q$ and using the elementary inequality
\[
(1-t)^{-\frac{2}{q}}\le 1+2\,(2^{2/q}-1)\,t,\qquad \mbox{ for } 0\le t\le \frac{1}{2},
\]
we get 
\[
\left(\displaystyle \int_{Q_1+(1-\varepsilon)\,\mathbf{e}_1}|\eta_\varepsilon|^q\,|u_\varepsilon|^q\,dx\right)^{-\frac{2}{q}}\le 1+C\,\varepsilon^N,
\]
for a constant $C>0$ independent of $\varepsilon$.
Up to now, we obtained
\begin{equation}
\label{meuamigo}
\lambda_1(Q_1;q)\le \left(1+C\,\varepsilon^N\right)\int_{Q_1+(1-\varepsilon)\,\mathbf{e}_1}\Big[|\nabla u_\varepsilon|^2\,|\eta_\varepsilon|^2+|\eta'_\varepsilon|^2\,|u_\varepsilon|^2+2\,\partial_{x_1} u_\varepsilon\,\eta'_\varepsilon\,u_\varepsilon\,\eta_\varepsilon\Big]\,dx.
\end{equation}
In order to estimate the last integral, we separately estimate each integrand as follows: for the first one, we simply have
\[
\int_{Q_1+(1-\varepsilon)\,\mathbf{e}_1} |\nabla u_\varepsilon|^2\,|\eta_\varepsilon|^2\,dx\le \int_{\Omega^+_\varepsilon} |\nabla u_\varepsilon|^2\,dx=\mu_q(\Omega^+_\varepsilon).
\]
For the second integral, we observe that
\begin{equation}
\label{maledetto}
\begin{split}
\int_{Q_1+(1-\varepsilon)\,\mathbf{e}_1}&|\eta'_\varepsilon|^2\,|u_\varepsilon|^2\,dx\\
&\le \frac{1}{\varepsilon^2}\,\|\eta'\|_{L^\infty}^2\,\|u_\varepsilon\|^2_{L^\infty(\Omega_\varepsilon^+)}\,|Q_1+(1-\varepsilon)\,\mathbf{e}_1\cap \{\varepsilon<x_1<2\,\varepsilon\}|\\
&\le \frac{C}{\varepsilon^2}\,\varepsilon^N=C\,\varepsilon^{N-2}.
\end{split}
\end{equation}
The third integral can be treated similarly, by observing that
\[
\begin{split}
2\,\int_{Q_1+(1-\varepsilon)\,\mathbf{e}_1}\partial_{x_1} u_\varepsilon\,\eta'_\varepsilon\,u_\varepsilon\,\eta_\varepsilon\,dx&\le 2\, \left(\int_{\{\varepsilon<x_1<2\,\varepsilon\}}|\nabla u_\varepsilon|^2\,\eta_\varepsilon^2\,dx\right)^\frac{1}{2}\,\left(\int |\eta'_\varepsilon|^2\,u_\varepsilon^2\,dx\right)^\frac{1}{2}\\
&\le 2\,\sqrt{\mu_q(\Omega_\varepsilon^+)}\,\left(\int |\eta'_\varepsilon|^2\,u_\varepsilon^2\,dx\right)^\frac{1}{2}.
\end{split}
\]
By recalling the uniform estimate \eqref{upper bound}, from \eqref{meuamigo} we finally get for every $0<\varepsilon\le \varepsilon_0$
\[
\lambda_1(Q_1;q)\le \left(1+C\,\varepsilon^N\right)\,\left(\mu_q(\Omega^+_\varepsilon)+C_1\,\varepsilon^{N-2}+C_2\,\varepsilon^\frac{N-2}{2}\right),
\]
for some constants $C,C_1,C_2$ independent of $\varepsilon$.
This estimate is sufficient to conclude in the case $N\ge 3$. Indeed, in this case we get
\[
\lambda_1(Q_1;q)\le \liminf_{\varepsilon\to 0} \left(1+C\,\varepsilon^\frac{N+1}{2}\right)\,\left(\mu_q(\Omega^+_\varepsilon)+C_1\,\varepsilon^{N-2}+C_2\,\varepsilon^\frac{N-2}{2}\right)=\liminf_{\varepsilon\to 0} \mu_q(\Omega^+_\varepsilon),
\]
and this, in turn, concludes the proof of \eqref{limite}. 
\par
The case $N=2$ is slightly more complicate, in this case the estimate \eqref{maledetto} is a bit too rough. We need a more precise H\"older--type estimate of $u_\varepsilon$ near the junction part between $\Omega^+_\varepsilon$ and $\Omega^-_\varepsilon$. We proceed like this: we take polar coordinates $(\varrho,\vartheta)$ centered at $(0,-\varepsilon)$. Here $\varrho$ stands for the distance from the ``center'' $(0,-\varepsilon)$ and $\vartheta$ is the angle measuring the deviation from the semiaxis of negative $x_2$. Then we consider the barrier function
\[
\psi(\varrho,\vartheta)=C\,\left[\sqrt{\frac{\varrho}{2}}\,\sin\left(\frac{\vartheta}{2}\right)-\frac{\varrho}{2}\,\sin\left(\frac{\pi}{8}\right)\right],
\]
see Figure \ref{fig:barriera_quadrata}.
Observe that by construction we have
\[
\psi\ge 0 \mbox{ on } \partial\Omega_\varepsilon\qquad \mbox{ and }\qquad -\Delta \psi=C\,\Delta\left(\frac{\varrho}{2}\,\sin\left(\frac{\pi}{8}\right)\right)=\frac{C}{2}\,\sin\left(\frac{\pi}{8}\right)\,\frac{1}{\varrho}.
\]
Thus, up to choose $C>0$ large enough (uniformly in $\varepsilon$), we get
\[
-\Delta \psi\ge -\Delta u_\varepsilon,\qquad \mbox{ in }\Omega_\varepsilon.
\]
By the comparison principle, we obtain 
\[
0\le u_\varepsilon\le \psi\qquad \mbox{ in } \Omega_\varepsilon,
\]
which in turn implies that
\[
\begin{split}
\int_{Q_1+(1-\varepsilon)\,\mathbf{e}_1} |\eta'_\varepsilon|^2\,|u_\varepsilon|^2\,dx&\le \int_{Q_1+(1-\varepsilon)\,\mathbf{e}_1} |\eta'_\varepsilon|^2\,|\psi|^2\,dx\\
&\le\frac{\|\eta'\|_{L^\infty}^2}{\varepsilon^2}\,\int_{Q_1+(1-\varepsilon)\,\mathbf{e}_1\cap \{\varepsilon<x_1<2\,\varepsilon\}} |\psi|^2\,dx.
\end{split}
\]
It is only left to observe that $|\psi|\le C\,\sqrt{\varepsilon}$ on the set $Q_1+(1-\varepsilon)\,\mathbf{e}_1\cap \{\varepsilon<x_1<2\,\varepsilon\}$. This is now sufficient to conclude the proof as in the case $N\ge 3$.
\begin{figure}
\centering
\begin{minipage}{.49\textwidth}\centering
\includegraphics[width=7.5cm]{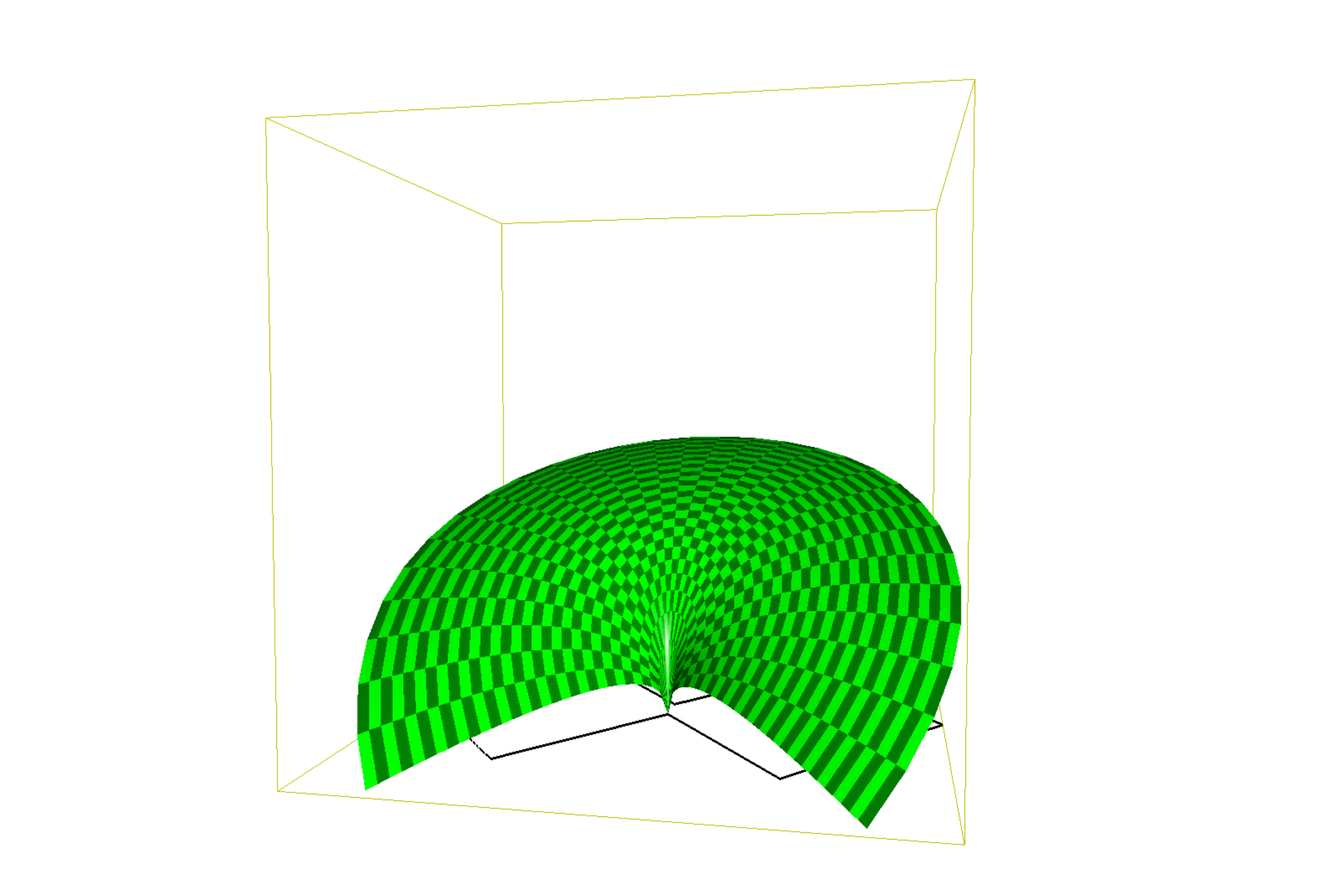}
\end{minipage}
\begin{minipage}{.49\textwidth}\centering
\includegraphics[width=7.5cm]{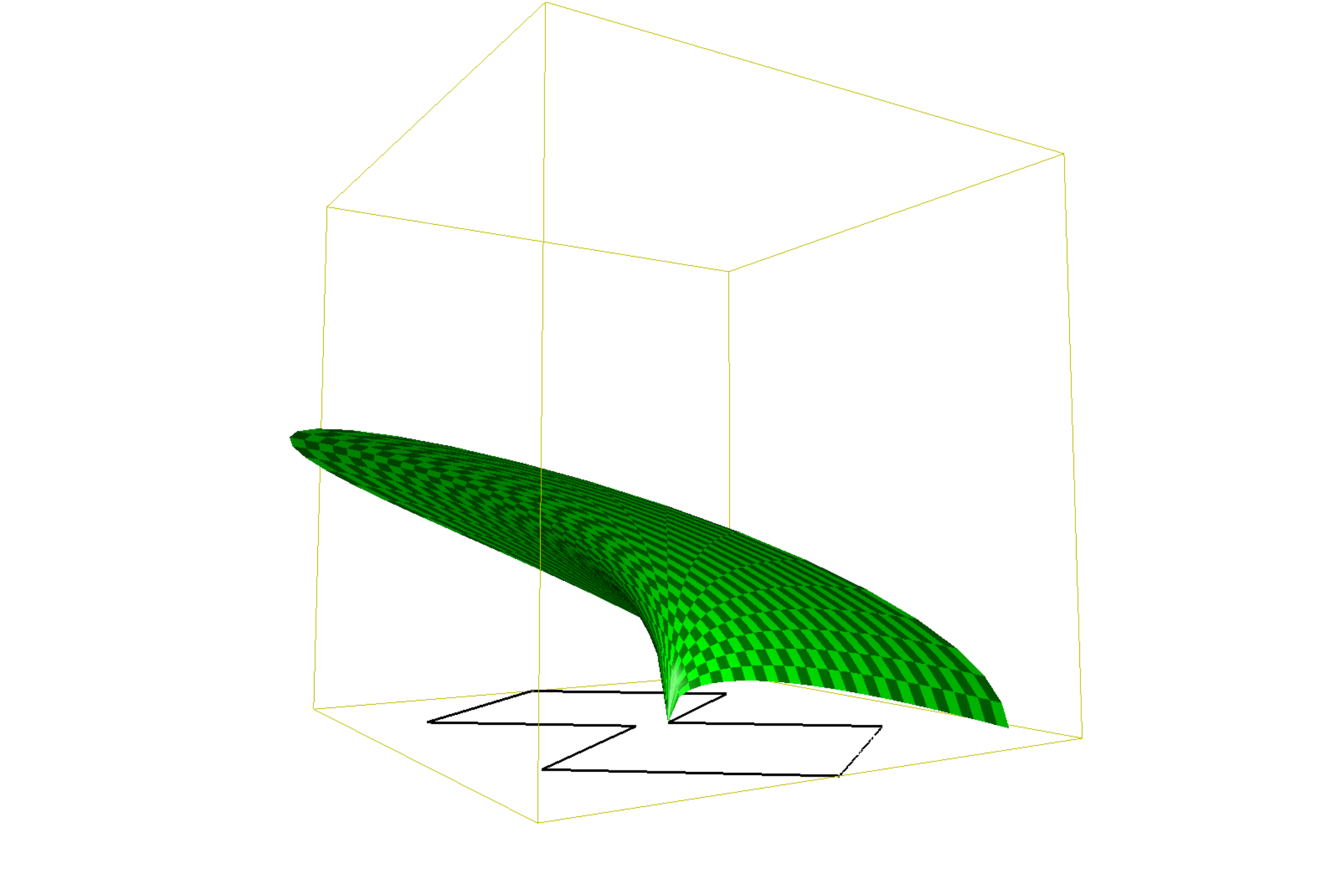}
\end{minipage}
\caption{The graph of the barrier function $\psi$, neeeded to handle Example \ref{exa:dancer} in the case $N=2$. In black, the boundary of the set $\Omega_\varepsilon$.}
\label{fig:barriera_quadrata}
\end{figure}
\end{proof}

\begin{oss}
The previous example is inspired by an inspection of the papers \cite{Da1988} and \cite{Da}. 
\end{oss}

\subsection{Open problems} We list here some questions for the case $2<q<2^*$ which, to the best of our knowledge, are open.

\begin{open}
On a ``good'' open set $\Omega\subset\mathbb{R}^N$, the $q-$spectrum is discrete and 
\[
\mathrm{Spec}(\Omega;q)=\mathrm{Spec}_{LS}(\Omega;q).
\]
\end{open}

\begin{open}
The first $q-$eigenvalue is isolated. 
\end{open}
\begin{oss}
We point out that, as observed in \cite{Er}, the isolation of $\lambda_1(\Omega;q)$ holds true whenever this is simple. However, it may happen that the first $q-$eigenvalue is isolated, even when this is not simple.
\end{oss}

\begin{open}
If $\Omega$ is connected, there exists only a finite number of $q-$eigenvalues with constant sign eigenfunctions. 
\end{open}

\begin{open}
Lin's Theorem \ref{teo:lin} is valid for open bounded convex sets in any dimension $N\ge 2$.
\end{open}
\appendix

\section{Spectrum of the linearized operator}

The next result can be found in Lin's paper \cite{Lin}, see Lemma 1 there.
\begin{lm}
\label{lm:pesante}
Let $2<q<2^*$ and let $\Omega\subset\mathbb{R}^N$ be a $q-$admissible open set. Let $U\in \mathcal{D}^{1,2}_0(\Omega)$ be a first positive $q-$eigenfunction, with unit $L^q$ norm. We consider the spectrum $\{\mu_1,\mu_2,\dots\}$ of the linearized operator
\begin{equation}
\label{linearized}
\varphi\mapsto -\Delta \varphi-(q-1)\,\lambda_{1}(\Omega;q)\, U^{q-2}\,\varphi,
\end{equation}
with homogeneous Dirichlet boundary conditions on $\partial\Omega$. Then
\[
\mu_1<0\le \mu_2\le \mu_3\le \dots.
\]
\end{lm}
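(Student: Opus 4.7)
The plan is to apply the standard spectral theory for a linear Schr\"odinger-type operator with a bounded potential, and then to obtain the sign information on $\mu_1$ and $\mu_2$ from, respectively, a direct evaluation of the quadratic form on $U$ and the second-order necessary condition at the constrained minimum $U$. First I would set the framework: for $2<q<2^*$, the $q$-admissibility of $\Omega$ is equivalent to $2$-admissibility by Remark \ref{oss:qadmissible}, hence $\mathcal{D}^{1,2}_0(\Omega)\hookrightarrow L^2(\Omega)$ compactly, and combining this with $U\in L^\infty(\Omega)$ from Proposition \ref{prop:Linfty} we see that the potential $V:=(q-1)\,\lambda_1(\Omega;q)\,U^{q-2}$ lies in $L^\infty(\Omega)$. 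Thus \eqref{linearized} defines a self-adjoint operator with compact resolvent on $L^2(\Omega)$, whose eigenvalues $\mu_1\le\mu_2\le\cdots\to+\infty$ enjoy the min-max characterization
\[
\mu_k=\inf_{E_k}\sup_{\varphi\in E_k\setminus\{0\}}\frac{Q(\varphi)}{\|\varphi\|_{L^2(\Omega)}^2},\qquad Q(\varphi):=\int_\Omega |\nabla\varphi|^2\,dx-\int_\Omega V\,\varphi^2\,dx,
\]
with $E_k$ ranging over $k$-dimensional subspaces of $\mathcal{D}^{1,2}_0(\Omega)$.

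For $\mu_1<0$, I would simply test $Q$ with $U$ itself. Using $\|U\|_{L^q(\Omega)}=1$, the weak formulation of \eqref{autosalonebis} yields $\int_\Omega |\nabla U|^2\,dx=\lambda_1(\Omega;q)$, and therefore
\[
Q(U)=\lambda_1(\Omega;q)-(q-1)\,\lambda_1(\Omega;q)\,\int_\Omega U^q\,dx=-(q-2)\,\lambda_1(\Omega;q)<0,
\]
so $\mu_1\le Q(U)/\|U\|_{L^2(\Omega)}^2<0$.

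The substantial part is $\mu_2\ge 0$. I would obtain this from the second-order necessary condition for $U$ as a minimizer of the Dirichlet integral on $\mathcal{S}_q(\Omega)$, applied along paths of the form
\[
\gamma(t)=\frac{U+t\,\varphi}{\|U+t\,\varphi\|_{L^q(\Omega)}}\in\mathcal{S}_q(\Omega),
\]
with $\varphi\in\mathcal{D}^{1,2}_0(\Omega)$ satisfying the orthogonality condition $\int_\Omega U^{q-1}\,\varphi\,dx=0$. Since $U\in L^\infty(\Omega)$, $t\mapsto\gamma(t)$ is $C^2$ near $t=0$ in $\mathcal{D}^{1,2}_0(\Omega)$, and the chosen normalization gives $\gamma(0)=U$ and $\gamma'(0)=\varphi$. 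Differentiating the identity $\int_\Omega |\gamma(t)|^q\,dx\equiv 1$ twice at $t=0$ produces
\[
\int_\Omega U^{q-1}\,\gamma''(0)\,dx=-(q-1)\int_\Omega U^{q-2}\,\varphi^2\,dx.
\]
The function $t\mapsto\int_\Omega|\nabla\gamma(t)|^2\,dx$ attains its minimum at $t=0$; computing its second derivative and using both the equation tested against $\gamma''(0)$ and the displayed identity, one finds
\[
0\le\frac{d^2}{dt^2}\bigg|_{t=0}\int_\Omega |\nabla\gamma(t)|^2\,dx=2\int_\Omega|\nabla\varphi|^2\,dx-2\,(q-1)\,\lambda_1(\Omega;q)\int_\Omega U^{q-2}\,\varphi^2\,dx=2\,Q(\varphi).
\]
Thus $Q\ge 0$ on the hyperplane $H:=\{\varphi\in\mathcal{D}^{1,2}_0(\Omega)\,:\,\int_\Omega U^{q-1}\,\varphi\,dx=0\}$.

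To conclude, I would argue by contradiction: if $\mu_2<0$, the min-max formula furnishes a two-dimensional subspace $E\subset\mathcal{D}^{1,2}_0(\Omega)$ on which $Q$ is strictly negative (away from $0$). The linear functional $\varphi\mapsto \int_\Omega U^{q-1}\,\varphi\,dx$ has kernel of dimension at least $\dim E-1=1$ on $E$, so we can pick a nonzero $\varphi_0\in E\cap H$. Then $Q(\varphi_0)<0$ contradicts $Q\ge 0$ on $H$, and hence $\mu_2\ge 0$. The only delicate point is the $C^2$-regularity of the curve $\gamma(t)$ and the validity of differentiating the $L^q$-norm twice under the integral sign; both follow routinely from $U\in L^\infty(\Omega)$ and $q>2$, so I would only sketch them.
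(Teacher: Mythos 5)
Your proof is correct and follows essentially the same strategy as the paper: establish the spectral framework via compactness of the resolvent and boundedness of the potential, test the quadratic form with $U$ to get $\mu_1<0$, and use the second-order necessary condition for the constrained minimum together with the min-max principle and a linear-algebra codimension argument to get $\mu_2\ge 0$. The only cosmetic differences are that the paper differentiates the full Rayleigh quotient $f(t)$ to obtain the unrestricted inequality \eqref{egolo} (with the extra $q\,\lambda_1(\Omega;q)\left(\int_\Omega U^{q-1}\varphi\,dx\right)^2$ term) and then kills that term by choosing $\varphi_{\mathcal F}$ in a $2$-dimensional subspace, whereas you parametrize a normalized path $\gamma(t)$ on $\mathcal S_q(\Omega)$ and impose $\int_\Omega U^{q-1}\varphi\,dx=0$ from the outset, closing with a contradiction instead of a direct estimate.
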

\begin{proof}
By Proposition \ref{prop:Linfty}, the potential $U^{q-2}$ is bounded. Moreover, the embedding $\mathcal{D}^{1,2}_0(\Omega)\hookrightarrow L^2(\Omega)$ is compact by assumption (recall Remark \ref{oss:qadmissible}). This implies that the 
resolvent operator\footnote{We recall that this is the operator $\mathcal{R}:L^2(\Omega)\to L^2(\Omega)$ such that $\mathcal{R}(\varphi)$ is the unique solution in $\mathcal{D}^{1,2}_0(\Omega)$ of $-\Delta u+V\,u=\varphi$.} of 
\[
\varphi\mapsto -\Delta \varphi+V\,\varphi,\qquad \mbox{ with } V=(q-1)\,\lambda_{1}(\Omega;q)\, (\|U\|_{L^\infty(\Omega)}^{q-2}-U^{q-2})\ge0,
\]
is compact, positive and self-adjoint. By applying the Spectral Theorem,
we grant the existence of an infinite sequence of eigenvalues diverging to $+\infty$ for the last operator. We call them $0<\eta_1\le \eta_2\le \dots\le \eta_k\le \dots\nearrow +\infty$ and notice that for them we still have the Courant-Fischer-Weyl min-max principle.
\par
If we now set 
\[
\mu_k=\eta_k-(q-1)\,\lambda_1(\Omega;q)\,\|U\|_{L^\infty(\Omega)},\qquad \mbox{ for every } k\in\mathbb{N},
\]
we get the spectrum of \eqref{linearized}.
It is not difficult to see that the first eigenvalue
\[
\mu_1:=\inf_{\varphi\in \mathcal{D}^{1,2}_0(\Omega)} \left\{\int_\Omega |\nabla \varphi|^2\,dx-(q-1)\,\lambda_{1}(\Omega;q)\, \int_\Omega U^{q-2}\,|\varphi|^2\,dx\, :\, \int_\Omega |\varphi|^2\,dx=1\right\},
\] 
is strictly negative. It is sufficient to use the test function $\varphi=U/\|U\|_{L^2(\Omega)}$, so to get
\[
\mu_1\le \frac{\displaystyle\int_\Omega |\nabla U|^2\,dx-(q-1)\,\lambda_1(\Omega;q)\,\int_\Omega |U|^q\,dx}{\displaystyle\int_\Omega |U|^2\,dx}=\frac{\displaystyle (2-q)\,\lambda_1(\Omega;q)}{\displaystyle\int_\Omega |U|^2\,dx}<0.
\]
For the second eigenvalue $\mu_2$, we first observe that the minimality of $U$ entails that the function
\[
f(t)=\frac{\displaystyle\int_\Omega |\nabla U+t\,\nabla\varphi|^2\,dx}{\displaystyle\left(\int_\Omega |U+t\,\varphi|^q\,dx\right)^\frac{2}{q}}
\]
is minimal at $t=0$, for every $\varphi\in \mathcal{D}^{1,2}_0(\Omega)$. We thus must have $f''(t)\ge 0$, which implies after a routine computation
\begin{equation}
\label{egolo}
\int_\Omega |\nabla \varphi|^2-(q-1)\,\lambda_1(\Omega;q)\,\int_\Omega U^{q-2}\,|\varphi|^2\,dx+q\,\lambda_1(\Omega;q)\,\left(\int_\Omega U^{q-1}\,\varphi\,dx\right)^2\ge 0.
\end{equation}
We now take $\mathcal{F}\subset \mathcal{D}^{1,2}_0(\Omega)$ a vector subspace with dimension $m\ge 2$, we claim that 
\begin{equation}
\label{maduro}
\mbox{there exists $\varphi_{\mathcal{F}}\in \mathcal{F}\setminus\{0\}$ such that } \int_\Omega U^{q-1}\,\varphi_{\mathcal{F}}\,dx=0.
\end{equation}
Indeed, let us take two linearly independent functions $\varphi_1,\varphi_2\in\mathcal{F}$. If one of these two functions has property \eqref{maduro} we are done. Otherwise, it results
\[
\int_\Omega U^{q-1}\,\varphi_1\,dx=\alpha\not =0\qquad \mbox{ and }\qquad \int_\Omega U^{q-1}\,\varphi_2\,dx=\beta\not =0.
\]
By defining $\varphi=\beta\,\varphi_1-\alpha\,\varphi_2$, we would get that $\varphi\in\mathcal{F}\setminus\{0\}$ has property \eqref{maduro}.
We can exploit this fact and the Courant-Fischer-Weyl min-max principle, to get
\[
\begin{split}
\mu_2&=\min_{\mathcal{F}}\left\{\max_{\varphi\in\mathcal{F}} \frac{\displaystyle\int_\Omega |\nabla \varphi|^2\,dx-(q-1)\,\lambda_1(\Omega;q)\,\int_\Omega U^{q-2}\,|\varphi|^2\,dx}{\displaystyle\int_\Omega |\varphi|^2\,dx}\right\}\\
&\ge \min_{\mathcal{F}}\frac{\displaystyle\int_\Omega |\nabla \varphi_{\mathcal{F}}|^2\,dx-(q-1)\,\lambda_1(\Omega;q)\,\int_\Omega U^{q-2}\,|\varphi_{\mathcal{F}}|^2\,dx}{\displaystyle\int_\Omega |\varphi_{\mathcal{F}}|^2\,dx}\ge 0,
\end{split}
\] 
thanks to \eqref{egolo} and \eqref{maduro}.
\end{proof}

\begin{oss}
The previous result can also be rephrased by saying that for $2<q<2^*$, a first $q-$eigenfunction has always {\it Morse index} equal to $1$, see for example \cite[Section 2]{BL}.
\end{oss}

\section{Critical set of a first $q-$eigenfunction}

The following simple result is useful in order to give a rough estimate on the critical set of a first $q-$eigenfunction. The result should be quite well-known, but we have not been able to trace it back in the literature. We thus give a proof.
\begin{lm}
\label{lm:criticalset}
Let $2<q<2^*$ and let $\Omega\subset\mathbb{R}^N$ be a $q-$admissible open set. Let $U\in\mathcal{D}^{1,2}_0(\Omega)$ be a positive first $q-$eigenfunction.
Then for every $-1<\alpha<0$ we have
\[
|U_{x_j}|^\frac{\alpha+2}{2}\in W^{1,2}_{\rm loc}(\Omega),\qquad j=1,\dots,N.
\]
Moreover, for every $0<\beta<1/2$ we also have
\[
\frac{1}{|\nabla U|^\beta}\in L^1_{\rm loc}(\Omega).
\]
\end{lm}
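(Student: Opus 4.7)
The strategy is to differentiate the Lane-Emden equation to obtain linear equations for the partial derivatives $v_j=U_{x_j}$, and then to test these equations against suitably regularized singular weights. First I would observe that by Proposition \ref{prop:Linfty} we have $U\in L^\infty(\Omega)$, so the right-hand side $\lambda_1(\Omega;q)\,U^{q-1}$ of the Lane-Emden equation is bounded. Classical elliptic regularity yields $U\in C^{2,\alpha}_{\rm loc}(\Omega)$, and since the map $t\mapsto t^{q-1}$ is of class $C^1$ for $q>2$, a bootstrap upgrades this to $U\in C^{3,\alpha}_{\rm loc}(\Omega)$. In particular, each $v_j=U_{x_j}$ is a classical solution of
\[
-\Delta v_j=(q-1)\,\lambda_1(\Omega;q)\,U^{q-2}\,v_j\qquad \mbox{in }\Omega.
\]

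For the first assertion, I would fix $-1<\alpha<0$ and $\eta\in C^\infty_0(\Omega)$, and plug into the above equation the test function
\[
\varphi_\epsilon=\eta^2\,v_j\,(v_j^2+\epsilon)^{\alpha/2}\in \mathcal{D}^{1,2}_0(\Omega),\qquad \epsilon>0.
\]
Expanding $\nabla\varphi_\epsilon$, the two intrinsic terms combine pointwise as
\[
\eta^2\,|\nabla v_j|^2\,\Big[(v_j^2+\epsilon)^{\alpha/2}+\alpha\,v_j^2\,(v_j^2+\epsilon)^{\alpha/2-1}\Big]\ge (1+\alpha)\,\eta^2\,|\nabla v_j|^2\,(v_j^2+\epsilon)^{\alpha/2},
\]
where one uses $v_j^2=(v_j^2+\epsilon)-\epsilon$ together with $\alpha<0$. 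The cross term coming from $\nabla\eta^2$ is absorbed via Young's inequality, exploiting the strict positivity of $1+\alpha$. The right-hand side is bounded uniformly in $\epsilon$ by the local boundedness of $U$ and $v_j$ guaranteed by the regularity above. Letting $\epsilon\to 0^+$ and invoking Fatou's lemma yields
\[
\int_\Omega \eta^2\,|v_j|^\alpha\,|\nabla v_j|^2\,dx<\infty,
\]
and a standard approximation by $g_\delta=(v_j^2+\delta)^{(\alpha+2)/4}$ identifies this integral, up to the factor $(\alpha+2)^2/4$, with $\int \eta^2\,|\nabla(|v_j|^{(\alpha+2)/2})|^2\,dx$. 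This is precisely the claimed $W^{1,2}_{\rm loc}$ property.

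For the second assertion, set $W=|\nabla U|^2$. Since $U\in C^{3}_{\rm loc}$, the Bochner identity
\[
-\Delta W+2\,|\nabla^2 U|^2=2\,(q-1)\,\lambda_1(\Omega;q)\,U^{q-2}\,W
\]
holds pointwise in $\Omega$. Fix $0<\gamma<1/2$ and test against $\eta^2\,(W+\epsilon)^{-\gamma}$. The crucial pointwise inequality $|\nabla W|^2=4\,|\nabla^2 U\cdot\nabla U|^2\le 4\,W\,|\nabla^2 U|^2$ (Cauchy-Schwarz) lets one estimate the singular term
\[
\gamma\int \eta^2\,(W+\epsilon)^{-\gamma-1}\,|\nabla W|^2\,dx \le 4\gamma\int\eta^2\,(W+\epsilon)^{-\gamma}\,|\nabla^2 U|^2\,dx,
\]
so that it can be absorbed into the ``good'' term $2\int \eta^2\,(W+\epsilon)^{-\gamma}\,|\nabla^2 U|^2\,dx$ exactly when $\gamma<1/2$; the cross term $\int \eta\,|\nabla\eta|\,|\nabla W|\,(W+\epsilon)^{-\gamma}\,dx$ is handled by Young's inequality. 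This produces a uniform bound
\[
\int_\Omega \eta^2\,(W+\epsilon)^{-\gamma}\,|\nabla^2 U|^2\,dx\le C.
\]
Combining with the elementary lower bound $|\nabla^2 U|^2\ge N^{-1}\,|\Delta U|^2=N^{-1}\,\lambda_1(\Omega;q)^2\,U^{2(q-1)}$ and the strong minimum principle $\inf_K U\ge c_K>0$ on any $K\Subset\Omega$, one obtains $\int_K (W+\epsilon)^{-\gamma}\,dx\le C_K$ uniformly in $\epsilon$. Fatou's lemma then gives $|\nabla U|^{-2\gamma}\in L^1_{\rm loc}(\Omega)$; setting $\beta=2\gamma$ covers any $0<\beta<1$, which is stronger than what the Lemma claims.

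The main technical obstacle is the absorption step in the Bochner argument, where the competition between the singular term $(W+\epsilon)^{-\gamma-1}\,|\nabla W|^2$ and the good term $(W+\epsilon)^{-\gamma}\,|\nabla^2 U|^2$ exactly pins down the threshold $\gamma<1/2$; the analogous threshold in the first part, namely $\alpha>-1$, is precisely the standing hypothesis. A secondary point is to verify that the $C^{3,\alpha}_{\rm loc}$ regularity required for the pointwise Bochner identity really follows from the bootstrap, and to make sure the approximation by $g_\delta$ in the first part commutes with weak gradients — both are routine but worth double-checking.
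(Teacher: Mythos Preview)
Your argument for the first assertion follows the same route as the paper: differentiate the equation, test with $\eta^2\,v_j\,(v_j^2+\varepsilon)^{\alpha/2}$, and absorb using $\alpha>-1$. The paper works at the $H^2_{\rm loc}$ level (via Nirenberg's difference quotients) rather than invoking $C^3$ regularity, and passes to the limit via the primitive $F_\varepsilon(t)=\int_0^t(\varepsilon+\tau^2)^{\alpha/4}\,d\tau$ instead of your $g_\delta$, but these differences are cosmetic.

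For the second assertion, your approach is genuinely different. The paper does not use the Bochner identity; instead it tests the \emph{original} Lane--Emden equation for $U$ with $\eta\,(\varepsilon+|\nabla U|)^{-\beta}$. After expanding the gradient, the terms on the right-hand side involve $|\nabla U|^{1-\beta}$ and $\nabla\bigl(|\nabla U|^{1-\beta}\bigr)$, which the paper controls by invoking the first assertion with $\alpha=-2\beta$; this forces $1-\beta>1/2$, i.e.\ $\beta<1/2$. Your Bochner route is independent of the first part and, as you note, the absorption threshold $\gamma<1/2$ translates into $\beta=2\gamma<1$, so you obtain the stronger range $0<\beta<1$. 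The paper's argument is more elementary (no third derivatives, no pointwise Bochner formula) and ties the two assertions together; yours needs the extra regularity bootstrap but decouples the two statements and yields a sharper conclusion. One small remark: your bootstrap to $C^{3,\alpha}_{\rm loc}$ should be justified via the strict positivity of $U$ in the interior (strong minimum principle), so that $t\mapsto t^{q-1}$ is applied only where it is $C^\infty$; the $C^1$ regularity of $t\mapsto t^{q-1}$ at $t=0$ alone would not give $U^{q-1}\in C^{1,\alpha}$ when $2<q<3$.
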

\begin{proof}
We know that $U$ weakly solves
\[
-\Delta U=\lambda_1(\Omega;q)\,U^{q-1},\qquad \mbox{ in }\Omega.
\]
Since we have $U\in L^\infty(\Omega)$ by Proposition \ref{prop:Linfty}, the right-hand side is in particular in $L^2(\Omega)$. Thus, we get $U\in H^2_{\rm loc}(\Omega)$ by the classical Nirenberg's method of incremental quotients. By using a test function of the form $\varphi_{x_j}$, with $\varphi\in C^\infty_0(\Omega)$, and then integrating by parts, we can obtain 
\[
\int_\Omega \langle\nabla U_{x_j},\nabla \varphi\rangle\,dx=(q-1)\,\lambda_1(\Omega;q)\,\int_\Omega U^{q-2}\,U_{x_j}\,\varphi\,dx.
\]
By density, the same equation still holds if $\varphi\in W^{1,2}(\Omega)$, with compact support contained in $\Omega$. In particular, we can take\footnote{Observe that the function $f_\varepsilon(t)=t\,(\varepsilon+t^2)^\frac{\alpha}{2}$ is $C^1$ and has bounded derivative. Thus $f_\varepsilon(U_{x_j})\in W^{1,2}_{\rm loc}(\Omega)$ and the test function is admissible.}
\[
\varphi=\eta^2\,(\varepsilon+|U_{x_j}|^2)^\frac{\alpha}{2}\,U_{x_j},
\]
where $\varepsilon>0$ and $-1<\alpha<0$. Here $\eta\in C^\infty_0(B_R)$ is a standard nonnegative cut-off function, with $B_R\Subset \Omega$ and $\eta\equiv 1$ on $B_r\subset B_R$.
We thus obtain
\[
\begin{split}
\int_\Omega |\nabla U_{x_j}|^2\,(\varepsilon+|U_{x_j}|^2)^\frac{\alpha}{2}\,\eta^2\,dx&+\alpha\,\int_\Omega |\nabla U_{x_j}|^2\,(\varepsilon+|U_{x_j}|^2)^\frac{\alpha-2}{2}\,|U_{x_j}|^2\,\eta^2\,dx\\
&\le2\, \int_\Omega |\nabla U_{x_j}|\,|\nabla \eta|\,\eta\,|U_{x_j}|\,(\varepsilon+|U_{x_j}|^2)^\frac{\alpha}{2}\,dx\\
&+(q-1)\,\lambda_1(\Omega;q)\,\int_\Omega U^{q-2}\,|U_{x_j}|^2\,(\varepsilon+|U_{x_j}|^2)^\frac{\alpha}{2}\,\eta^2\,dx.
\end{split}
\]
By recalling that $\alpha<0$, we get
\[
\alpha\,\int_\Omega |\nabla U_{x_j}|^2\,(\varepsilon+|U_{x_j}|^2)^\frac{\alpha-2}{2}\,|U_{x_j}|^2\,\eta^2\,dx\ge \alpha\,\int_\Omega |\nabla U_{x_j}|^2\,(\varepsilon+|U_{x_j}|^2)^\frac{\alpha}{2}\,\eta^2\,dx.
\]
By further using Young's inequality, the $L^\infty$ estimate on $U$ and the properties of $\eta$, we get
\[
\begin{split}
(1+\alpha)\,\int_\Omega |\nabla U_{x_j}|^2\,(\varepsilon+|U_{x_j}|^2)^\frac{\alpha}{2}\,\eta^2\,dx&\le \frac{1}{\tau}\,\int_\Omega |\nabla \eta|^2\,|U_{x_j}|^2\,(\varepsilon+|U_{x_j}|^2)^\frac{\alpha}{2}\,dx\\
&+ \tau\,\int_\Omega |\nabla U_{x_j}|^2\,(\varepsilon+|U_{x_j}|^2)^\frac{\alpha}{2}\,\eta^2\,dx\\
&+(q-1)\,\lambda_1(\Omega;q)\,\|U\|^{q-2}_{L^\infty(\Omega)}\, \int_{B_R} (\varepsilon+|U_{x_j}|^2)^\frac{\alpha+2}{2}\,dx.
\end{split}
\]
By observing that $1+\alpha>0$, we can take $\tau=(1+\alpha)/2$ and absorb the term with the Hessian of $U$ on the right-hand side. This gives
\[
\begin{split}
\int_{B_r} |\nabla U_{x_j}|^2\,(\varepsilon+|U_{x_j}|^2)^\frac{\alpha}{2}\,dx&\le \left(\frac{2}{1+\alpha}\right)^2\,\frac{1}{(R-r)^2}\,\int_{B_R} (\varepsilon+|U_{x_j}|^2)^\frac{\alpha}{2}\,dx\\
&+\frac{2\,(q-1)}{1+\alpha}\,\lambda_1(\Omega;q)\,\|U\|^{q-2}_{L^\infty(\Omega)}\, \int_{B_R} (\varepsilon+|U_{x_j}|^2)^\frac{\alpha+2}{2}\,dx.
\end{split}
\]
By introducing the function 
\[
F_\varepsilon(t)=\int_0^t (\varepsilon+\tau^2)^\frac{\alpha}{4}\,d\tau,
\]
from the previous estimate we have that for $0<\varepsilon<1$
\[
\int_{B_r} |F_\varepsilon(U_{x_j})|^2\,dx+\int_{B_r} |\nabla F_\varepsilon(U_{x_j})|^2\,dx\le C_\alpha,
\]
with $C>0$ independent of $\varepsilon$. This shows that $F_\varepsilon(U_{x_j})$ converges to $F_0(U_{x_j})=|U_{x_j}|^{(\alpha+2)/2}$ weakly in $W^{1,2}(B_r)$, as $\varepsilon$ goes to $0$. Thus in particular, we have
\[
|U_{x_j}|^\frac{\alpha+2}{2}\in W^{1,2}(B_r).
\]
Thanks to the arbitrariness of the ball $B_r$, we get the desired property of $U_{x_j}$.
\par
In order to prove that a negative power of $|\nabla U|$ is locally summable, we first observe that from the previous property, we also get
\begin{equation}
\label{sobolevity}
|\nabla U|^\frac{\alpha+2}{2}\in W^{1,2}_{\rm loc}(\Omega).
\end{equation}
Then we test the equation for $U$ with
\[
\varphi=\eta\,\frac{1}{(\varepsilon+|\nabla U|)^\beta},
\]
where $\eta$ is as before and $0<\beta<1/2$. We get
\[
\begin{split}
\lambda_1(\Omega;q)\,\int_{B_r} \frac{U^{q-1}}{(\varepsilon+|\nabla U|)^\beta}\,dx&\le \int_{B_R} |\nabla \eta|\,\frac{|\nabla U|}{(\varepsilon+|\nabla U|)^\beta}\,dx \\
&+\beta\,\int_{B_R} \frac{|\nabla U|}{(\varepsilon+|\nabla U|)^{\beta+1}}\,|\nabla |\nabla U||\,dx\\
&\le \frac{C}{R-r}\,\int_{B_R} |\nabla U|^{1-\beta}+\beta\,\int_{B_R} |\nabla U|^{-\beta}\,|\nabla |\nabla U||\,dx\\
&=\frac{1}{R-r}\,\int_{B_R} |\nabla U|^{1-\beta}+\frac{\beta}{1-\beta}\,\int_{B_R} \Big|\nabla |\nabla U|^{1-\beta}\Big|\,dx.
\end{split}
\]
We now observe that since $\beta<1/2$, then $1-\beta>1/2$ and thus the last integral is finite, thanks to \eqref{sobolevity} with $\alpha=-2\,\beta$. By taking the limit as $\varepsilon$ goes to $0$, this shows that
\[
\int_{B_r} \frac{U^{q-1}}{|\nabla U|^\beta}\,dx<+\infty.
\]
The claimed integrability of $|\nabla U|^{-\beta}$ now follows by observing that $U\ge c>0$ on $B_r$, by the minimum principle.
\end{proof}

\begin{oss}
The previous result permits to infer that for every $K\Subset \Omega$, the critical set $\{x\in K\, :\, |\nabla U(x)|=0\}$ has $N-$dimensional equal to $0$. This is quite a poor information, which is however enough in order to accomplish {\bf Step 1} in Theorem \ref{teo:simpleball} above.
\par
There is a vast literature on the problem of estimating the critical set for solutions of linear elliptic PDEs of the form
\[
\mathrm{div}(A(x)\,\nabla u)+\langle \mathbf{b}(x),\nabla u\rangle=0,
\]
see for example the by now classical reference \cite{HHON}.
We point out that a first positive $q-$eigenfunction $U$ can be regarded as a solution of the linear equation
\[
-\Delta U=c\,U, \quad \mbox{ in }\Omega,\qquad \mbox{ where } c(x)=\lambda_1(\Omega;q)\,U^{q-2}(x).
\]
However, this observation does not seem very useful, since well-known counter-examples show that for these equations an estimate of the critical set is not possible, see \cite[page 133]{Ma}.
\end{oss}


\begin{thebibliography}{100}

\bibitem{BL} A. Bahri, P.-L. Lions, Solutions of superlinear elliptic equations and their Morse indices, Comm. Pure Appl. Math., {\bf 45} (1992), 1205--1215.

\bibitem{BraBus} L. Brasco, On principal frequencies and isoperimetric ratios in convex sets, to appear on Ann. Fac. Sci. Toulouse Math., available at {\tt http://cvgmt.sns.it/paper/3891/}

\bibitem{BDF} L. Brasco, G. De Philippis, G. Franzina, in preparation.

\bibitem{BF} L. Brasco, G. Franzina, A pathological example in Nonlinear Spectral Theory,  Adv. Nonlinear Anal., {\bf 8} (2019), 707--714.

\bibitem{BFkodai} L. Brasco, G. Franzina, Convexity properties of Dirichlet integrals and Picone--type inequalities,  Kodai Math. J., {\bf 37} (2014), 769--799.

\bibitem{BPS} L. Brasco, E. Parini, M. Squassina, Stability of variational eigenvalues for the fractional $p-$Laplacian, Discrete Contin. Dyn. Syst., {\bf 36} (2016), 1813--1845.

\bibitem{braruf} L. Brasco, B. Ruffini, Compact Sobolev embeddings and torsion functions, Ann. Inst. H. Poincar\'e Anal. Non Lin\'eaire, {\bf 34} (2017), 817--843

\bibitem{BO} H. Brezis, L. Oswald, Remarks on sublinear elliptic equations, Nonlinear Anal., {\bf 10} (1986), 55--64.

\bibitem{BZ} J. E. Brothers, W. P. Ziemer, Minimal rearrangements of Sobolev functions, J. Reine Angew. Math., {\bf 384} (1988), 153--179.

\bibitem{CH} R. Courant, D. Hilbert, {\it Methods of mathematical physics. Vol. I.} 
 Wiley Classics Library. A Wiley-Interscience Publication. John Wiley \& Sons, Inc., New York, 1989 {\it Methods of Mathematical Physics}, 

\bibitem{DGP} L. Damascelli, M. Grossi, F. Pacella, Qualitative properties of positive solutions of semilinear elliptic equations in symmetric domains via the maximum principle, Ann. Inst. H. Poincar\'e Anal. Non Lin\'eaire, {\bf 16} (1999), 631--652.

\bibitem{Da} E. N. Dancer, On the influence of domain shape on the existence of large solutions of some superlinear problems, Math. Ann., {\bf 285} (1989), 647--669. 

\bibitem{Da1988} E. N. Dancer, The effect of domain shape on the number of positive solutions of certain nonlinear equations, J. Diff. Eq., {\bf 74} (1988), 120--156. 

\bibitem{DM} P. Dr\'abek, R. Man\'asevich, On the closed solution to some nonhomogeneous eigenvalue problems with $p-$Laplacian, 
Differential Integral Equations, {\bf 12} (1999), 773--788.

\bibitem{Er} G. Ercole, Sign-definiteness of $q-$eigenfunctions for a super-linear $p-$Laplacian eigenvalue problem, Arch. Math., {\bf 103} (2014), 189--194. 

\bibitem{EG} L. C. Evans, R. Gariepy, {\it Measure theory and fine properties of functions}. Studies in Advanced Mathematics. CRC Press, Boca Raton, FL, 1992.

\bibitem{FL} G. Franzina, P. D. Lamberti, Existence and uniqueness for a $p-$Laplacian nonlinear eigenvalue problem, Electron. J. Differential Equations, {\bf 26} (2010), pp. 1-10. 

\bibitem{GNN} B. Gidas, W. M. Ni, L. Nirenberg, Symmetry and related properties via the maximum principle, Comm. Math. Phys., {\bf 68} (1979), 209--243.

\bibitem{Gr} P. Grisvard, {\it Elliptic Problems in Nonsmooth Domains},  Monographs and Studies in Mathematics, {\bf 24}. Pitman (Advanced Publishing Program), Boston, MA, 1985.

\bibitem{HHON} R. Hardt, M. Hoffmann-Ostenhof, T. Hoffmann-Ostenhof,  N. Nadirashvili, Critical sets of solutions to elliptic equations, 
J. Differential Geom., {\bf 51} (1999), 359--373.

\bibitem{He} A. Henrot, {\it Extremum problems for eigenvalues of elliptic operators}. Frontiers in Mathematics. Birkhauser Verlag, Basel, 2006.

\bibitem{Ka} B. Kawohl, Symmetry results for functions yielding best constants in Sobolev-type inequalities, Discrete Contin. Dynam. Systems, {\bf 6} (2000), 683--690.

\bibitem{KL} M. K. Kwong, Y. Li, Uniqueness of radial solutions of semilinear elliptic equations, Trans. Amer. Math. Soc., {\bf 333} (1992), 339--363. 

\bibitem{Lin} C.-S. Lin, Uniqueness of least energy solutions to a semilinear elliptic equation in $\mathbb{R}^2$, 
Manuscripta Math., {\bf 84} (1994), 13--19.

\bibitem{Ma} R. Magnanini, An introduction to the study of critical points of solutions of elliptic and parabolic equations, Rend. Istit. Mat. Univ. Trieste, {\bf 48} (2016), 121--166.

\bibitem{maz} V. Maz'ya, Sobolev spaces, Sobolev spaces with applications to elliptic partial differential equations. Second, revised and augmented edition. Grundlehren der Mathematischen Wissenschaften [Fundamental Principles of Mathematical Sciences], {\bf 342}. Springer, Heidelberg, 2011. 

\bibitem{Mu} C. M\"uller, On the behavior of the solutions of the differential equation $\Delta U=F(x,U)$ in the neighborhood of a point,
Comm. Pure Appl. Math., {\bf 7} (1954), 505--515.

\bibitem{Na} A. I. Nazarov, The one-dimensional character of an extremum point of the Friedrichs inequality in spherical and plane layers, J. Math. Sci., {\bf 102} (2000), 4473--4486.

\bibitem{Ot} M. \^Otani, On certain second order ordinary differential equations associated with Sobolev-Poincar\'e--type inequalities,  Nonlinear Anal., {\bf 8} (1984), 1255--1270. 

\bibitem{St} M. Struwe, {\it Variational methods. Applications to nonlinear partial differential equations and Hamiltonian systems. Fourth edition}. Ergebnisse der Mathematik und ihrer Grenzgebiete. 3. Folge. A Series of Modern Surveys in Mathematics, {\bf 34}. Springer-Verlag, Berlin, 2008.


\end{thebibliography}
\end{document}